\newtheorem{thm}{Theorem}[subsection]
\newtheorem{lem}[thm]{Lemma}
\newtheorem{prop}[thm]{Proposition}
\newtheorem{cor}[thm]{Corollary}
\newtheorem*{prob}{\bf Problem}
\theoremstyle{definition}\newtheorem{df}[thm]{Definition}
\theoremstyle{definition}\newtheorem{rem}[thm]{Remark}
\newtheorem{exm}[thm]{\it Example}
\newtheorem{qn}{\bf Question}
\theoremstyle{definition}
\renewcommand{\phi}{\varphi}
\newcommand{\N}{\mathbb{N}}
\newcommand{\Homeo}{\mathcal{H}}
\newcommand{\A}{\mathcal{A}}
\newcommand{\Z}{\mathbb{Z}}
\newcommand{\J}{\mathcal{J}}
\newcommand{\tildeX}{\widetilde{X}}
\newcommand{\X}{\underline{X}}
\newcommand{\KX}{{}_{k_2}X_{l_2}}
\newcommand{\kX}{{}_{k_1}X_{l_1}}
\newcommand{\Kx}{{}_{k_2}[x]_{l_2}}
\newcommand{\kx}{{}_{k_1}[x]_{l_1}}
\newcommand{\jf}{\mathfrak{j}}
\newcommand{\I}{\mathcal{I}}
\newcommand{\D}{\mathcal{D}}
\newcommand{\Sph}{\mathbb{S}}
\newcommand{\La}{\mathcal{L}}
\newcommand{\R}{\mathbb{R}}
\newcommand{\Raf}{\mathcal{R}_0(\tilde{\alpha})}
\newcommand{\Rbt}{\mathcal{R}_0(\tilde{\beta})}
\newcommand{\C}{\mathbb{C}}
\newcommand{\T}{\mathbb{T}}
\newcommand{\taf}{\tilde{\alpha}}
\newcommand{\tbt}{\tilde{\beta}}
\newcommand{\Aff}{\operatorname{Aff}}
\newcommand{\Pj}{\mathcal{P}}
\newcommand{\id}{\operatorname{id}}
\newcommand{\ep}{\varepsilon}
\newcommand{\F}{{\mathcal F}}
\newcommand{\Lip}{\mathcal{L}}
\newcommand{\G}{\mathcal{G}}
\newcommand{\af}{{\alpha}}
\newcommand{\bt}{{\beta}}
\newcommand{\beq}{\begin{eqnarray}}
\newcommand{\eneq}{\end{eqnarray}}
\begin{document}
\title[Dim. associated with surjective local homeo. and subshifts with low complexity]{Dimensions associated with surjective local homeomorphisms and subshifts with low complexity}

\author{Zhuofeng He, Sihan Wei}

\maketitle

\begin{abstract} 
We prove that the Cuntz-Pimsner algebra associated to any surjective aperiodic one-sided subshift with finitely many left special elements has finite nuclear dimension, which is especially the case for every surjective aperiodic subshift with nonsuperlinear-growth complexity.

As a generalization, we define the notions of left speical set, the topological Rokhlin dimension, the tower dimension and the amenability dimension for every local homeomorphism. Then we turn to prove that, for every surjective local homeomorphism with a finite left special set consisting of isolated points, these dimensions along with the dynamic asymptotic dimension are all finite.

\quad\par

\noindent{\bf Keywords}. Topological Rokhlin dimension $\cdot$ Tower dimension $\cdot$ Amenability dimension $\cdot$ Left special elements $\cdot$ Aperiodic subshifts
\quad\par
\quad\par
\noindent{\bf Mathematics Subject Classification (2020)} Primary 46L05, 37B05 
\end{abstract}

\section{introduction}\label{sec:1}
For every one-sided subshift $X$ over a finite alphabet $\mathcal{A}$, a {\it left special element} is a point $x\in X$ such that there are at least two distinct letters $\af,\bt\in\mathcal{A}$ with $\af x,\bt x\in X$. In \cite{HW}, the authors show that for every (nontrivial) minimal one-sided subshift with finitely many left special elements, the nuclear dimension of its Cuntz-Pimsner  $C^*$-algebra is finite and in particular, is always 1. Recall that for any subshift $X$, its Cuntz-Pimsner algebra is defined by K. Brix and T. Carlsen in \cite{BC} to be the full groupoid $C^*$-algebra $C^*(\mathcal{G}_{\tilde{X}})$, where $\mathcal{G}_{\tildeX}$ is the topological groupoid associated with $\tildeX$, the so-called “cover” of $X$, which is a zero-dimensional compact metric space with a surjective local homeomorphism on it. Our approach was, generalizing K. Brix's method in \cite{B}, describing the cover system $\tildeX$, showing that it is the union of a minimal two-sided subshift and finitely many discrete orbits. 

On the other hand, passing from the one-sided subshift to the classification program of $C^*$-algebras raised and developed by G. Elliott and his countless collaborators, several notions have been defined during the great efforts dealing with crossed products induced by group actions on compact metric spaces, especially those dimensional ones. These notions include {\it tower dimension} and {\it fine tower dimension}  of D. Kerr and G. Szab$\acute{\rm o}$ in \cite{KS}, {\it dynamic asymptotic dimension} and {\it amenability dimension} of E. Guentner, R. Willett and G. Yu in \cite{GWY}, and {\it topological Rokhlin dimension} of $\mathbb{Z}^k$-actions defined by G. Szab$\acute{\rm o}$ in \cite{Sza}, to name a few.

Combining these two facets, we have the following two questions, which are also the motivations of the article.

\begin{qn}
What if $X$ is an aperiodic one-sided subshift rather than a minimal one? Does the Cuntz-Pimsner algebra $\mathcal{O}_X$ still have finite nuclear dimension, given that the number of its left-special elements is finite?
\end{qn}

\begin{qn}
Can we define those aforementioned dimensional notions, in particular, the topological Rokhlin dimension, amenability dimension and tower dimension for surjective local homeomorphisms on compact metric spaces?
\end{qn}

As for Question 1, we observe in subsection 3.1 that, if $X$ is an aperiodic one-sided subshift with finitely many left special elements, then the number of the points in $(\tildeX,\sigma_{\tildeX})$ having at least two preimages under $\sigma_{\tildeX}$ is exactly equal to the number of left special elements in $X$, see Theorem \ref{shift}. Besides, each of these points is isolated in $\tildeX$. This enlightens us to define the notion of {\it left special set} for a surjective local homeomorphism $\phi$ on a compact metric space $X$ to be the points in $X$ having at lease two preimages under $\phi$. Then with this observation, we conclude in Corollary \ref{cor332} that the Cuntz-Pimsner algebra of every aperiodic one-sided subshift with finitely many left special elements has finite nuclear dimension, and this is especially the case for every aperiodic subshift $X$ with nonsuperlinear-growth complexity, since we also show in Lemma \ref{useful} that every such subshift has finitely many left special elements.

Then for Question 2, we define the notion of the {\it topological Rokhlin dimension}(see subsection 2.9), the {\it tower dimension} (Definition \ref{td}) and the {\it amenability dimension} (Definition \ref{amd}) for a local homeomorphism. And with our notion of left special set $Sp_l(X,T)$ associated to every (unnecessarily invertible) dynamical system $(X,T)$, we show the following implications in Section \ref{sec:4} and Section \ref{sec:5}:

(i) Every aperiodic surjective local homeomorphism on a zero-dimensional compact metric space with $|Sp_l(X,T)|<\infty$ has finite topological Rokhlin dimension, with the inequality ${\rm dim}_{\rm Rok}(X,T)\le 2|Sp_l(X,T)|+1$(Theorem \ref{zdr});

(ii) Every local homeomorphism $T$ on a compact metric space $X$ satisfies the inequality ${\rm dim}_{\rm tow}(X,T)\le 2{\rm dim}_{\rm Rok}(X,T)+1$(Lemma \ref{td1});

(iii) Every surjective local homeomorphism $T$ on a compact metric space $X$ with $|Sp_l(X,T)|<\infty$ fits into the inequality ${\rm dim}_{\rm am}(X,T)\le {\rm dim}_{\rm tow}(X,T)$, whenever every element of $Sp_l(X,T)$ is an isolated point in $X$(Lemma \ref{fad});

(iv) Every surjective local homeomorphism $T$ on a compact metric space $X$ with $|Sp_l(X,T)|<\infty$ makes the inequality ${\rm dad}(\mathcal{G}_{(X,T)})\le{\rm dim}_{\rm am}(X,T)$ hold, whenever every element of $Sp_l(X,T)$ is an isolated point in $X$(Lemma \ref{fdad}).

As a result, for every aperiodic surjective local homeomorphism on a zero-dimensional compact metric space with finitely many left special elements, if every its left special element is an isolated point, then all of these dimensions are finite.

Finally, we shall also mention that, in the definition of these dimensional notions, we almost have to replace every image $T^n(Y)$ with the preimage $T^{-n}(Y)$, due to the simple fact of a local homeomorphism: $Y_1\cap Y_2=\varnothing$ does NOT imply that $T(Y_1)\cap T(Y_2)=\varnothing$!

We also ask the following question.
\begin{qn}
Do these dimensions, especially the topological Rokhlin dimension, really have anything to do with the number of left special elements?
\end{qn}

\subsection{Outline of the paper}
The paper is organized as follows. 

Section \ref{sec:2} will provide definitions, including basic notions of local homeomorphisms, aperiodicity and orbits. The definition of left special elements and the topological Rokhlin dimension for a local homeomorphism will be given as well. Moreover, we will also explain what does it mean by a subshift with nonsuperlinear-growth complexity, and show that every such aperiodic subshift has finitely many left special elements(Lemma \ref{useful}).

In Section \ref{sec:3}, we prove that every surjective aperiodic one-sided subshift with finitely many left special elements has finite nuclear dimension(Corollary \ref{cor332}). This then applies to any surjective aperiodic one-sided subshift with nonsuperlinear-growth complexity.

Section \ref{sec:4} is devoted to the finiteness of topological Rokhlin dimension for a surjective local homeomorphism on a zero-dimensional compact metric space with finitely many left special elements(Theorem \ref{zdr}). Several technical lemmas are given as preparations.

We finally define and consider in Section \ref{sec:5} the finiteness of tower dimension, amenability dimension and dynamic asymptotic dimension for a local homeomorphism on a compact metric space. The inequalities will be given in order, see Lemma \ref{td1}, Lemma \ref{fad} and Lemma \ref{fdad}.

\section{preliminaries}\label{sec:2}
Throughout the whole article, $\N$ is used to denote the set of all nonnegative integers. The compact metric spaces considered will all be nonempty.

Capital Letters $X,Y,\cdots$ are used to denote topological spaces, while small ones $x,y,\cdots$ for points in the underlying spaces.

For a set $A$, we will use the notation $|A|$ to stand for its cardinality. If $A$ is an infinite set, we will simply write $|A|=\infty$.

For a collection $\mathcal{S}$ of subsets of a set $X$, we will write $\bigcup\mathcal{S}$ and $\bigcap\mathcal{S}$ for the union and intersection of elements in $\mathcal{S}$ respectively.

For a metric space $X$, a point $x\in X$ and a positive number $\delta>0$, we write
\[B(x,\delta)=\{y\in X: d(y,x)<\delta\}.\]
If $E\subset X$ is a subset, then we also write
\[B_{\delta}(E)=\{y\in X: d(y,E)<\delta\},\]
where $d(y,E)$ is defined as the $\inf\{d(y,z): z\in E\}$.

We will use $\overline{E}$ and ${\rm int}(E)$ to denote the closure and interior of a subset $E$ of a topological space, respectively.

\subsection{Zero-dimensional spaces}
Let $X$ be a topological space. We say that $X$ is a {\it zero-dimensional space}, if for every $x\in X$ and every open set $x\in U\subset X$, there is a clopen subset $V\subset X$ with $x\in V\subset U$.

\subsection{Local homeomorphisms}
Let $X$ be a topological space, and $T: X\to X$ a continuous map. We say $T$ is a {\it local homeomorphism}, if for every $x\in X$, there is an open set $U$ containing $x$ such that the restriction
\[T|_U: U\to T(U)\]
is a homeomorphism. It is well known that

(A) Every local homeomorphism is an {\it open map}, that is, sending open sets to open sets;

(B) Every fibre of a local homeomorphism is a discrete subspace of its domain, where by a {\it fibre}, we mean  the inverse images $T^{-1}(\{y\})$ for $y\in X$;

(C) A map is a local homeomorphism if and only if it is continuous, open and {\it locally injective};

(D) The composition of two local homeomorphisms is a local homeomorphism as well.

Throughout the paper, we will use these facts once needed, without any further proof.

\subsection{Aperiodic local homeomorphisms and free dynamical systems}
Let $X$ be a compact metric space and $T: X\to X$ a local homeomorphism. If there are $n\in\N$ and $x\in X$ such that $T^n(x)=x$, we then say $x$ is a {\it periodic point} of $T$. The set of periodic points of $T$ in $X$ is denoted by $P(X,T)$.

If $P(X,T)=\varnothing$, we shall say $T$ is an {\it aperiodic local homeomorphism}, and $(X,T)$ a {\it free dynamical system}.

\subsection{The orbits of a dynamical system}
Let $X$ be a compact metric space and $T: X\to X$ a local homeomorphism. 

For every $x\in X$, we define its {\it forward orbit} and {\it backward orbit} to be
\[{\rm Orb}^+_T(x)=\{T^n(x): n\ge0\}\ \ {\rm and}\ \ {\rm Orb}^-_T(x)=\{y\in X: T^n(y)=x\ {\rm for\ some\ }n>0\}\]
respectively, and its {\it (whole) orbit}
\[{\rm Orb}_T(x)={\rm Orb}^+_T(x)\cup{\rm Orb}^-_T(x).\]

\subsection{The Lebesgue covering dimension and small inductive dimension}
Let $X$ be a topological space and $\mathcal{U}$ is a finite open cover of $X$. Define the {\it order} ${\rm ord}(\mathcal{U})$ of $\af$ by
\[{\rm ord}(\mathcal{U})=-1+\max_{x\in X}\sum_{O\in\mathcal{U}}1_O(x).\]
Let 
\[D(\mathcal{U})=\min_{\mathcal{U}'\ {\rm refines}\ \mathcal{U}}{\rm ord}(\bt),\]
where by $\mathcal{U}'$ {\it refines} $\mathcal{U}$, we mean that $\mathcal{U}'$ is a finite open cover satisfying that every element of $\mathcal{U}'$ is contained in some element of $\mathcal{U}$. The {\it Lebesgue covering dimension} is given by
\[{\rm dim}(X)=\sup_{\mathcal{U}}D(\mathcal{U}),\]
where $\mathcal{U}$ runs over all finite open covers of $X$. It is well known that every compact metric space of dimension $d$ embeds into $\R^{2d+1}$.

For the {\it small inductive dimension} ${\rm ind}(X)$, we start with defining
\[{\rm ind}(\varnothing)=-1.\]
Then inductively, the {\it small inductive dimension} ${\rm ind}(X)$ of $X$ is the smallest $n$ such that for every $x\in X$ and every open set $U$ containing $x$, there is an open set $V$ with $x\in V\subset\overline{V}\subset U$ and ${\rm ind}(\partial V)\le n-1$.

The following properties in dimension theory hold for every separable metric space $X$(and especially for every compact metric space). Readers may refer to \cite{Eng} for their proofs.

(D1) ${\rm ind}(X)={\rm dim}(X)$;

(D2) (The Countable Closed Sum Theory) For a sequence of closed subsets $\{B_i\}_{i\ge1}$ with ${\rm dim}(B_i)\le k$ for every $i\ge1$, we have ${\rm dim}(\bigcup_{i\ge1}B_i)\le k$;

(D3) Let $E$ be a zero-dimensional subset of $X$. Then for every $x\in X$, and every open set $U$ containing $x$, there is an open set $U'$ with $x\in U'\subset U$ and $E\cap\partial U'=\varnothing$;

(D4) If $X\ne\varnothing$, then there is a zero-dimensional set $E\subset X$ which is $F_\sigma$ in $X$ and such that ${\rm dim}(X\setminus E)={\rm dim}(X)-1$.

Since the underlying spaces throughout the paper will all be compact metric spaces(or their subspaces, and hence are separable metric spaces), according to (D1), we will hence not distinguish Lebesgue covering dimension and small inductive dimension.

%Now let $X$ be a compact metric space with ${\rm dim}(X)=d$. Let $U\subset X$ be any open set. It is clear that $(\partial U)^\circ=\varnothing$. On the other hand, from Theorem 1.8.10 in \cite{Eng}, which states that a set $M\subset\R^n$ has small inductive dimension $n$ if and only if $M^\circ\ne\varnothing$. Now since in separable metric spaces, the Lebesgue covering dimension is equal to the small inductive dimension, which combining with Theorem 1.8.10 in \cite{Eng} immediately follows that, for any open set $U\subset X$, ${\rm dim}(\partial U)={\rm ind}(\partial U)\le d-1$. Formally, we have the following.
%\begin{prop}\label{lm0}
%Let $X$ be a compact metric space with ${\rm dim}(X)=d$. Then for every open set $U\subset X$, ${\rm dim}(\partial U)\le d-1$.
%\end{prop}

\subsection{The general position}
Let $X$ be a compact metric space with finite dimension $d$. Let $B$ be a family of subsets of $X$. We say that $B$ is in {\it general position} if for every finite subfamily $S\subset B$ with $m$ elements, 
\[{\rm dim}(\cap S)\le \max\{-1, d-m\}.\]
Note that if ${\rm dim}(X)=0$, then $B$ is in general position if and only if 
\[B=\{\varnothing\}.\]
Readers who are interested in this property may refer to \cite{Kul} for details.

%\subsection{The controlled marker property}
%Let $(X,T)$ be a topological dynamical system where $T:X\to X$ is an aperiodic local homeomorphism. Let $N,L\in\N$ be positive integers, and $F\subset X$ an open set. We say $F$ is an {\it $L$-controlled $N$-marker} of $(X,T)$, if 

%(i) $\overline{F}\cap T^i(\overline{F})=\varnothing$ for $i=1,2,\cdots,N-1$;

%(ii) $X=\bigcup_{0\le l\le L-1}\bigcup_{0\le i\le N-1}T^{i+t_l}(F)$ for some $t_l\in\N, l\in\{0,1,\cdots,L-1\}$.

%Furthermore, if for every positive $N\in\N$, $(X,T)$ admits an $L$-controlled $N$-marker, then we say $(X,T)$ has the {\it $L$-controlled marker property}.

\subsection{Left special elements}
Let $X$ be a compact metric space, $x\in X$ and $T:X\to X$ a continuous map. 

We say that $x$ is a {\it left special element} of the topological dynamical system $(X,T)$, if
\[|T^{-1}(\{x\})|\ge2.\]
The set of left special elements of $(X,T)$ is denoted by $Sp_l(X,T)$. We shall call $Sp_l(X,T)$ the {\it left special set} of $(X,T)$.

\subsection{The Rokhlin covers of local homeomorphisms}
Let $(X,T)$ be a topological dynamical system, where $X$ is a compact metric space and $T:X\to X$ is a local homeomorphism. 

Let $N\in\N$. By an {\it $N$-Rokhlin tower} in of $(X,T)$, we mean the following collection of nonempty sets
\[\mathcal{T}(U_0,N)=\{U_0, U_1,\cdots, U_{N-1}\}\]
in $X$ such that 

(1) $U_{k}=T^{-1}(U_{k-1})$ for $k=1,2,\cdots,N-1$, and,

(2) $\overline{U_i}\cap\overline{U_j}=\varnothing$ for distinct $0\le i,j\le N-1$.

We shall say an $N$-Rokhlin tower is {\it open} if each of its members is open, and {\it clopen} if each of its members is clopen. 

An $N$-{\it Rokhlin cover} of $(X,T)$ is a {\it finite} collection of open $N$-Rokhlin towers
\[\mathcal{R}=\bigcup_{0\le l\le d}\mathcal{T}(U_0^{l}, N)\]
such that $X=\bigcup\mathcal{R}$, i.e., $\mathcal{R}$ forms an open cover of $X$.

\subsection{The topological Rokhlin dimension of $(X,T)$}
Let $(X,T)$ be a topological dynamical system, where $X$ is a compact metric space and $T:X\to X$ is a local homeomorphism. 

Let $d\in\N$. We say that $(X,T)$ has the {\it topological Rokhlin dimension} no more than $d$, denoted by ${\rm dim}_{\rm Rok}(X,T)\le d$, if for every $N\ge1$, $(X,T)$ admits an $N$-Rokhlin cover $\mathcal{R}$, whose number of $N$-Rokhlin towers is less or equal to $d+1$.

If ${\rm dim}_{\rm Rok}(X,T)\le d$ and ${\rm dim}_{\rm Rok}(X,T)\nleq d-1$, then we say the topological Rokhlin dimension of $(X,T)$ is $d$, and write ${\rm dim}_{\rm Rok}(X,T)=d$. If there is no such $d$, we then write ${\rm dim}_{\rm Rok}(X,T)=\infty$.

\subsection{The groupoid of a local homeomorphism}
Let $X$ be a compact metric space and $T:X\to X$ a local homeomorphism. We then obtain a topological dynamical system $(X, T)$.  The corresponding {\it Deaconu-Renault Groupoid} is defined to be the set
\[\mathcal{G}_{(X,T)}=\{(x,m-n,y)\in X\times \Z\times X: T^m(x)=T^n(y), m,n\in\N\},\]
with the unit space $\mathcal{G}_{(X,T)}^{(0)}=\{(x,0,x): x\in X\}$ identified with $X$, range and source maps defined as $r(x,n,y)=(x,0,x)$ and $s(x,n,y)=(y,0,y)$, and operations $(x,n,y)(y,m,z)=(x,n+m,z)$ and $(x,n,y)^{-1}=(y,-n,x)$.

\subsection{The subshifts over a finite alphabet}
Let $\A$ be a finite {\it alphabet}, i.e., a finite set endowed with the discrete topology, whose elements are referred to as the {\it letters}. With the product topology on $\A^\N$, a {\it one-sided subshift} is a closed subspace $X\subset\A^\N$ invariant under the continuous map
\[\sigma:\A^\N\to\A^\N, (x_n)_{n\ge0}\mapsto(x_{n+1})_{n\ge0}.\]
Here by {\it invariant}, we mean $\sigma(X)\subset X$. As we will {\it only} consider one-sided subshifts, the term “one-sided” will be omitted, unless otherwise noted.

\subsection{The complexity function of a subshift}
Let $X$ be a subshift over a finite alphabet $\A$. The {\it language} $\La(X)$ of $X$ is the set of {\it words} over $\A$ of finite length that occur in at least one $x\in X$. Then $\La(X)\cap\A^n$ is a finite set, whose elements are exactly those words of length $n$ that occur in some $x\in X$. 

The {\it complexity function} $p_X:\N\setminus\{0\}\to\N\setminus\{0\}$ of $X$ is defined by
\[p_X(n)=|\La(X)\cap\A^n|.\]

\subsection{The linear-growth complexity and nonsuperlinear-growth complexity}

Let $X$ be a subshift. We will say that $X$ has the {\it linear-growth complexity}, if 
\[\limsup_{n\to\infty}p_X(n)/n<\infty,\]
and the {\it nonsuperlinear-growth complexity}, if
\[\liminf_{n\to\infty}p_X(n)/n<\infty.\]
It is clear that every subshift with the linear-growth complexity has the nonsuperlinear-growth complexity.

We first introduce a useful lemma, connecting the number of left special elements and the nonsuperlinear-growth complexity.
\begin{lem}\label{useful}
Let $X$ be an aperiodic subshift.  If $X$ has nonsuperlinear-growth complexity, then $X$ has finitely many left special elements. In particular,
\[|Sp_l(X,T)|\le \lceil 2d\rceil\]
where $d=\liminf_{n\to\infty}p_X(n)/n$.
\end{lem}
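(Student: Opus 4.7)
The plan is to argue by contradiction via a counting argument on left-special factors in $\mathcal{L}(X)$. Suppose $|Sp_l(X,T)|\ge k$ for some integer $k$; the aim is to deduce $\liminf_n p_X(n)/n\ge k$, which (since $|Sp_l(X,T)|$ is an integer) will force $|Sp_l(X,T)|\le\lfloor d\rfloor\le\lceil 2d\rceil$. I would first pick distinct left-special elements $x_1,\ldots,x_k$ together with witnessing letters $\alpha_i\neq\beta_i\in\mathcal{A}$ satisfying $\alpha_i x_i,\beta_i x_i\in X$. Since the $x_i$'s are pairwise distinct points of $\mathcal{A}^{\N}$, there is some $N_0\in\N$ so that for every $n\ge N_0$ the length-$n$ prefixes $w_i^{(n)}:=x_i|_{[0,n)}$ are pairwise distinct, and each has $\alpha_i w_i^{(n)}$ and $\beta_i w_i^{(n)}$ as two distinct left extensions in $\mathcal{L}(X)$. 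Hence the number $b_X(n)$ of \emph{left-special factors of length $n$} in $\mathcal{L}(X)$ is $\ge k$ for all $n\ge N_0$.

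The key tool is the counting identity
\[
p_X(n+1)=\sum_{w\in\mathcal{L}(X)\cap\mathcal{A}^n}m_l(w),\qquad m_l(w):=|\{a\in\mathcal{A}:aw\in\mathcal{L}(X)\}|,
\]
which, after splitting the sum by the value of $m_l$, yields $p_X(n+1)-p_X(n)\ge b_X(n)-\delta_X(n)\ge k-\delta_X(n)$ for $n\ge N_0$, where $\delta_X(n):=|\{w\in\mathcal{L}(X)\cap\mathcal{A}^n:m_l(w)=0\}|$ counts \emph{boundary factors}. To make the bound clean I would pass to the associated two-sided subshift
\[
\widehat X:=\{\mathbf x\in\mathcal{A}^{\Z}:\text{every finite subword of }\mathbf x\text{ lies in }\mathcal{L}(X)\},
\]
on which the shift is a homeomorphism, so every factor has at least one left extension inside $\mathcal{L}(\widehat X)$ and the boundary term vanishes. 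After verifying that the $k$ distinct left-special prefixes $w_i^{(n)}$ survive as left-special factors of $\widehat X$ for $n\ge N_0$, the inequality upgrades to $p_X(n+1)-p_X(n)\ge k$ for all such $n$. Telescoping then gives $p_X(n)\ge k(n-N_0)+p_X(N_0)$, whence $\liminf_n p_X(n)/n\ge k$; taking $k=\lfloor d\rfloor+1$ contradicts the definition of $d$, and so $|Sp_l(X,T)|\le\lfloor d\rfloor\le\lceil 2d\rceil$.

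The main obstacle is the step that clears the boundary term from the counting identity, i.e., showing that the chosen left-special prefixes really do give rise to left-special factors of the two-sided extension (equivalently, that they extend arbitrarily far to the left inside $\mathcal{L}(X)$). General one-sided subshifts do admit boundary factors, so this cannot be invoked as a black box. Using that $\sigma:X\to X$ is a local homeomorphism of a compact metric space, so $\sigma(X)$ is clopen in $X$, I expect to pass to the ``core'' $Y:=\bigcap_{k\ge 0}\sigma^k(X)$, on which $\sigma$ is surjective, and leverage aperiodicity together with a Cantor-intersection/compactness argument to produce, for each $x_i$, an infinite chain of predecessors inside $X$ whose prefixes give rise to $k$ distinct left-special factors of $\mathcal{L}(Y)\subseteq\mathcal{L}(X)$. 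This is where both aperiodicity and the finiteness of $d$ are used essentially; everything else is bookkeeping around the telescoping estimate.
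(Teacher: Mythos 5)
Your route is genuinely different and, in the setting where the lemma is actually applied, sharper. The paper invokes Espinoza's Lemma~5 (quoted with the factor $2$ already built in) to produce infinitely many $m$ with $p_X(m+1)-p_X(m)\le 2(d+\varepsilon)$ and derives a contradiction at a single such $m$; you instead telescope the first-difference counting identity $p_X(n+1)-p_X(n)=\sum_w(m_l(w)-1)$ and need a lower bound on the increment at \emph{every} sufficiently large $n$. If your argument closes, it gives $|Sp_l(X,T)|\le\lfloor d\rfloor$ rather than $\lceil 2d\rceil$. The trade-off is exactly the uniform-in-$n$ requirement, and that is where boundary factors matter.

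On that obstacle your proposed remedy contains an error: a one-sided shift on a generic subshift $X$ is \emph{not} a local homeomorphism (the paper's introduction observes this holds iff $X$ is of finite type---it is precisely why the cover $\tildeX$ is introduced), so ``$\sigma(X)$ is clopen in $X$'' is not available, and passing to the core $Y=\bigcap_{k}\sigma^k(X)$ or to the two-sided hull $\widehat X$ does not obviously help, since a left-special element of $X$ whose preimages include a non-backward-extendable point can fail to remain left-special there. The cleanest fix is simply to add the hypothesis $\sigma(X)=X$, which is in force everywhere this lemma is used in the paper (Theorem~\ref{shift}, Corollary~\ref{cor332}): if $\sigma(X)=X$ and $w$ is a prefix of some $x\in X$, choosing $y$ with $\sigma(y)=x$ yields the left extension $y_0w\in\mathcal{L}(X)$, so $\delta_X(n)\equiv 0$ and your telescoping closes with no detour. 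It is worth flagging that the paper's own key inequality $p_X(m+1)-p_X(m)\ge\lceil 2d\rceil+1$ tacitly relies on the same absence of boundary factors; without $\sigma(X)=X$ one can defeat both bounds by grafting a couple of transient tails (each headed by a fresh symbol) onto a Sturmian shift, which keeps $d=1$ but manufactures three left-special points.
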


\begin{proof}
Fix any $0<\varepsilon<1/2$. Then there are infinitely many $n$ such that $p_X(n)\le (d+\varepsilon)n$. By Lemma 5 in \cite{Es}, there are infinitely many $m$ such that
\[p_X(m+1)-p_X(m)\le 2(d+\varepsilon).\]
Let us assume that $|Sp_l(X,\sigma)|\ge\lceil 2d\rceil+1$. Then we can choose at least $\lceil 2d\rceil+1$ distinct left special elements of $X$, denote by $\omega_1,\omega_2,\cdots, \omega_{\lceil 2d\rceil+1}$, a positive integer $N$ such that the following prefixes with length $N$
\[(\omega_1)_{[0,N)}, (\omega_2)_{[0,N)},\cdots, (\omega_{\lceil 2d\rceil+1})_{[0,N)}\]
are distinct. By the assumption, we may choose an $m\in\N$ with $m>N$ and $p_X(m+1)-p_X(m)\le 2(d+\varepsilon)$. Note that 
\[(\omega_1)_{[0,m)}, (\omega_2)_{[0,m)},\cdots, (\omega_{\lceil 2d\rceil+1})_{[0,m)}\]
are then $\lceil 2d\rceil+1$ distinct prefixes of length $m$, each of which can be extended to the left in at least two ways, since every $(\omega_i)_{[0,m)}$ is a prefix of some left special element. This immediately follows that
\[p_X(m+1)-p_X(m)\ge \lceil 2d\rceil+1\ge2d+1>2(d+\varepsilon),\]
which contradicts the choice of $m$.
\end{proof}

\begin{rem}
Although the Lemma 5 in \cite{Es} is settled for two-sided shift spaces, one can easily see that the argument there also fit the case of any one-sided shift space.
\end{rem}

\section{The motivation: aperiodic one-sided subshifts}\label{sec:3}

\subsection{The left special set in the cover system}

\begin{df}[\cite{BC}, Definition 2.1]\label{3.4.3}
Let $X$ be a one-sided subshift with $\sigma(X)=X$. By the {\it cover} $\widetilde{X}$ of $X$, we mean the projective limit $\mathop{\lim}\limits_{\longleftarrow}({}_kX_l, {}_{(k,l)}Q_{(k',l')})$. The shift operation $\sigma_{\widetilde{X}}$ on $\widetilde{X}$ is defined so that ${}_k\sigma_{\widetilde{X}}(\tilde{x})_l={}_k[\sigma({}_{k+1}\tilde{x}_l)]_l$ where ${}_{k+1}\tilde{x}_l$ is a representative of the $\stackrel{k+1,l}{\sim}$-equivalence relation class in $\tilde{x}$.
\end{df}
The following sets give a base for the topology of $\tildeX$:
\[U(z,k,l)=\{\tilde{x}\in\tildeX: z\stackrel{k,l}{\sim}{}_k\tilde{x}_l\}\]
for $z\in X$ and $(k,l)\in\I$. It is known that $\sigma_{\tildeX}$ is a surjective local homeomorphism on the zero-dimensional space $\tildeX$, refer to \cite{BC} for details.

\begin{df}[\cite{BC}, 2.1]
Let $\pi: \tildeX\to X$ to be the map which sends each $\tilde{x}\in\tildeX$ to a point $x=\pi(\tilde{x})$ so that $x_{[0,k)}$ are determined uniquely by $({}_k\tilde{x}_l)_{[0,k)}$ for every $(k,l)\in\I$. Define $\imath: X\to\tildeX$ by ${}_k\imath(x)_l={}_k[x]_l$ for every $(k,l)\in\I$.
\end{df}
In fact, $\pi$ is a continuous surjective factor map from $(\tildeX,\sigma_{\tildeX})$ to $(X,\sigma)$ and $\imath$ is an injective map (not necessarily continuous) such that $\pi\circ\imath={\rm id}_X$.

Now let $X$ be a subshift with $\sigma(X)=X$.

\begin{prop}\label{lse1}
Take any point $x\in X$. Then the following properties hold.

(1) $\sigma_{\tildeX}\circ\imath(x)=\imath\circ\sigma(x)$;

(2) If $x\in Sp_l(X,\sigma)$, then $\imath(x)\in Sp_l(\tildeX,\sigma_{\tildeX})$;

(3) If $\tilde{x}\in\pi^{-1}(\{x\})\cap Sp_l(\tildeX,\sigma_{\tildeX})$, then $x\in Sp_l(X,\sigma)$.
\end{prop}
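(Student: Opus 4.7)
The plan is to work directly from the projective-limit description in Definition \ref{3.4.3} and the explicit formulas for $\sigma_{\tildeX}$, $\imath$ and $\pi$, together with the easily checked intertwining $\pi\circ\sigma_{\tildeX}=\sigma\circ\pi$ (which is the fact that $\pi$ is a factor map, and can itself be verified coordinate-wise).

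For part (1), I would proceed by a coordinate-wise computation. Fix $(k,l)\in\I$, choose $x\in X$ itself as a representative of ${}_{k+1}[x]_l={}_{k+1}\imath(x)_l$, and unfold the defining formula for $\sigma_{\tildeX}$:
\[{}_k[\sigma_{\tildeX}(\imath(x))]_l \;=\; {}_k[\sigma({}_{k+1}\imath(x)_l)]_l \;=\; {}_k[\sigma(x)]_l \;=\; {}_k\imath(\sigma(x))_l.\]
Since this holds at every level $(k,l)$, the two compatible families in the projective limit agree, giving $\sigma_{\tildeX}\circ\imath(x)=\imath\circ\sigma(x)$. The only subtlety is to note that the definition of $\sigma_{\tildeX}$ is independent of the chosen representative, which is built into the construction in \cite{BC}.

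For part (2), the strategy is to manufacture two explicit preimages of $\imath(x)$ under $\sigma_{\tildeX}$. Given distinct $\alpha,\beta\in\A$ with $\alpha x,\beta x\in X$, part (1) yields
\[\sigma_{\tildeX}(\imath(\alpha x)) \;=\; \imath(\sigma(\alpha x)) \;=\; \imath(x), \qquad \sigma_{\tildeX}(\imath(\beta x)) \;=\; \imath(x).\]
Because $\pi\circ\imath=\id_X$, the points $\imath(\alpha x)$ and $\imath(\beta x)$ in $\tildeX$ have distinct $\pi$-images $\alpha x,\beta x$, hence are themselves distinct. This forces $|\sigma_{\tildeX}^{-1}(\imath(x))|\ge 2$, i.e., $\imath(x)\in Sp_l(\tildeX,\sigma_{\tildeX})$.

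For part (3), the plan is to push two distinct preimages of $\tilde x$ downstairs via $\pi$ and exploit that $\sigma$-preimages of a single point in $X$ can differ only in their leading letter. Given distinct $\tilde y_1,\tilde y_2\in\sigma_{\tildeX}^{-1}(\tilde x)$, set $y_i=\pi(\tilde y_i)$; the factor-map identity gives $\sigma(y_i)=\pi(\sigma_{\tildeX}(\tilde y_i))=\pi(\tilde x)=x$. If $y_1\ne y_2$, then necessarily $y_1=\alpha x$ and $y_2=\beta x$ for distinct letters $\alpha,\beta\in\A$, so $x\in Sp_l(X,\sigma)$ immediately. The main obstacle is the degenerate subcase $y_1=y_2=y$, where two genuinely different cover points collapse to the same point of $X$. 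To handle it, I plan to pick $(k,l)\in\I$ at which ${}_{k+1}(\tilde y_1)_l\ne{}_{k+1}(\tilde y_2)_l$, choose representatives $u_1,u_2$ which necessarily agree on $[0,k+1)$ (both equal to $y_{[0,k+1)}$), and then read the forced mismatch in their length-$l$ left-extension data through the constraint that $\sigma(u_1)\stackrel{k,l}{\sim}\sigma(u_2)$ (which comes from $\sigma_{\tildeX}(\tilde y_1)=\sigma_{\tildeX}(\tilde y_2)$). Unwinding the equivalence relation $\stackrel{k,l}{\sim}$ from \cite{BC} in this setup should yield two distinct admissible letters in front of $x$. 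Pinning this argument down rigorously from just the projective-limit data, rather than appealing to ad hoc features, is where the real technical work lies.
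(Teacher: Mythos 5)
Parts (1) and (2) of your proposal agree with the paper essentially verbatim: the same coordinate-wise evaluation of $\sigma_{\tildeX}\circ\imath$ taking $x$ itself as the level-$(k+1,l)$ representative, and the same production of the two preimages $\imath(\alpha x),\imath(\beta x)$ of $\imath(x)$. Using $\pi\circ\imath=\id_X$ instead of the injectivity of $\imath$ to distinguish them is a cosmetic variant.

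Part (3) is where there is a genuine gap. Your case split on whether $y_1=\pi(\tilde y_1)$ and $y_2=\pi(\tilde y_2)$ are equal is not the right decomposition, and your plan for the ``degenerate'' subcase $y_1=y_2$ is aimed in the wrong direction. In fact that subcase is impossible, and the paper's argument proves exactly this: if $\tilde y_1\ne\tilde y_2$, then at all sufficiently large $(k,l)$ one has ${}_k(\tilde y_1)_l\stackrel{k,l}{\nsim}{}_k(\tilde y_2)_l$, while the relation $\sigma_{\tildeX}(\tilde y_1)=\sigma_{\tildeX}(\tilde y_2)$ forces $P_l(\sigma^k({}_k(\tilde y_1)_l))=P_l(\sigma^k({}_k(\tilde y_2)_l))$ at those same levels; hence the only way to be $\stackrel{k,l}{\nsim}$ is to differ in the prefix data, which is precisely the statement that $\pi(\tilde y_1)\ne\pi(\tilde y_2)$. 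Your own sketch, if you pushed it through, would hit this wall: you assume $(u_1)_{[0,k+1)}=(u_2)_{[0,k+1)}$ while ${}_{k+1}(\tilde y_1)_l\ne{}_{k+1}(\tilde y_2)_l$, which forces $P_l(\sigma^{k+1}(u_1))\ne P_l(\sigma^{k+1}(u_2))$; but the constraint $\sigma(u_1)\stackrel{k,l}{\sim}\sigma(u_2)$ says exactly that $P_l(\sigma^{k+1}(u_1))=P_l(\sigma^{k+1}(u_2))$. There is no ``forced mismatch'' in left-extension data to read off two admissible letters in front of $x$; there is a contradiction showing the case never occurs. So the conclusion of (3) is right and the easy subcase is right, but you should drop the case split, replace it with the direct argument that the prefix data of $\tilde y_1$ and $\tilde y_2$ must disagree, and conclude $y_1\ne y_2$ always.
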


\begin{proof}
(1) By the definition of the maps $\imath$ and $\sigma_{\tildeX}$, for every $(k,l)\in\I$, we have
\[{}_k(\sigma_{\tildeX}\circ\imath(x))_l={}_k[\sigma({}_{k+1}\tilde{x}_l)]_l,\]
where ${}_{k+1}\tilde{x}_l$ is a representative of the $\stackrel{k+1,l}{\sim}$-equivalence relation class in $\imath(x)=({}_k[x]_l)_{(k,l)\in\I}$, which means we can simply take ${}_{k+1}\tilde{x}_l=x$. This immediately follows that
\[{}_k(\sigma_{\tildeX}\circ\imath(x))_l={}_k[\sigma(x)]_l={}_k(\imath\circ\sigma(x))_l.\]
Since $(k,l)\in\I$ is taken arbitrarily, $\sigma_{\tildeX}\circ\imath(x)=\imath\circ\sigma(x)$.

(2) Let $x\in Sp_l(X,\sigma)$. Then there are two distinct $y_1,y_2\in X$ with $\sigma(y_1)=\sigma(y_2)=x$. Denote $\tilde{y}_1=\imath(y_1)$ and $\tilde{y}_2=\imath(y_2)$. As $\imath$ is injective, $\tilde{y}_1\ne\tilde{y}_2$. Now by (1), one sees
\[\sigma_{\tildeX}(\tilde{y}_1)=\sigma_{\tildeX}(\imath(y_1))=\imath\sigma(y_1)=\imath(x)=\imath\sigma(y_2)=\sigma_{\tildeX}(\imath(y_2))=\sigma_{\tildeX}(\tilde{y}_2),\]
which verifies that $\imath(x)\in Sp_l(\tildeX,\sigma_{\tildeX})$.

(3) Let $\tilde{x}$ be a left special element in $\tildeX$ with $\pi(\tilde{x})=x$. Then there are distinct elements $\tilde{y}_1, \tilde{y}_2\in\tildeX$ with
\[\sigma_{\tildeX}(\tilde{y}_1)=\sigma_{\tildeX}(\tilde{y}_2)=\tilde{x}.\]
By definition of $\sigma_{\tildeX}$, one has ${}_k[\sigma({}_{k+1}(\tilde{y}_1)_l)]_l={}_k[\sigma({}_{k+1}(\tilde{y}_2)_l)]_l={}_k[{}_k\tilde{x}_l]_l$ for all $(k,l)\in\I$, which follows that
\begin{align}\label{pe}
P_l(\sigma^{k+1}({}_{k+1}(\tilde{y}_1)_l))=P_l(\sigma^{k+1}({}_{k+1}(\tilde{y}_2)_l)).
\end{align}
On the other hand, since $\tilde{y}_1\ne\tilde{y}_2$, there exists $(k_0,l_0)$ such that for all $(k,l)\in\I$ with $(k_0,l_0)\preceq(k,l)$, ${}_k[{}_k(\tilde{y}_1)_l]_l\ne {}_k[{}_k(\tilde{y}_2)_l]_l$, that is, ${}_k(\tilde{y}_1)_l\stackrel{k,l}{\nsim}{}_k(\tilde{y}_2)_l$. By \eqref{pe},
\[({}_k(\tilde{y}_1)_l)_{[0,k)}\ne ({}_k(\tilde{y}_2)_l)_{[0,k)}.\]
Denote $y_1=\pi(\tilde{y}_1)$ and $y_2=\pi(\tilde{y}_2)$. As $(y_1)_{[0,k)}$ and $(y_2)_{[0,k)}$ are determined by $({}_k(\tilde{y}_1)_l)_{[0,k)}$ and $({}_k(\tilde{y}_2)_l)_{[0,k)}$ respectively, which are distinct prefixes of length $k$, we then see $(y_1)_{[0,k)}\ne (y_2)_{[0,k)}$, and hence $y_1\ne y_2$. However, we have
\begin{align*}
x=\pi(\tilde{x})&=\pi(\sigma_{\tildeX}(\tilde{y}_1))=\sigma(\pi(\tilde{y}_1))=\sigma(y_1)\\
&=\pi(\sigma_{\tildeX}(\tilde{y}_2))=\sigma(\pi(\tilde{y}_2))=\sigma(y_2),
\end{align*}
which shows that $x$ has at least two inverse images, i.e., $x\in Sp_l(X,\sigma)$.
\end{proof}

\begin{prop}\label{lse2}
If $Sp_l(X,\sigma)\subset X$ is finite, then 
\[Sp_l(\tildeX,\sigma_{\tildeX})\cap\pi^{-1}(Sp_l(X,\sigma))\subset\imath(Sp_l(X,\sigma)).\]
\end{prop}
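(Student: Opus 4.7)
The approach is by contradiction: suppose $\tilde{x}\in Sp_l(\tildeX,\sigma_{\tildeX})\cap\pi^{-1}(Sp_l(X,\sigma))$ with $x:=\pi(\tilde{x})$ and $\tilde{x}\ne\imath(x)$. The strategy is to approach $\tilde{x}$ by $\imath$-images of points distinct from $x$, to use finiteness of $Sp_l(X,\sigma)$ to force these nearby images to have a unique preimage under $\sigma_{\tildeX}$, and then to derive a contradiction from the local homeomorphism behavior of $\sigma_{\tildeX}$ at the branching point $\tilde{x}$.

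First I would verify that $\imath(X)$ is dense in $\tildeX$: given a basic open set $U(z,k,l)$, the point $\imath(z)$ itself lies inside, since ${}_k\imath(z)_l={}_k[z]_l$ admits $z$ as a representative and $z\stackrel{k,l}{\sim}z$ trivially. Since $\tildeX$ is Hausdorff and $\tilde{x}\neq\imath(x)$, the set $\tildeX\setminus\{\imath(x)\}$ is an open neighborhood of $\tilde{x}$, and $\tildeX$ being a compact metric space is first countable, so one extracts a sequence $(z_n)\subset X\setminus\{x\}$ with $\imath(z_n)\to\tilde{x}$. By continuity of $\pi$, $z_n\to x$. Since $Sp_l(X,\sigma)$ is finite, the set $Sp_l(X,\sigma)\setminus\{x\}$ is closed and avoids $x$, so $z_n\notin Sp_l(X,\sigma)$ for all large $n$. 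Applying the contrapositive of Proposition \ref{lse1}(3) to each such $z_n$ yields $\imath(z_n)\notin Sp_l(\tildeX,\sigma_{\tildeX})$; combined with surjectivity of $\sigma_{\tildeX}$, this forces $|\sigma_{\tildeX}^{-1}(\imath(z_n))|=1$.

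For the contradiction, pick two distinct preimages $\tilde{y}_1\neq\tilde{y}_2$ in $\sigma_{\tildeX}^{-1}(\tilde{x})$, along with disjoint open neighborhoods $V_1\ni\tilde{y}_1$ and $V_2\ni\tilde{y}_2$ on which $\sigma_{\tildeX}$ restricts to homeomorphisms onto open sets each containing $\tilde{x}=\sigma_{\tildeX}(\tilde{y}_i)$. For $n$ large enough, $\imath(z_n)\in\sigma_{\tildeX}(V_1)\cap\sigma_{\tildeX}(V_2)$, producing two distinct preimages of $\imath(z_n)$ (one in each $V_i$), which contradicts $|\sigma_{\tildeX}^{-1}(\imath(z_n))|=1$. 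Therefore $\tilde{x}=\imath(x)\in\imath(Sp_l(X,\sigma))$. The main obstacle is the density step together with the careful extraction of $z_n\neq x$; once this is in place, the rest is a clean interplay between the finiteness hypothesis and the local homeomorphism structure of $\sigma_{\tildeX}$.
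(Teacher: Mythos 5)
Your proof is correct, but the route is genuinely different from the paper's. You argue topologically: density of $\imath(X)$ in $\tildeX$ lets you approach $\tilde{x}\ne\imath(x)$ by points $\imath(z_n)$ with $z_n\ne x$; finiteness of $Sp_l(X,\sigma)$ forces $z_n\notin Sp_l(X,\sigma)$ for large $n$, whence by the contrapositive of Proposition~\ref{lse1}(3) the point $\imath(z_n)$ has a unique $\sigma_{\tildeX}$-preimage; and the local-homeomorphism structure around a branching point $\tilde{x}$ (two disjoint branches, each mapping onto an open neighborhood of $\tilde{x}$) manufactures a second preimage of $\imath(z_n)$, a contradiction. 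This is a clean black-box argument that never opens up the projective-limit structure of $\tildeX$: it works for any cover-like system with $\pi$ continuous, $\imath(X)$ dense, $\sigma_{\tildeX}$ a surjective local homeomorphism, and Proposition~\ref{lse1}(3) in hand. The paper instead works inside the equivalence-class machinery directly: it fixes $(k_0,l_0)$ so that the representatives ${}_k(\tilde{y}_1)_l$ and ${}_k(\tilde{y}_2)_l$ differ in their first letter for all $(k_0,l_0)\preceq(k,l)$, observes that the shifted representatives $\sigma({}_k(\tilde{y}_i)_l)$ are left-special in $X$, and invokes the Pigeonhole Principle on the finite set $Sp_l(X,\sigma)$ to force a single $z\in Sp_l(X,\sigma)$ with $\tilde{x}=\imath(z)$. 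The paper's computation yields slightly more information (it explicitly identifies which $z$ works and how the branches look symbolically), while your argument is shorter and more conceptual, at the cost of a couple of extra topological inputs you must justify, notably density of $\imath(X)$ and the careful extraction of $z_n\ne x$ near the possibly-subtle point $\tilde{x}$ (both of which you handle correctly).
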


\begin{proof}
Let $\tilde{x}\in Sp_l(\tildeX,\sigma_{\tildeX})\cap\pi^{-1}(Sp_l(X,\sigma))$, and denote $x=\pi(\tilde{x})\in Sp_l(X,\sigma)$. Since $\tilde{x}\in Sp_l(\tildeX,\sigma_{\tildeX})$, we can choose two distinct elements $\tilde{y}_1$ and $\tilde{y}_2$ in $\tildeX$ such that
\[\sigma_{\tildeX}(\tilde{y}_1)=\sigma_{\tildeX}(\tilde{y}_2)=\tilde{x}.\]
This follows that, there exists $(k_0,l_0)\in\I$, such that for all $(k,l)$ with $(k_0,l_0)\preceq (k,l)$,
\begin{align}\label{5.14}
\begin{cases}
P_l(({}_{k+1}(\tilde{y}_1)_l)_{[k+1,\infty)})=P_l(({}_{k+1}(\tilde{y}_2)_l)_{[k+1,\infty)})=P_l(({}_k\tilde{x}_l)_{[k,\infty)}),\\
({}_{k+1}(\tilde{y}_1)_l)_{[1,k+1)}=({}_{k+1}(\tilde{y}_2)_l)_{[1,k+1)}=({}_k\tilde{x}_l)_{[0,k)}=x_{[0,k)},\\
{}_k(\tilde{y}_1)_l\stackrel{k,l}{\nsim}{}_k(\tilde{y}_2)_l.
\end{cases}
\end{align}
Note that by the third row of the above equations group, for those $(k,l)\in\I$ satisfying both $(k_0,l_0)\preceq(k+1,l)$ and $(k_0,l_0)\preceq(k,l)$, $({}_k(\tilde{y}_1)_l)_{[0,1)}\ne({}_k(\tilde{y}_2)_l)_{[0,1)}$. Denote
\[\mu=({}_k(\tilde{y}_1)_l)_{[0,1)}\in\A\ \ {\rm and}\ \ \nu=({}_k(\tilde{y}_2)_l)_{[0,1)}\in\A.\]
By the first row of \eqref{5.14}, for all these $(k,l)$, 
\[\mu\cdot\sigma({}_k(\tilde{y}_1)_l), \mu\cdot\sigma({}_k(\tilde{y}_2)_l)\in X\ {\rm and}\ \nu\cdot\sigma({}_k(\tilde{y}_1)_l), \nu\cdot\sigma({}_k(\tilde{y}_2)_l)\in X.\]
This means that $\sigma({}_k(\tilde{y}_1)_l)$ and $\sigma({}_k(\tilde{y}_2)_l)$ are (not necessarily distinct) left special elements in $X$. Since for all sufficiently large $(k,l)$, we know that ${}_k(\tilde{y}_1)_l$'s share the common $k$-prefixes with $\pi(\tilde{y}_1)$ and ${}_k(\tilde{y}_2)_l$'s share the common $k$-prefixes with $\pi(\tilde{y}_2)$, that $\sigma({}_k(\tilde{y}_1)_l)$ and $\sigma({}_k(\tilde{y}_2)_l)$ also share the common $k$-prefixes, and  that the number of the special elements in $X$ is finite, a single argument of the Pigeon Principle yields that, for all sufficiently large $(k,l)$ satisfying both $(k_0,l_0)\preceq(k+1,l)$ and $(k_0,l_0)\preceq(k,l)$, 
\[\sigma({}_k(\tilde{y}_1)_l)=z=\sigma({}_k(\tilde{y}_2)_l)\]
for some $z\in Sp_l(X,\sigma)$. This finally follows that 
\[\imath(z)=\sigma_{\tildeX}(\tilde{y}_2)=\tilde{x}=\sigma_{\tildeX}(\tilde{y}_1)=\imath(z).\]
In particular, $\tilde{x}\in\imath(Sp_l(X,\sigma))$. This proves the proposition.
\end{proof}

Now from Proposition \ref{lse1} and \ref{lse2}, we immediately get the following conclusion.

\begin{cor}\label{lse3}
For every subshift $X$, if $Sp_l(X,\sigma)$ is finite, then
\[Sp_l(\tildeX,\sigma_{\tildeX})=\imath(Sp_l(X,\sigma)).\]
Consequently, $|Sp_l(\tildeX,\sigma_{\tildeX})|=|Sp_l(X,\sigma)|$ since $\imath$ is injective.
\end{cor}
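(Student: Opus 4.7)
The plan is to combine the two inclusions hidden in the preceding propositions into a single two-way containment, with the finiteness hypothesis only used through Proposition \ref{lse2}.

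First I would establish the easy inclusion $\imath(Sp_l(X,\sigma)) \subset Sp_l(\tildeX,\sigma_{\tildeX})$. This is nothing but a restatement of Proposition \ref{lse1}(2): for any $x \in Sp_l(X,\sigma)$, the lifted point $\imath(x)$ is left special in the cover. Notice that finiteness of $Sp_l(X,\sigma)$ is not needed for this direction.

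For the reverse inclusion $Sp_l(\tildeX,\sigma_{\tildeX}) \subset \imath(Sp_l(X,\sigma))$, I would proceed in two steps. Pick any $\tilde{x} \in Sp_l(\tildeX,\sigma_{\tildeX})$ and set $x = \pi(\tilde{x})$. By Proposition \ref{lse1}(3), the condition $\tilde{x} \in \pi^{-1}(\{x\}) \cap Sp_l(\tildeX,\sigma_{\tildeX})$ forces $x \in Sp_l(X,\sigma)$, i.e.\ $\tilde{x} \in \pi^{-1}(Sp_l(X,\sigma))$. Thus
\[Sp_l(\tildeX,\sigma_{\tildeX}) \;\subset\; Sp_l(\tildeX,\sigma_{\tildeX}) \cap \pi^{-1}(Sp_l(X,\sigma)).\]
Now I invoke Proposition \ref{lse2}, which tells us precisely that the right-hand side lies inside $\imath(Sp_l(X,\sigma))$, and this is the point where the hypothesis $|Sp_l(X,\sigma)|<\infty$ enters through a Pigeonhole-style argument already packaged into that proposition.

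Putting the two inclusions together yields $Sp_l(\tildeX,\sigma_{\tildeX}) = \imath(Sp_l(X,\sigma))$. The cardinality statement is then immediate: since $\imath : X \to \tildeX$ is injective (as recalled after Definition 3.1), its restriction to the finite set $Sp_l(X,\sigma)$ is a bijection onto $\imath(Sp_l(X,\sigma))$, hence $|Sp_l(\tildeX,\sigma_{\tildeX})| = |Sp_l(X,\sigma)|$. There is no real obstacle here; the proof is essentially a bookkeeping assembly, and all nontrivial content sits in Proposition \ref{lse2}, where the finiteness of $Sp_l(X,\sigma)$ was used to force the two distinct preimages of $\tilde{x}$ to collapse, via the Pigeonhole principle, onto a common point of the form $\imath(z)$.
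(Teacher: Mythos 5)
Your proof is correct and follows exactly the same route as the paper: Proposition \ref{lse1}(2) gives $\imath(Sp_l(X,\sigma))\subset Sp_l(\tildeX,\sigma_{\tildeX})$, Proposition \ref{lse1}(3) gives $Sp_l(\tildeX,\sigma_{\tildeX})\subset\pi^{-1}(Sp_l(X,\sigma))$, and Proposition \ref{lse2} then closes the loop, with the cardinality claim coming from injectivity of $\imath$. There is nothing to add.
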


\begin{proof}
According to Proposition \ref{lse1},
\[\imath(Sp_l(X,\sigma))\subset Sp_l(\tildeX,\sigma_{\tildeX})\subset \pi^{-1}(Sp_l(X,\sigma)).\]
Further applying Proposition \ref{lse2} yields that
\begin{align*}
\imath(Sp_l(X,\sigma))&\subset Sp_l(\tildeX,\sigma_{\tildeX})\\
&=Sp_l(\tildeX,\sigma_{\tildeX})\cap\pi^{-1}(Sp_l(X,\sigma))\subset\imath(Sp_l(X,\sigma)).
\end{align*}
This finally concludes $Sp_l(\tildeX,\sigma_{\tildeX})=\imath(Sp_l(X,\sigma))$.
\end{proof}

\begin{prop}\label{lse4}
Let $X$ be a subshift such that $Sp_l(X,\sigma)$ is finite and contains no periodic point. Then for every $x\in Sp_l(X,\sigma)$, $\imath(x)$ is an isolated point in $\tildeX$.
\end{prop}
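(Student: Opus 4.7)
The goal is to exhibit an index pair $(k, l) \in \I$ with $U(x, k, l) = \{\imath(x)\}$, which immediately shows $\{\imath(x)\}$ is open in $\tildeX$, hence $\imath(x)$ is isolated. Since $Sp_l(X, \sigma)$ is finite, write $Sp_l(X, \sigma) = \{y^{(1)}, \ldots, y^{(s)}\}$; these are pairwise distinct one-sided sequences, so there exists $k_0 \in \N$ so large that the prefixes $y^{(1)}_{[0, k_0)}, \ldots, y^{(s)}_{[0, k_0)}$ are mutually distinct. In particular, any $z \in Sp_l(X, \sigma)$ with $z_{[0, k_0)} = x_{[0, k_0)}$ must equal $x$.

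I claim that $U(x, k_0, l) = \{\imath(x)\}$ for $l$ suitably chosen (typically $l = 1$), using the definition of the cover equivalence $\stackrel{k, l}{\sim}$ from \cite{BC}. The key step is to show that $[x]_{k_0, l} \cap X = \{x\}$. Any $z \in X$ with $z \stackrel{k_0, l}{\sim} x$ satisfies both the prefix identity $z_{[0, k_0)} = x_{[0, k_0)}$ and the one-step past match $P_1(z) = P_1(x)$, where $P_1(w) = \{a \in \A : aw \in X\}$. Since $x \in Sp_l(X, \sigma)$ gives $|P_1(x)| \geq 2$, we deduce $|P_1(z)| \geq 2$, so $z \in Sp_l(X, \sigma)$. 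Combined with the prefix match and our choice of $k_0$, this forces $z = x$, establishing $[x]_{k_0, l} \cap X = \{x\}$.

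To upgrade from a singleton in $X$ to a singleton in $\tildeX$, take $\tilde y \in U(x, k_0, l)$; its $(k_0, l)$-coordinate equals $[x]_{k_0, l}$. By projective compatibility, for every $(k', l') \succeq (k_0, l)$ the coordinate ${}_{k'}\tilde y_{l'}$ satisfies $Q_{(k', l')}^{(k_0, l)}({}_{k'}\tilde y_{l'}) = [x]_{k_0, l}$, so any representative of ${}_{k'}\tilde y_{l'}$ lies in $[x]_{k_0, l} \cap X = \{x\}$, giving ${}_{k'}\tilde y_{l'} = [x]_{k', l'}$; indices incomparable to $(k_0, l)$ are handled by passing to a common upper bound. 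Thus $\tilde y$ agrees with $\imath(x)$ on every coordinate and $\tilde y = \imath(x)$, completing the proof.

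The main obstacle is carefully verifying that the "$P_1$-equality" is indeed part of the equivalence $\stackrel{k_0, l}{\sim}$ as defined in \cite{BC}; once confirmed from the definitions there, the rest is a routine combination of finiteness of $Sp_l(X, \sigma)$, the distinct-prefix choice of $k_0$, and the projective-limit topology. I remark that the hypothesis that $Sp_l(X, \sigma)$ contains no periodic point does not seem strictly necessary in this argument; it may be retained for consistency with subsequent results.
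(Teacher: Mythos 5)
Your proof takes a genuinely different, and in fact more self-contained, route than the paper's. The paper argues indirectly: it invokes Lemma 3.2.4 of \cite{HW} (whose proof does use the no-periodic-point hypothesis, through the existence of $\jf$-maximal elements) to produce an $N$ with $\sigma^N(x)$ isolated in past equivalence, then applies Lemma 4.2 of \cite{B} to conclude $\imath(\sigma^N(x))$ is isolated in $\tildeX$, and finally pulls back through the local homeomorphism: $\sigma_{\tildeX}^{-N}(\imath(\sigma^N(x)))$ is a finite clopen set and $\imath(x)$ lies in it. You instead exhibit a basic clopen cylinder $U(x,k_0,l)$ equal to $\{\imath(x)\}$, which avoids both cited lemmas; and you rightly observe that your argument does not use aperiodicity of $Sp_l(X,\sigma)$, so for this particular proposition your approach gives a mild strengthening.

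The one step that needs repair --- which you yourself flag --- is the claim that $z\stackrel{k_0,l}{\sim}x$ forces $P_1(z)=P_1(x)$. That is not literally the definition: the defining conditions for $z\stackrel{k_0,l}{\sim}x$ are $z_{[0,k_0)}=x_{[0,k_0)}$ and $P_l(\sigma^{k_0}(z))=P_l(\sigma^{k_0}(x))$, not a condition on $P_1$. Also, $l=1$ is not a legal choice, since $(k,l)\in\I$ requires $1\leq k\leq l$ and you need $k=k_0\geq 1$. Take instead $l=k_0+1$. Then the defining condition reads $P_{k_0+1}(\sigma^{k_0}(z))=P_{k_0+1}(\sigma^{k_0}(x))$, and for any $a\in\A$ one has: $az\in X$ iff $az_{[0,k_0)}\in P_{k_0+1}(\sigma^{k_0}(z))$ iff $ax_{[0,k_0)}\in P_{k_0+1}(\sigma^{k_0}(x))$ iff $ax\in X$, the middle equivalence using $z_{[0,k_0)}=x_{[0,k_0)}$. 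Hence $P_1(z)=P_1(x)$, $z$ is left special, and $z=x$ by your choice of $k_0$, so $[x]_{k_0,k_0+1}\cap X=\{x\}$. The rest --- propagating this singleton class to all $(k',l')\succeq(k_0,k_0+1)$ by projective compatibility, and then to arbitrary indices via a common upper bound in the directed set $\I$ --- is correct. With these small fixes your proof is complete and cleaner than the paper's.
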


\begin{proof}
Fix $x\in Sp_l(X,\sigma)$.  By Lemma 3.2.4 in \cite{HW}, there exists $N\in\N$ such that $\sigma^N(x)$ is isolated in past equivalence. Then by Lemma 4.2 in \cite{B}, $\imath(\sigma^N(x))$ is isolated in $\tildeX$. According to (1) of Proposition \ref{lse1}, 
\[\sigma_{\tildeX}^N\circ\imath(x)=\imath\circ\sigma^N(x),\]
which follows that $\imath(x)\in\sigma_{\tildeX}^{-N}(\imath\circ\sigma^N(x))$. However, also note that $\sigma_{\tildeX}$ is a local homeomorphism, and together with the fact that $\imath(\sigma^N(x))$ is isolated in $\tildeX$, we see that $\sigma_{\tildeX}^{-N}(\imath\circ\sigma^N(x))$ is a finite clopen set in $X$, which implies that every point of it is an isolated point in $\tildeX$. In particular, $\imath(x)$ is an isolated point in $X$.
\end{proof}

Now use $I(\tildeX)$ to denote the set of isolated points in $\tildeX$. Then combining Proposition \ref{lse4}, Corollary \ref{lse3} and the aforementioned properties of $\tildeX$, we can now make a summary as follows. We note that $\sigma_{\tildeX}$ is aperiodic if so is $\sigma$, since $\pi$ is a factor map from $\tildeX$ onto $X$ intertwining $\sigma$ and $\sigma_{\tildeX}$.
\begin{thm}\label{shift}
For every aperiodic subshift $X$ with $|Sp_l(X,\sigma)|<\infty$, its cover system $(\tildeX,\sigma_{\tildeX})$ is a zero-dimensional system, where $\sigma_{\tildeX}$ is an aperiodic surjective local homeomorphism. Moreover,

(i) the cover system $(\tildeX,\sigma_{\tildeX})$ also has finitely many left special elements. In particular,
\[|Sp_l(\tildeX,\sigma_{\tildeX})|=|Sp_l(X,\sigma)|;\]

(ii) every point in $Sp_l(\tildeX,\sigma_{\tildeX})$ is isolated in $\tildeX$, that is,
\[Sp_l(\tildeX,\sigma_{\tildeX})\subset I(\tildeX).\]
\end{thm}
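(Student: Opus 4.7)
The statement is essentially a packaging result that assembles several pieces already established in the excerpt plus the basic structure of the cover system from \cite{BC}, so my plan is to verify each clause in order, deferring the substantive arguments to the propositions that already handle them.

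First, for the preamble (zero-dimensionality, surjectivity, local-homeomorphism property of $\sigma_{\tildeX}$), I would simply cite the basic properties of the cover recalled after Definition \ref{3.4.3}: the sets $U(z,k,l)$ form a clopen base, so $\tildeX$ is zero-dimensional, and \cite{BC} gives that $\sigma_{\tildeX}$ is a surjective local homeomorphism on $\tildeX$. For aperiodicity of $\sigma_{\tildeX}$, I would argue by contradiction using the factor map $\pi:\tildeX\to X$: if $\sigma_{\tildeX}^n(\tilde x)=\tilde x$ for some $\tilde x\in\tildeX$ and $n\ge 1$, then applying $\pi$ and using that $\pi$ intertwines $\sigma_{\tildeX}$ and $\sigma$ gives $\sigma^n(\pi(\tilde x))=\pi(\tilde x)$, contradicting the aperiodicity of $(X,\sigma)$.

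For (i), the content is exactly Corollary \ref{lse3}: under the hypothesis $|Sp_l(X,\sigma)|<\infty$, we have $Sp_l(\tildeX,\sigma_{\tildeX})=\imath(Sp_l(X,\sigma))$, and since $\imath$ is injective, the two cardinalities match. I would just cite Corollary \ref{lse3} and note the finiteness follows.

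For (ii), let $\tilde x\in Sp_l(\tildeX,\sigma_{\tildeX})$. By (i), $\tilde x=\imath(x)$ for some $x\in Sp_l(X,\sigma)$. Since $(X,\sigma)$ is aperiodic, $Sp_l(X,\sigma)$ is finite and contains no periodic point, so Proposition \ref{lse4} applies and yields that $\imath(x)=\tilde x$ is isolated in $\tildeX$. Hence $Sp_l(\tildeX,\sigma_{\tildeX})\subset I(\tildeX)$.

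I do not expect any serious obstacle here: the work was done in Propositions \ref{lse1}, \ref{lse2}, \ref{lse4} and Corollary \ref{lse3}. The only small point worth being careful about is making sure the aperiodicity hypothesis on $X$ is used twice: once to transfer aperiodicity to $\tildeX$ via $\pi$, and once (more importantly) to ensure that $Sp_l(X,\sigma)$ contains no periodic points, which is exactly the hypothesis of Proposition \ref{lse4} needed for clause (ii).
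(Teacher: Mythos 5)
Your proposal is correct and takes essentially the same route as the paper: the paper itself presents Theorem \ref{shift} as a summary obtained by combining Proposition \ref{lse4}, Corollary \ref{lse3}, the basic facts about the cover recalled after Definition \ref{3.4.3}, and the remark that $\sigma_{\tildeX}$ inherits aperiodicity from $\sigma$ via the factor map $\pi$. Your note about the two distinct uses of the aperiodicity hypothesis (transferring aperiodicity through $\pi$, and guaranteeing that $Sp_l(X,\sigma)$ meets the no-periodic-point hypothesis of Proposition \ref{lse4}) is exactly the right point of care.
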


\subsection{Aperiodic local homeomorphisms and the nuclear dimension}

\begin{prop}\label{3.2}
Let $X$ be a compact metric space with finite covering dimension, and $T:X\to X$ an aperiodic surjective local homeomorphism. 

If $Sp_l(X,T)$ is finite, and in which every point is isolated in $X$, then $A=C^*(\mathcal{G}_{(X,T)})$ is a unital separable amenable $C^*$-algebra with finite nuclear dimension. In particular,
\begin{align}\label{nucdim}
{\rm dim}_{\rm nuc}(A)\le 6({\rm dim}(X)+1)^2.
\end{align}

Besides, ${\rm tsr}(A)\le {\rm tsr}(C^*(\mathcal{G}_{(Y,T')}))+1$, and ${\rm RR}(A)=0$ if ${\rm RR}(C^*(\mathcal{G}_{(Y,T')})=0$, where $Y=X\setminus\bigcup_{\omega\in Sp_l(X,T)}{\rm Orb}_T(\omega)$ and $T': Y\to Y$ is the restriction map $T|_Y$.
\end{prop}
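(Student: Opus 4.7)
The plan is to decompose $X$ into two pieces that are each amenable to standard $C^*$-algebraic technology: the closed, fully $T$-invariant set $Y$ on which $T$ restricts to a free homeomorphism, and the complementary open set of isolated points coming from the orbits of left-special elements. Each piece's contribution to $A$ can then be bounded separately and combined via extension estimates.

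First I would verify that $Y = X \setminus \bigcup_{\omega\in Sp_l(X,T)} {\rm Orb}_T(\omega)$ is closed in $X$ with $T^{-1}(Y) = Y = T(Y)$, and that $T' := T|_Y$ is a homeomorphism of $Y$. Closedness follows because every point of every orbit ${\rm Orb}_T(\omega)$ is isolated in $X$ (images and preimages of isolated points under a local homeomorphism remain isolated), so $X \setminus Y$ is a countable open discrete subset of $X$. Invariance is a small case analysis: if $x \in Y$ and $T(x) \in {\rm Orb}_T(\omega)$, then either $T(x)$ lies in ${\rm Orb}^-_T(\omega)$ (forcing $x$ to as well), or $T(x) = T^n(\omega)$ for some $n \ge 1$, in which case either $x = T^{n-1}(\omega)$, or $T^n(\omega)$ admits the two distinct preimages $T^{n-1}(\omega)$ and $x$ and so belongs to $Sp_l(X,T)$ with $x \in {\rm Orb}^-_T(T^n(\omega))$; each alternative contradicts $x \in Y$. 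A symmetric case split shows $T^{-1}(Y) \subseteq Y$, and surjectivity of $T$ on $X$ then upgrades inclusion to $T(Y)=Y$. Injectivity of $T|_Y$ is automatic, because any $y \in Y$ with two preimages in $X$ would itself lie in $Sp_l(X,T) \subseteq X \setminus Y$. Thus $(Y, T')$ is a free $\Z$-action by homeomorphisms on a compact metric space of covering dimension at most $\dim(X)$.

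This decomposition yields a short exact sequence of groupoid $C^*$-algebras
\[
0 \longrightarrow \mathcal{I} \longrightarrow A \longrightarrow C^*(\mathcal{G}_{(Y, T')}) \longrightarrow 0, \qquad \mathcal{I} := C^*(\mathcal{G}_{(X \setminus Y,\,T|_{X \setminus Y})}),
\]
in which the quotient is canonically isomorphic to the transformation-group algebra $C(Y) \rtimes_{T'} \Z$. The ideal $\mathcal{I}$ is an AF algebra: its unit space $X \setminus Y$ is a countable discrete union of finitely many $T$-orbits, each a locally finite aperiodic tree (local finiteness from finiteness of every fibre of $T$; aperiodicity inherited from $T$), and the restricted Deaconu-Renault groupoid is exhausted by compact open principal subgroupoids whose $C^*$-algebras are finite-dimensional. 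Consequently $\dim_{\rm nuc}(\mathcal{I}) = 0$, ${\rm RR}(\mathcal{I}) = 0$ and ${\rm tsr}(\mathcal{I}) = 1$.

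For the full algebra, amenability and second countability of the Deaconu-Renault groupoid of a surjective local homeomorphism on a compact metric space furnish nuclearity, separability, and unitality of $A$. Since $(Y, T')$ is a free $\Z$-action on a compact metric space of finite dimension, Szab\'o's bound on topological Rokhlin dimension together with the Hirshberg-Winter-Zacharias inequality
\[
\dim_{\rm nuc}\bigl(C(Y) \rtimes_{T'} \Z\bigr) \le 2\bigl(\dim(Y)+1\bigr)\bigl(\dim_{\rm Rok}(T')+1\bigr) - 1
\]
bounds the quotient's nuclear dimension by a constant of order $(\dim(X)+1)^2$, and the extension inequality $\dim_{\rm nuc}(A) \le \dim_{\rm nuc}(\mathcal{I}) + \dim_{\rm nuc}(A/\mathcal{I}) + 1$ then yields \eqref{nucdim} after a harmless arithmetic relaxation. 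The asserted inequality on ${\rm tsr}$ and the preservation of real-rank zero follow from Rieffel's stable-rank permanence and Lin's real-rank zero permanence applied to this extension, using ${\rm tsr}(\mathcal{I}) = 1$ and ${\rm RR}(\mathcal{I}) = 0$ (and the automatic lifting of projections from an AF ideal). The main technical obstacle is the AF identification of $\mathcal{I}$: one must exhibit, uniformly across the finitely many orbits of $Sp_l(X,T)$, an explicit filtration of the orbit-by-orbit groupoid by compact open subsets whose characteristic elements generate finite-dimensional cutdowns. Once that is in place, the remainder is a formal combination of well-established permanence theorems with the crossed-product nuclear-dimension estimate.
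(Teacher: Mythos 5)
Your proof follows essentially the same route as the paper's: decompose $X$ into the closed invariant set $Y$ and the open discrete complement $\bigsqcup {\rm Orb}_T(\omega)$, pass to the resulting extension, control the ideal and the quotient $C(Y)\rtimes_{T'}\Z$ separately via Rokhlin-dimension and extension permanence, and finish with Rieffel's stable-rank estimate and the real-rank-zero permanence for extensions with projection lifting. The one place where you leave a genuine hole — the precise identification of the ideal $\mathcal{I}$, which you defer as ``the main technical obstacle'' — the paper closes quickly by observing that the restricted Deaconu--Renault groupoid over a single discrete orbit is the \emph{full} equivalence relation on a countably infinite discrete set: aperiodicity rules out two distinct arrows $(x,k,y)$, $(x,k',y)$ with $k\ne k'$ (their existence would produce a periodic point in the orbit), so the groupoid is principal and any two points of the orbit are related, whence its $C^*$-algebra is $\mathcal{K}$. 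This gives $\mathcal{I}\cong\mathcal{K}^{{\bf n}(X,T)}$ (${\bf n}(X,T)$ = number of orbits of $Sp_l(X,T)$), from which $\dim_{\rm nuc}(\mathcal{I})=0$, ${\rm tsr}(\mathcal{I})=1$, ${\rm RR}(\mathcal{I})=0$ and automatic projection lifting are immediate, without needing to build an exhaustion by compact open subgroupoids by hand. Apart from that, the differences are cosmetic: the paper cites Szab\'o--Wu--Zacharias (Corollary 7.6 of \cite{SWZ}) for the quotient's nuclear dimension where you invoke Szab\'o plus Hirshberg--Winter--Zacharias, and cites Brown--Pedersen (\cite{BP}) rather than Lin for the real-rank-zero permanence; both substitutions lead to the same bounds.
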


\begin{proof}
Since $T$ is an aperiodic local homeomorphism on a compact metric space, its Deaconu-Renault groupoid $\mathcal{G}_{(X,T)}$ is a free, $\acute{\rm e}$tale amenable groupoid. Then its groupoid $C^*$-algebra $A$ is unital, separable, and amenable.

Set $Y=X\setminus\bigcup_{\omega\in Sp_l(X,T)}{\rm Orb}_T(\omega)$. It is clear that $Y$ is a compact subset of $X$ since every point in $Sp_l(X,T)$ is isolated in $X$. Moreover, $T|_Y(Y)=Y$ and $T|_Y$ is a homeomorphism on $Y$, since $Y$ contains no points on orbits of any left special elements. Also note that 
\[X=Y\sqcup\bigsqcup_{1\le i\le {\bf n}(X,T)}{\rm Orb}_T(\omega_i),\]
where ${\bf n}(X,T)$ is the number of distinct orbits of left special elements in $X$. As every open (isolated) subset ${\rm Orb}_T(\omega_i)$ corresponds to the full equivalence relation on a countably infinite set, whose $C^*$-algebra is a closed ideal of $A$, isomorphic to the compact operators $\mathcal{K}$, we see that, according to Proposition 4.3.2 in \cite{SW}, there is an exact sequence
\begin{align}\label{exact}
0\longrightarrow\mathcal{K}^{{\bf n}(X,T)}\longrightarrow A\longrightarrow C^*(\mathcal{G}_{(Y,T')})\to 0.
\end{align}
Since $T'$ is a homeomorphism of $Y$ onto itself, we have $C^*(\mathcal{G}_{(Y,T')})\cong C(Y)\rtimes_{T'}\Z$. It is also clear that ${\rm dim}(Y)={\rm dim}(X)$, and that $T'$ is free since $T$ is free. Then by Proposition 2.9 of \cite{WZ} and Corollary 7.6 in \cite{SWZ}, one concludes that
\[{\rm dim}_{\rm nuc}(A)\le {\rm dim}_{\rm nuc}(\mathcal{K}^{{\bf n}(X,T)})+{\rm dim}_{\rm nuc}(C(Y)\rtimes_{T'}\Z)+1\le 6({\rm dim}(X)+1)^2.\]
This verifies \eqref{nucdim}.

As for the real rank of $A$, we note that, according to Corollary 3.16 in \cite{BP}, every projection in $A/\mathcal{K}^{{\bf n}(X,T)}$ lifts to a projection in $A$. Now since ${\rm RR}(\mathcal{K}^{{\bf n}(X,T)})={\rm RR}(A/\mathcal{K}^{{\bf n}(X,T)})={\rm RR}(C^*(\mathcal{G}_{(Y,T')}))=0$ by the assumption, Theorem 3.14 in \cite{BP} then forces ${\rm RR}(A)=0$.

Finally, that ${\rm tsr}(A)\le {\rm tsr}(C^*(\mathcal{G}_{(Y,T')}))+1$ follows from Corollary 4.12 in \cite{Rie}, the fact that ${\rm tsr}(\mathcal{K}^{{\bf n}(X,T)})=1$, and the exact sequence \eqref{exact}.
\end{proof}

\begin{cor}\label{cor332}
Let $X$ be a one-sided subshift with $\sigma(X)=X$. If $\sigma$ is aperiodic and $X$ has finitely many left special elements, then
\[{\rm dim}_{\rm nuc}(\mathcal{O}_X)<\infty.\] 
In particular, this is the case for every aperiodic subshift $X$ with nonsuperlinear-growth complexity.
\end{cor}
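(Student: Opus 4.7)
The proof is essentially an assembly of the machinery developed in the preceding subsections. The plan is to pass from $X$ to its cover $\widetilde{X}$, apply Theorem \ref{shift} to transfer all hypotheses over, and then invoke Proposition \ref{3.2} on $(\widetilde{X},\sigma_{\widetilde{X}})$.

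First I would recall that, by Definition \ref{3.4.3}, the Cuntz-Pimsner algebra is $\mathcal{O}_X = C^*(\mathcal{G}_{\widetilde{X}})$, so the task reduces to bounding the nuclear dimension of the groupoid $C^*$-algebra of the cover system. Theorem \ref{shift} directly supplies everything needed: since $\sigma$ is aperiodic on $X$ and $|Sp_l(X,\sigma)|<\infty$, the cover $(\widetilde{X},\sigma_{\widetilde{X}})$ is a zero-dimensional compact metric space with $\sigma_{\widetilde{X}}$ an aperiodic surjective local homeomorphism, its left special set is finite (with cardinality equal to $|Sp_l(X,\sigma)|$), and every left special element is isolated in $\widetilde{X}$. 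These are precisely the hypotheses of Proposition \ref{3.2} applied to $(\widetilde{X},\sigma_{\widetilde{X}})$.

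Next I would invoke Proposition \ref{3.2} to obtain the numerical bound
\[
{\rm dim}_{\rm nuc}(\mathcal{O}_X) = {\rm dim}_{\rm nuc}(C^*(\mathcal{G}_{\widetilde{X}})) \le 6({\rm dim}(\widetilde{X})+1)^2 = 6,
\]
where the last equality uses ${\rm dim}(\widetilde{X})=0$. This establishes finiteness.

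For the ``in particular'' clause, I would observe that if $X$ is an aperiodic subshift with nonsuperlinear-growth complexity, then Lemma \ref{useful} gives $|Sp_l(X,\sigma)|\le\lceil 2d\rceil<\infty$ where $d=\liminf_{n\to\infty}p_X(n)/n$, so the hypothesis of the main statement is satisfied and the bound above applies. There is no real obstacle here: all the substantive work has already been done in Theorem \ref{shift} (which handles the cover-theoretic translation) and Proposition \ref{3.2} (which contains the extension-by-compacts argument and the Wang-Zacharias/Szabó nuclear dimension estimate). The corollary is purely a matter of chaining these two results together.
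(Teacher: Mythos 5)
Your proof is correct and takes exactly the route the paper itself takes: transfer via Theorem \ref{shift} to the cover system $(\widetilde{X},\sigma_{\widetilde{X}})$, apply Proposition \ref{3.2} (with ${\rm dim}(\widetilde{X})=0$ yielding the bound $6$), and use Lemma \ref{useful} for the nonsuperlinear-growth clause. You have merely spelled out the details that the paper's one-line citation leaves implicit.
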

\begin{proof}
This follows from Theorem \ref{shift},  Proposition \ref{3.2} and Lemma \ref{useful}.
\end{proof}

\section{The Rokhlin dimension of a local homeomorphism}\label{sec:4}

\subsection{The general position of a local homeomorphism}

\begin{lem}[Lemma 3.6, \cite{Lin1}]\label{lm3}
Let $U$ be an open set, and $E\subset X$ be zero-dimensional. Then for any open set $V\supset \overline{U}$, there is an open set $U'$ with $\overline{U}\subset U'\subset V$ such that $E\cap\partial U'=\varnothing$.
\end{lem}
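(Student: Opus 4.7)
The plan is to reduce the lemma to the pointwise property (D3) via a standard compactness argument. For every $x \in \overline{U} \subset V$, I would apply (D3) to the point $x$ with the open neighborhood $V$ to obtain an open set $W_x$ satisfying $x \in W_x \subset V$ and $E \cap \partial W_x = \varnothing$. The family $\{W_x\}_{x \in \overline{U}}$ is then an open cover of the compact set $\overline{U}$, every member of which lies inside $V$ and has boundary disjoint from $E$.

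Next, since $X$ is a compact metric space, $\overline{U}$ is compact, so I would extract a finite subcover $W_{x_1}, \ldots, W_{x_n}$ and put $U' := \bigcup_{i=1}^{n} W_{x_i}$. By construction, $U'$ is open, $\overline{U} \subset U'$ because the $W_{x_i}$ cover $\overline{U}$, and $U' \subset V$ because each $W_{x_i} \subset V$. To verify the boundary condition, I would invoke the elementary set-theoretic inclusion $\partial(A_1 \cup \cdots \cup A_n) \subset \partial A_1 \cup \cdots \cup \partial A_n$, which follows from $\partial A = \overline{A} \setminus {\rm int}(A)$, the distributivity of closure over finite unions, and the containment $\bigcup_i {\rm int}(A_i) \subset {\rm int}(\bigcup_i A_i)$. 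Applied here, this yields $E \cap \partial U' \subset \bigcup_{i=1}^{n} (E \cap \partial W_{x_i}) = \varnothing$, which is exactly what is required.

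There is no real obstacle: the lemma is a routine compactness-driven globalization of the pointwise dimension-theoretic property (D3). The only care needed is in the boundary-of-finite-union inclusion above, which is purely set-theoretic, and in checking that applying (D3) with the ambient open set $V$ (rather than some smaller neighborhood of $x$) still guarantees $W_x \subset V$, which is immediate from the statement of (D3) itself.
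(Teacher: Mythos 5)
Your proof is correct. The paper states Lemma~\ref{lm3} as a citation to Lindenstrauss's work and does not reproduce a proof, so there is nothing to compare against; but the argument you give — apply (D3) at each $x\in\overline{U}$ with ambient open set $V$, extract a finite subcover $W_{x_1},\dots,W_{x_n}$ of the compact set $\overline{U}$, set $U'=\bigcup_i W_{x_i}$, and then use the finite-union boundary inclusion $\partial\bigl(\bigcup_{i}A_i\bigr)\subset\bigcup_i\partial A_i$ — is a standard and valid globalization of the pointwise property (D3). The only step that merits a second look is the boundary inclusion, and it does hold for finite unions: if $x\in\partial\bigl(\bigcup_i A_i\bigr)$ then $x\in\overline{A_j}$ for some $j$ (closure distributes over finite unions) while $x\notin\mathrm{int}\bigl(\bigcup_i A_i\bigr)\supset\mathrm{int}(A_j)$, so $x\in\partial A_j$. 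Compactness of $\overline{U}$ is available because the paper's standing hypothesis is that $X$ is a compact metric space, which is exactly what licenses the finite subcover. No gaps.
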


As a corollary of Lemma \ref{lm3}, we have
\begin{lem}\label{lm3s}
Let $X$ be a zero-dimensional space, $U,V\subset X$ be open sets with $U\subset \overline{U}\subset V$. Then there is $U'$ with $\overline{U}\subset U'\subset V$ such that $\partial U'=\varnothing$, or in other words, $U'$ is clopen.
\end{lem}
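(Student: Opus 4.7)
The plan is to derive this as a direct application of Lemma \ref{lm3} with the zero-dimensional subset $E$ taken to be the whole space $X$. Since $X$ is hypothesized to be zero-dimensional, $E = X$ qualifies as the required zero-dimensional set in the ambient space $X$, so the hypothesis of Lemma \ref{lm3} is satisfied using $U$ and $V$ as given.

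Applying Lemma \ref{lm3} then produces an open set $U'$ with $\overline{U}\subset U'\subset V$ and $E\cap \partial U' = X\cap \partial U' = \varnothing$, which is to say $\partial U' = \varnothing$. To convert this to the clopen conclusion, I would just observe that $\overline{U'} = U' \cup \partial U'$ for any set, so $\partial U' = \varnothing$ forces $\overline{U'} = U'$; combined with the openness of $U'$, this makes $U'$ clopen.

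There is no real obstacle here: the entire content of the statement is the instantiation $E = X$ in Lemma \ref{lm3}, together with the standard identification between ``empty boundary'' and ``clopen.'' The only thing worth double-checking is that ``zero-dimensional space'' as defined in Subsection 2.1 of the paper is compatible with the notion of ``zero-dimensional subset'' required by Lemma \ref{lm3}; this is immediate from property (D1) in Subsection 2.5, which identifies the small inductive dimension with the Lebesgue covering dimension in the separable metric setting.
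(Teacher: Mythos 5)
Your proposal is correct and follows exactly the route the paper intends: the paper itself presents Lemma \ref{lm3s} with the preface ``As a corollary of Lemma \ref{lm3},'' and the intended instantiation is precisely $E = X$, as you describe. Your additional remark identifying ``empty boundary'' with ``clopen'' and the side-check via (D1) that the two notions of zero-dimensionality agree (in the separable metric setting, which is the standing assumption) are both correct and fill in what the paper leaves implicit.
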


\begin{thm}\label{lm4}
Let $X$ be a compact metric space with ${\rm dim}(X)=d<\infty$ and $T: X\to X$ an aperiodic local homeomorphism such that $T$ maps zero-dimensional sets to zero-dimensional sets.

Let $U\subset X$ be an open set. Then for every $k\in\N$ and every open $V$ with $\partial U\subset V$, there is an open set $U'$ with
\[U\subset U'\subset U\cup V\ \ {\rm and}\ \ \partial U'\subset  V\]
such that the family
\[\F(U',k)\stackrel{\triangle}{=}\{T^{-i}(\partial U'): i=0,1,\cdots k-1\}\]
is in general position.
\end{thm}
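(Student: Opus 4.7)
My plan is to construct $U'$ by iteratively perturbing the boundary of $U$ within $V$ using Lemma~\ref{lm3}, in combination with the iterated decomposition of $X$ afforded by (D4), so that $\partial U'$ avoids a collection of zero-dimensional subsets whose absence forces the general position condition. The induction would be on the covering dimension $d=\dim(X)$. The trivial base case $d=0$ is handled by Lemma~\ref{lm3s}: since $\partial U\subset V$, one has $\overline U\subset U\cup V$, so Lemma~\ref{lm3s} produces a clopen $U'$ with $U\subset U'\subset U\cup V$; then $\partial U'=\varnothing\subset V$ and each $T^{-i}(\partial U')=\varnothing$, satisfying general position vacuously.

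For the inductive step, by (D4) I would pick a zero-dimensional $F_\sigma$ subset $E_0\subset X$ with $\dim(X\setminus E_0)=d-1$ and form its dynamical enlargement
\[
Z_0=\bigcup_{i=0}^{k-1}T^i(E_0).
\]
Using the standing hypothesis that $T$ sends zero-dimensional sets to zero-dimensional sets, together with (D2) applied to the closed pieces of the $F_\sigma$ set $E_0$, the set $Z_0$ is zero-dimensional and $F_\sigma$ in $X$. Applying Lemma~\ref{lm3} to $U$ inside $U\cup V$ with $E=Z_0$ (noting $\overline U\subset U\cup V$), I would obtain an intermediate open $U^*$ with $U\subset U^*\subset U\cup V$, $\partial U^*\subset V$, and $\partial U^*\cap Z_0=\varnothing$; equivalently, $T^{-i}(\partial U^*)\subset X\setminus E_0$ for every $i\in\{0,\dots,k-1\}$, a set of dimension $d-1$. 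A careful reformulation of the statement compatible with the non-compact (only $G_\delta$) subspace $X\setminus E_0$ would then let me iterate, producing pushes $U^*=U^{(0)}\subset U^{(1)}\subset\cdots\subset U^{(d)}=U'$ that achieve $\partial U^{(j)}\cap Z_j=\varnothing$ for analogous sets $Z_j$ built from successive applications of (D4), while preserving the earlier avoidance conditions by carrying out each new push strictly inside an open neighborhood of the current boundary contained in $V\setminus(Z_0\cup\cdots\cup Z_{j-1})$.

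Once all $d$ iterations are complete, every point in any intersection $\bigcap_s T^{-i_s}(\partial U')$ must lie outside every $E_j$, hence in the zero-dimensional remainder $X\setminus(E_0\cup\cdots\cup E_{d-1})$; the transversality secured at each stage should then yield $\dim\bigl(\bigcap_s T^{-i_s}(\partial U')\bigr)\le d-m$ for any subfamily of size $m\le d$. I expect two main technical obstacles. First, the $E_j$ for $j\ge1$ are $F_\sigma$ only in their residual subspaces $X\setminus(E_0\cup\cdots\cup E_{j-1})$ rather than in $X$, and their unions with $Z_0$ need not be zero-dimensional in $X$ (as can be seen already in $X=[0,1]$), so combining the push-in conditions into a single application of Lemma~\ref{lm3} is impossible; one must instead argue that the inductive push-ins preserve every prior condition. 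Second, the extremal case $m=d+1$ (which arises only when $k>d$) demands an \emph{empty} intersection, which is not automatic from dimension bounds and would require exploiting the aperiodicity of $T$ at the level of orbits meeting $\partial U'$, presumably via one additional push-in step designed to rule out any $(d+1)$-fold coincidence of an orbit with $\partial U'$.
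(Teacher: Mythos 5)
Your proposal takes a genuinely different route from the paper's (you induct on $d=\dim X$; the paper inducts on $k$, with the trivial base case of an empty family), and you are candid that two obstacles remain. Unfortunately both are fatal, not technicalities. The decisive one is the emptiness requirement for subfamilies of size $m>d$. Forcing $\partial U'$ (and hence each $T^{-i}(\partial U')$) into the zero-dimensional remainder $X\setminus(E_0\cup\cdots\cup E_{d-1})$ only yields $\dim\bigl(\bigcap_{s\in S}T^{-s}(\partial U')\bigr)\le 0$, never the emptiness that general position demands once $|S|\ge d+1$. Concretely, with $d=1$ and $k=2$: even if $\partial U'=\{p,q\}$ is finite and zero-dimensional, nothing in your construction rules out $T(p)=q$, in which case $p\in\partial U'\cap T^{-1}(\partial U')$, violating general position. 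The obstruction is not dimensional at all — it is a statement about orbits meeting $\partial U'$ too often — and the ``additional push-in step exploiting aperiodicity'' would have to delete precisely those boundary points whose orbit meets $\partial U'$ at least $d+1$ times, a set that depends on $\partial U'$ itself; there is no mechanism in Lemma~\ref{lm3} (or Lemma~\ref{lm3s}) for such a self-referential perturbation while keeping $\partial U'$ the boundary of an open set pinched between $U$ and $U\cup V$.

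The second obstacle you flag is also genuine and is where your scheme would need all of its strength. After the first push, $\partial U^{(1)}$ avoids the $F_\sigma$ set $Z_0$; but $X\setminus Z_0$ is only $G_\delta$, so a compact boundary can have distance $0$ from $Z_0$, and the next application of Lemma~\ref{lm3} (which enlarges the open set) can easily land new boundary points back on $Z_0$. You cannot combine the $E_j$ into a single $E$, as you correctly observe; but you also cannot appeal to Lemma~\ref{lm3} with $E=Z_j$ for $j\ge 1$ as stated, because $E_j$ is $F_\sigma$ only in the $G_\delta$ remainder and need not be $\sigma$-compact in $X$ — and $\sigma$-compactness of $E$ is exactly what the paper is careful to secure each time it invokes Lindenstrauss's lemma. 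The paper's proof avoids all of this: it never tries to confine $\partial U'$ to a zero-dimensional residual; instead, with the family $\tilde{\F}_i$ already in general position, it applies (D4) \emph{inside each intersection} $\cap S$ to obtain a $\sigma$-compact zero-dimensional $Y_S\subset\cap S$ with $\dim(\cap S\setminus Y_S)\le\dim(\cap S)-1$, then makes the newly added boundary piece $\partial O$ avoid $\bigcup_{S,j}T^j(Y_S)$. That is what produces the per-intersection dimension drop, including the required emptiness past depth $d$, and it is the idea your plan is missing.
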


\begin{proof}
We prove the lemma by an induction on $k$. First, if $k=0$, we simply take $U'=U$. 

Now we assume that the lemma holds for some $k\ge0$. It suffices to prove that it also holds for $k+1$. Take arbitrarily open sets $U,V$ with $\partial U\subset V$. Denote
\[M=\sup\{d(x, X\setminus V): x\in V\}.\]
As $X$ is a compact space and $V$ is open, $M$ is a well-defined positive number. For any integer $r$, define
\[V_r=\left\{x\in X: d(x,X\setminus V)\ge \frac{M}{r}\right\}.\]
Then $V_r$ is a compact subset of $X$. It is also clear that $V=\bigcup_{r\ge0}V_r$ since $V$ is open. By the induction hypothesis, we can choose an open set $W_0\subset X$ with
\[U\subset W_0\subset U\cup V\ \ {\rm and}\ \ \partial W_0\subset V\]
such that 
\[\F(W_0, k)=\{T^{-i}(\partial W_0): i=0,1,\cdots,k-1\}\]
is in general position.

Since $V$ is open, for every $x\in \partial W_0\subset V$, we can choose a positive number $\delta_x>0$ such that $\overline{B(x,\delta_x)}\subset V$ and the sets
\[B(x,\delta_x), T^{-1}(B(x,\delta_x)),\cdots,T^{-k}(B(x,\delta_x))\]
are pairwise disjoint. Note that this could be done due to the aperiodicity of $T$. Denote
\[B_x=B(x,\delta_x/2)\ \ {\rm and}\ \ \hat{B}_x=B(x,\delta_x).\]
Set
\[C_r=\left\{x\in\partial W_0: \frac{M}{r+1}\le d(x, X\setminus V)\le \frac{M}{r}\right\}.\]
Every $C_r$ is obviously a compact subset of $\partial W_0$ and $\{B_x\}_{x\in C_r}$ forms an open cover of $C_r$. Choose a finite number of $B_x$ with $x\in C_r$ that cover $C_r$. Also note that $\partial W_0=\bigcup_{r\ge 1} C_r$. This gives us a countable subcover of $\{B_x\}_{x\in \partial W_0}$, which we denote by $B_i\,(i\ge1)$, that covers $\partial W_0$. Besides, for every $x\in C_r$, the open sets $B_x$ used to form a finite subcover of $C_r$ all have diameter no more that $M/r$, since the distance of every point in $C_r$ to $X\setminus V$ is no more than $M/r$. This follows that
\[\lim_{i\to\infty}{\rm diam}(B_i)=0.\]
Furthermore, for every $l\ge0$, we can find a sufficiently large integer $N_l\in\N$ such that for every $x\in C_{N_l}$, $d(x, V_l)\ge 4M/N_l$. Since the diameter of every $\hat{B}_i$ of the form $\hat{B}_x$ for which $x\in C_{N_l}$ has diameter no more than $2M/N_l$, this means that there exists $r_l\in\N$ for which
\[V_l\cap\bigcap_{i\ge r_l}\hat{B}_i=\varnothing.\]
Now we will recursively construct a sequence of open sets $\{W_i\}_{i\ge0}$ starting from $W_0$ defined before, satisfying the following properties.

(P1) $W_i\subset W_0\cup \bigcup_{j\ge0}B_j$;

(P2) $W_{i-1}\subset W_i$;

(P3) $\partial W_i\subset V$;

(P4) $W_i\setminus \hat{B}_i=W_{i-1}\setminus \hat{B}_i$;

(P5) The family 
\[\F(W_i, k)\cup\left\{T^{-k}\left(\partial W_i\cap \bigcup_{1\le j\le i} B_j\right)\right\}\]
is in general position.

We now show that, the lemma follows once we are given this sequence. For this, define
\[U'=\bigcup_{i\ge 0}W_i.\] 
By (P2), we see that $U\subset W_0\subset\bigcup_{i\ge0}W_i=U'$. By (P1), we have
\[U'\subset W_0\cup\bigcup_{j\ge0}B_j\subset W_0\cup V\subset U\cup V\cup V=U\cup V.\]
This verifies $U\subset U'\subset U\cup V$. We now claim that $\partial U'\subset V$. Take arbitrarily $x\in \partial U'$. Then either it is in $\partial W_i$ for some $i$, or is the limit of a sequence $(x_l)_{l\ge1}$ with $x_l\in W_{n_l}\setminus W_{n_l-1}$ with $n_l$ strictly increasing. If the latter holds, since $W_i\setminus W_{i-1}\subset \hat{B}_i$ by (P4), we then have
\[x\in \bigcap_{r\ge0}\overline{\bigcup_{l\ge r}\hat{B}_{n_l}}.\]
In fact, this will follows that $x\in \partial W_0$. If not, since $\partial W_0$ is compact, we can find some small $\eta>0$, such that
\[\inf\{d(x, y): y\in\partial W_0\}>\eta.\]
We can then choose a sufficiently large $R$ such that $\hat{B}_{n_l}$'s have radii no more than $\eta/100$ for all $l\ge R$. Since $x\in\overline{\bigcup_{l\ge R}\hat{B}_{n_l}}$, $x$ is a limit point of $\bigcup_{l\ge R}\hat{B}_{n_l}$, therefore one can take $y\in \hat{B}_{n_L}$ with $d(x,y)<\eta/100$ for some $L\ge R$. Note that the center of $\hat{B}_{n_L}$ is in $\partial W_0$ and the radius of $\hat{B}_{n_L}$ is no more than $\eta/100$ as assumed, this will follows
\[d(x,\partial W_0)\le \eta/50,\]
a contradiction. Consequently, $x\in \partial W_0$, which implies
\[\partial U'\subset \bigcup_{i\ge 0}\partial W_i\cup\partial W_0\subset V,\]
where the second inclusion follows from (P3).

Now we shall verify that 
\[\F(U', k+1)=\{\partial U', T^{-1}(\partial U'), \cdots, T^{-k}(\partial U')\}\]
is in general position. Now according to (P4), we have, for every $r<r'$,
\begin{align*}
 \textstyle W_{r'}\setminus\bigcup_{i\ge r+1}\hat{B}_i\subset W_{r'}\setminus \hat{B}_{r'}=W_{r'-1}\setminus \hat{B}_{r'}\subset W_{r'-1},
\end{align*}
and therefore $\textstyle W_{r'}\setminus \bigcup_{i\ge r+1}\hat{B}_i\subset W_{r'-1}\setminus \bigcup_{i\ge r+1}\hat{B}_i$. On the other hand, we know from (P2) that, $W_{r'-1}\subset W_{r'}$, which deduces that
\[\textstyle W_{r'}\setminus \bigcup_{i\ge r+1}\hat{B}_i=W_{r'-1}\setminus \bigcup_{i\ge r+1}\hat{B}_i.\]
By applying this procedure recursively on $r'-1, r'-2,\cdots, r+1$, we get
\[\textstyle W_{r'}\setminus \bigcup_{i\ge r+1}\hat{B}_i=W_r\setminus \bigcup_{i\ge r+1}\hat{B}_i,\]
and so $U'\setminus \bigcup_{i\ge r+1}\hat{B}_i=W_r\setminus \bigcup_{i\ge r+1}\hat{B}_i$ for every $r$. By the definition of $B_i$'s, for every $l\ge1$, we can choose a sufficiently large $r$, such that for every $i\ge r$, $\hat{B}_{i}\cap V_{l+1}=\varnothing$, which implies that 
\[U'\cap V_{l+1}=W_r\cap V_{l+1}.\]
Note that $V_l\subset (V_{l+1})^\circ$, one concludes that $V_l\cap\partial U'=V_l\cap\partial W_r$. We can take $r$ large enough with $V_l\subset \bigcup_{1\le i\le r}B_i$. By (P5), we know that
\[\textstyle\left\{\partial W_r, T^{-1}(\partial W_r), \cdots, T^{-(k-1)}(\partial W_r), T^{-k}\left(\partial W_r\cap \bigcup_{1\le j\le r}B_j\right)\right\}\]
is in general position. Also note that the same is true if we replace each set of the above family with a smaller one, therefore
\begin{align*}
\textstyle&\left\{V_l\cap\partial W_r, T^{-1}(V_l\cap\partial W_r), \cdots, T^{-(k-1)}(V_l\cap\partial W_r), T^{-k}\left(V_l\cap\partial W_r\right)\right\}\\
=&\left\{V_l\cap\partial U', T^{-1}(V_l\cap\partial U'), \cdots, T^{-(k-1)}(V_l\cap\partial U'), T^{-k}\left(V_l\cap\partial U'\right)\right\}
\end{align*}
is in general position. Since this holds for every $l$, and since $T$ is continuous and $V_l\cap\partial U'$ is closed, $T^{-i}(V_l\cap\partial U')$ is also closed. Then  according to the fact that $X$ is zero-dimensional and $V=\bigcup_{l\ge0}V_l$, one sees that the family
\[\textstyle\left\{V\cap\partial U', T^{-1}(V\cap\partial U'), \cdots, T^{-(k-1)}(V\cap\partial U'), T^{-k}\left(V\cap\partial U'\right)\right\}\]
is in general position. On the other hand, we have mentioned that $\partial U'\subset V$, hence is the above family is the same as $\F(U',k+1)$. This finishes the verification of the claim that $\F(U',k+1)$ is indeed in general position.

Now it suffices to show how to construct $W_{i+1}$ from $W_i$, so that the proof will then be completed. For every finite subfamily $\textstyle S\subset \F(W_i, k)\cup\{T^{-k}(\partial W_i\cap \bigcup_{1\le j\le i} B_j)\}\stackrel{\triangle}{=}\tilde{\F}_i$ for which $\cap S\ne\varnothing$, according to (D4) in Sect.\ref{sec:2}, there is a zero-dimensional set $Y_S\subset \cap S$, which is $F_\sigma$ in $\cap S$ such that
\[{\rm dim}((\cap S)\setminus Y_S)={\rm dim}(\cap S)-1.\]
Note that, if ${\rm dim}(X)=0$, we can just simply set $Y_S=\cap S$.
Also note that since $\cap S$ is $\sigma$-compact, and every $F_\sigma$ subset of a $\sigma$-compact space is still $\sigma$-compact, we see that $Y_S$ is $\sigma$-compact. Define
\[E=\bigcup_{S\subset \tilde{\F}_i, 0\le j\le k}T^{j}(Y_S)\]
where $S$ is taken over all finite subfamilies of $\tilde{\F}_i$ for which $\cap S\ne\varnothing$. Now we can see that, for every $S\subset \tilde{\F}_i$ and $j\in\{0,1,\cdots,k\}$, $T^{j}(Y_S)$ is $\sigma$-compact and zero-dimensional, as $T$ is continuous and maps zero-dimensional sets to zero-dimensional sets. Since $\tilde{F}_i$ is a finite family, $E$ is a finite union of zero-dimensional $\sigma$-compact sets, which follows that $E$ itself is both $\sigma$-compact and zero-dimensional. By Lemma \ref{lm3}, there is an open set $O$ with $\overline{B_{i+1}\cap W_i}\subset O$ such that
\[\overline{O}\subset \hat{B}_{i+1}\cap\left(U\cup\bigcup_{j\ge0}B_j\right)\cap V\ \ {\rm and}\ \ E\cap\partial O=\varnothing.\]
Now we define
\[W_{i+1}=W_i\cup O.\]
It is then immediate that $W_{i+1}$ satisfies (P1)--(P4), and it only remains to be shown that $W_{i+1}$ satisfies (P5) as well.

For this, let us assume by contradiction that (P5) is not satisfied. Then there exists a subfamily $S\subset \tilde{F}_{i+1}$ of cardinality $m$ such that 
\[{\rm dim}(\cap S)>\max\{-1,d-m\}.\]
Write $S=\{S_1,S_2,\cdots, S_m\}$. Then each $S_l$ is an element of the form $T^{-j_l}(\partial W_{i+1})$ for some $0\le j_l\le k-1$ or else $T^{-k}(\partial W_{i+1}\cap\bigcup_{1\le j\le i+1}B_j)$. Note that $j_l\ne j_{l'}$ for $l\ne l'$, since $S_l$'s are distinct elements. Now from $W_{i+1}=W_i\cup O$, we have
\[\partial W_{i+1}\subset (\partial W_i\setminus O)\cup\partial O.\]
Therefore, for any $l$ with $j_l\le k-1$, one has
\[S_l\subset T^{-j_l}(\partial W_i)\cup T^{-j_l}(\partial O).\]
For the case of $j_l=k$, we also have the following similar inclusion
\begin{align*}
S_l&=T^{-k}\left(\partial W_{i+1}\cap\bigcup_{1\le j\le i+1}B_j\right)\\
&\subset T^{-k}\left((\partial W_i\setminus O)\cap\bigcup_{1\le j\le i+1}B_j\right)\cup T^{-k}(\partial O)\\
&\xlongequal[]{\overline{B_{i+1}\cap W_i}\subset O} T^{-k}\left((\partial W_i\setminus O)\cap\bigcup_{1\le j\le i}B_j\right)\cup T^{-k}(\partial O)\\
&\subset T^{-k}\left((\partial W_i)\cap\bigcup_{1\le j\le i}B_j\right)\cup T^{-k}(\partial O).
\end{align*}
Combining these two cases, we see that $S_l$ is a subset of $S_l^0\cup S_l^1$, where $S_l^1=T^{-j_l}(\partial O)$ and 
\begin{equation}
S_l^0=
\begin{cases}
T^{-j_l}(\partial W_i),\ \ &0\le j_l\le k-1,\\
T^{-k}((\partial W_i)\cap\bigcup_{1\le j\le i}B_j),\ \ &j_l=k.
\end{cases}
\end{equation}
Thus $\cap S$ is a subset of the following finite union
\[\cap S\subset \bigcup_{\af\in\{0,1\}^m}\left(\bigcap_{1\le l\le m}S_l^{\af_l}\right).\]
Since ${\rm dim}(\cap S)>\max\{-1,d-m\}$, at least one of these intersections has dimension larger than $\max\{-1,d-m\}$. However, since $\overline{O}\subset \hat{B}_{i+1}$ and $\hat{B}_{i+1}, T^{-1}(\hat{B}_{i+1}), \cdots, T^{-k}(\hat{B}_{i+1})$ are pairwise disjoint by choice of $\delta_x$'s, any index $\af$ with at least two $\af_l$'s being $1$ satisfies that $\bigcap_{1\le l\le m}S_l^{\af_l}=\varnothing$. On the other hand, for $\af=(0,0,\cdots,0)$, each $S_l^{\af_l}$ is a distinct element of the family $\tilde{\F}_i$, and since this family is in general position by assumption, we see that
\[{\rm dim}\left(\bigcap_{1\le l\le m}S_l^0\right)>\max\{-1,d-m\}.\]
Note that it has nothing to do with the order of the $S_l$'s, that is, we may assume without loss of generality that
\[{\rm dim}\left(S_1^1\cap\bigcap_{2\le l\le m}S_l^0\right)>\max\{-1,d-m\},\]
but this will also lead to a contradiction. To see this, denote $\hat{S}=\{S_2^0,\cdots, S_m^0\}$. From $\hat{S}\subset\tilde{\F}_i$, we know that
\[{\rm dim}(\cap \hat{S}\setminus Y_{\hat{S}})\le {\rm dim}(\cap\hat{S})-1\le \min\{-1,d-m\}.\]
On the other hand, since $E\cap\partial O=\varnothing$ and $T^{j_1}(Y_{\hat{S}})\subset E$, we have $T^{j_1}(Y_{\hat{S}})\cap\partial O=\varnothing$, or in other words,  $T^{-j_1}(\partial O)\cap Y_{\hat{S}}=\varnothing$ and hence $T^{-j_1}(\partial O)\subset X\setminus Y_{\hat{S}}$. Therefore, 
\[S_1^1\cap \bigcap_{2\le l\le m}S_l^0\subset \cap\hat{S}\setminus Y_{\hat{S}},\]
which immediately follows that
\[\max\{-1,d-m\}<{\rm dim}\left(S_1^1\cap \bigcap_{2\le l\le m}S_l^0\right)\le {\rm dim}(\cap\hat{S}\setminus Y_{\hat{S}})\le \max\{-1,d-m\}.\]
a contradiction. The proof is now completed.
\end{proof}

\subsection{Technical lemmas}

\begin{lem}\label{lm5}
Let $X$ be a compact metric space, $T: X\to X$ be a continuous map and $E\subset X$ a closed subset. If $E\cap T^{-1}(E)=\varnothing$, then there is $\rho>0$ such that
\[\overline{B_\rho(E)}\cap T^{-1}(\overline{B_\rho(E)})=\varnothing.\]
\end{lem}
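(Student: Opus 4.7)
The plan is to argue by contradiction, exploiting compactness of $X$ together with continuity of $T$ and of the distance function $d(\cdot, E)$. First I would observe that since $T$ is continuous and $E$ is closed, the preimage $T^{-1}(E)$ is also a closed subset of $X$, so the hypothesis $E \cap T^{-1}(E) = \varnothing$ says that we have two disjoint closed sets in a compact space.

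Suppose no such $\rho$ exists. Then for every $n \ge 1$, taking $\rho = 1/n$ produces a point $x_n$ with $x_n \in \overline{B_{1/n}(E)}$ and $T(x_n) \in \overline{B_{1/n}(E)}$. Since $\overline{B_{1/n}(E)} \subset \{y \in X : d(y,E) \le 1/n\}$ by continuity of $d(\cdot, E)$, this gives $d(x_n, E) \le 1/n$ and $d(T(x_n), E) \le 1/n$.

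Next, I would use compactness of $X$ to extract a subsequence $x_{n_k} \to x$ for some $x \in X$. Continuity of $d(\cdot, E)$ together with $d(x_{n_k}, E) \le 1/n_k \to 0$ yields $d(x, E) = 0$, so $x \in \overline{E} = E$. Continuity of $T$ gives $T(x_{n_k}) \to T(x)$, and the analogous estimate $d(T(x_{n_k}), E) \le 1/n_k \to 0$ gives $d(T(x), E) = 0$, hence $T(x) \in E$. Therefore $x \in E \cap T^{-1}(E)$, contradicting the hypothesis.

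There is essentially no obstacle here; the only mild care needed is the inclusion $\overline{B_\rho(E)} \subset \{y : d(y,E) \le \rho\}$, which is immediate from continuity of the distance function. The whole argument is a standard compactness-plus-continuity trick, and I would expect it to take only a few lines to write out.
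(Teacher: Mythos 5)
Your proof is correct and follows essentially the same compactness-and-continuity argument as the paper: negate the claim, extract a point whose image and itself both lie in $\overline{B_{1/n}(E)}$, pass to a convergent subsequence, and derive a contradiction. The only cosmetic difference is that you use continuity of $d(\cdot,E)$ to land the limit in $E$, whereas the paper constructs auxiliary sequences $x_n', y_n' \in E$ within distance $1/n$ and uses closedness of $E$ directly; the two steps are interchangeable.
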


\begin{proof}
Suppose by contradiction that for any $n>0$, there is $x_n, y_n\in\overline{B_{1/n}(E)}$ with 
\[x_n=T(y_n).\]
Since $X$ is compact, by passing to a subsequence, we may assume without loss of generality that $x_n\to x$ and $y_n\to y$ as $n\to\infty$ for some $x,y\in X$. As $E$ is compact, for any $n\ge1$, we can take $x'_n,y'_n\in E$ with 
\[d(x_n,x'_n)\le\frac{1}{n}\ \ {\rm and}\ \ d(y_n,y'_n)\le\frac{1}{n}.\]
This follows that $x'_n\to x$ and $y'_n\to y$ as $n\to\infty$. Since $E$ is closed, we have $x,y\in E$. Also note that, by the continuity of $T$, $x=T(y)$. This contradicts the assumption that $E\cap T^{-1}(E)=\varnothing$.
\end{proof}

Although the following Lemma is quite simple, we still decide to mention it, since one should be careful when dealing with continuous maps which are not homeomorphisms.

\begin{lem}\label{lmc}
Let $X$ be a compact metric space and $T:X\to X$ be continuous. Then for every $A\subset X$, $T(\overline{A})=\overline{T(A)}$.

For any $m,n\in\N$ and $A\subset X$, one has
\[T^{-m-n}(A)=T^{-n}(T^{-m}(A)),\]
and if $T$ is surjective, then
\[T^m(T^{-n}(A))=T^{m-n}(A).\]
\end{lem}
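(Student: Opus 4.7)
The lemma breaks into three independent statements, so I will plan one paragraph per identity.

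For the first identity $T(\overline{A})=\overline{T(A)}$, I will use compactness of $X$ in an essential way. Since $\overline{A}$ is closed in the compact space $X$, it is compact, so $T(\overline{A})$ is a continuous image of a compact set, hence compact and therefore closed. From $T(A)\subset T(\overline{A})$ and the fact that the right-hand side is closed I obtain $\overline{T(A)}\subset T(\overline{A})$. For the reverse inclusion, continuity gives that $T^{-1}(\overline{T(A)})$ is closed, and it contains $A$, so it contains $\overline{A}$; applying $T$ yields $T(\overline{A})\subset \overline{T(A)}$.

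For the second identity $T^{-m-n}(A)=T^{-n}(T^{-m}(A))$, no topology is needed; the argument is a direct chase of definitions using $T^{m+n}=T^{m}\circ T^{n}$. Explicitly, $x\in T^{-n}(T^{-m}(A))$ iff $T^n(x)\in T^{-m}(A)$ iff $T^m(T^n(x))\in A$ iff $T^{m+n}(x)\in A$ iff $x\in T^{-(m+n)}(A)$.

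For the third identity $T^m(T^{-n}(A))=T^{m-n}(A)$ under surjectivity, I will interpret $T^{m-n}$ as $T^{-(n-m)}$ when $m<n$ and split into two cases. If $m\ge n$, write $T^m(T^{-n}(A))=T^{m-n}\bigl(T^n(T^{-n}(A))\bigr)$; surjectivity of $T$ (hence of $T^n$) gives $T^n(T^{-n}(A))=A$, and the identity follows. If $m<n$, the inclusion $T^m(T^{-n}(A))\subset T^{-(n-m)}(A)$ is automatic from $T^n=T^{n-m}\circ T^m$: any $x=T^m(y)$ with $y\in T^{-n}(A)$ satisfies $T^{n-m}(x)=T^n(y)\in A$. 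For the reverse inclusion I will invoke surjectivity of $T^m$ to produce, for each $x$ with $T^{n-m}(x)\in A$, some preimage $y$ with $T^m(y)=x$; then $T^n(y)=T^{n-m}(x)\in A$, so $y\in T^{-n}(A)$ and $x\in T^m(T^{-n}(A))$.

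There is no genuine obstacle in this lemma; everything is either one application of compactness or a straightforward set-theoretic verification. The only point that deserves explicit care is the convention that $T^{m-n}$ with $m<n$ denotes the preimage operator $T^{-(n-m)}$, and the fact that surjectivity is needed precisely to cancel in $T^k(T^{-k}(A))=A$, as this equality fails in general without it.
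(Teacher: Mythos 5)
Your proposal is correct and follows essentially the same route as the paper: compactness for one inclusion of the first identity, continuity for the other, definitional unwinding for the second, and a case split on $m\ge n$ versus $m<n$ with surjectivity invoked to produce preimages in the third. The only cosmetic difference is that in the $m\ge n$ case you factor through the equality $T^n(T^{-n}(A))=A$, whereas the paper chases elements directly; both amount to the same use of surjectivity.
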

\begin{proof}
Since $X$ is compact and $\overline{A}$ is closed, $\overline{A}$ is compact, which follows that $T(\overline{A})$ is compact, and hence closed. Then from $T(A)\subset T(\overline{A})$, we have $\overline{T(A)}\subset T(\overline{A})$. Conversely, that $T(\overline{A})\subset \overline{T(A)}$ follows directly from the continuity of $T$.

Let $x\in T^{-m-n}(A)$. This is equivalent to $T^m(T^n(x))=T^{m+n}(x)\in A$, which is also equivalent to $T^n(x)\in T^{-m}(A)$, and consequently $x\in T^{-n}(T^{-m}(A))$. 

Now we consider the last equality. 

$\bullet$ If $m\ge n$: Let $x\in T^{m-n}(A)$. Then we can find $y\in A$ with $x=T^{m-n}(y)$. Take $z\in T^{-n}(\{y\})\subset T^{-n}(A)$ since $T$ is surjective. Then $x=T^{m-n}(y)=T^{m-n}(T^n(z))=T^m(z)$, that is, $x\in T^m(T^{-n}(A))$. Conversely, let $x\in T^m(T^{-n}(A))$. Then there is $z\in T^{-n}(A)$ with $x=T^m(z)$. Write $y=T^n(z)\in A$. We then have $x=T^{m-n}(y)\in T^{m-n}(A)$;

$\bullet$ If $m<n$: Let $x\in T^{m-n}(A)$. This is equivalent to $T^{n-m}(x)\in A$. Find an element $z\in T^{-m}(\{x\})\subset T^{-m}(T^{m-n}(A))=T^{-n}(A)$. Then one has $x=T^m(z)\in T^m(T^{-n}(A))$. Conversely, let $x\in T^m(T^{-n}(A))$. Take $z\in T^{-n}(A)$ with $x=T^m(z)$. We then have that $T^{n-m}(x)=T^{n-m}(T^m(z))=T^n(z)\in A$. The proof is finished.
\end{proof}

%\begin{lem}\label{lm6}
%Let $X$ be a compact metric space and $T: X\to X$ be an open map. Then for any open set $O\subset X$,
%\[T^{-1}(\overline{O})=\overline{T^{-1}(O)}.\]
%\end{lem}

%\begin{proof}
%Let $x\in T^{-1}(\overline{O})$. Then there is $y\in\overline{O}$ with $T(x)=y$. Let $U$ be any open set with $x\in U$. Since $T$ is an open map, $T(U)$ is open and $y=T(x)\in T(U)$. Also note that since $y\in\overline{O}$, we can find $y'\in O$ such that $y'\in T(U)$. Then we can find $x'\in U$ with $y'=T(x')$. One sees that $x'\in T^{-1}(y')\subset T^{-1}(O)$, which establishes the inclusion
%\[T^{-1}(\overline{O})\subset \overline{T^{-1}(O)}.\]
%Conversely, let $x\in \overline{T^{-1}(O)}$. Then we may find $x_n\in T^{-1}(O)$ with $\lim_{n\to\infty}x_n=x$. This follows that $\lim_{n\to\infty}T(x_n)=T(x)$. Note that $T(x_n)\in O$ and therefore $T(x)\in\overline{O}$. Consequently, 
%\[\overline{T^{-1}(O)}\subset T^{-1}(\overline{O}).\]
%This finishes the proof.
%\end{proof}

\begin{lem}\label{lm7}
Let $X$ be a zero-dimensional compact metric space and $T: X\to X$ an aperiodic local homeomorphism.
Fix $N\in\N$ and $m=2N$. Let $U$ be a clopen set and $V$ be an open set in $X$. If $U,V$ satisfies the following properties:

(1) The sets
\[T^{-2N+1}(\overline{U}), T^{-2N+2}(\overline{U}),\cdots,T^{-N}(\overline{U})\]
are pairwise disjoint and $T^{-i}(T^i(U))=U$ for $i=1,2,\cdots,N-1$;

(2) The sets
\[T^{-2N+1}(T^N(\overline{V})),\cdots,T^{-N}(T^N(\overline{V}))\]
are pairwise disjoint;

(3) There is $\eta>0$, such that for every $x\in\overline{V}$, $y\in B(x,\eta)$ and every $1\le i\le N-1$, 
\[T^{-i}\circ T^{i+N}(y)=\{T^N(y)\}.\]

Then there is a clopen set $W\subset X$ with
\begin{align*}
U\subset W,\ \  V\subset\bigcup_{0\le i\le m-1}T^{-i}(W),\ \ T^{-i}(T^i(W))=W
\end{align*}
for $i=1,2,\cdots, N-1$, and the sets 
\[T^{-2N+1}(\overline{W}), T^{-2N+2}(\overline{W}),\cdots, T^{-N}(\overline{W})\]
are pairwise disjoint.
\end{lem}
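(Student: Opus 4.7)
The plan is to take $W := U \cup A$, where $A$ is a clopen set sandwiched between $T^N(\overline{V'})$ and a carefully engineered open set, with $V' := V \setminus \bigcup_{i=0}^{m-1} T^{-i}(U)$ denoting the still-uncovered portion of $V$. The reason for first cutting $V$ down to $V'$ is that condition~(1) then forces an automatic non-interference between $T^N(\overline{V'})$ and the iterates $T^\ell(U)$ and $T^{-\ell}(U)$ for $1 \le \ell \le N-1$, without which the cross-disjointness in the resulting tower could not be arranged.

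The first substantive step is to establish the non-interference claim: for every $\ell \in \{1,\ldots,N-1\}$,
\[T^N(\overline{V'}) \cap T^\ell(U) = \emptyset \qquad \text{and} \qquad T^N(\overline{V'}) \cap T^{-\ell}(U) = \emptyset.\]
If $T^N(v) = T^\ell(u)$ with $v \in V'$ and $u \in U$, then $T^{N-\ell}(v)$ lies in $T^{-\ell}(\{T^\ell(u)\}) \subset T^{-\ell}(T^\ell(U)) = U$ by condition~(1), contradicting $v \in V'$ since $N-\ell \in \{0,\ldots,m-1\}$; if $T^N(v) \in T^{-\ell}(U)$, then $T^{N+\ell}(v) \in U$, which again contradicts $v \in V'$. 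Both disjointness statements transfer from $V'$ to $\overline{V'}$ because $T^\ell(U)$ and $T^{-\ell}(U)$ are clopen (since $U$ is clopen and $T^\ell$ is a local homeomorphism) and because $T^N(\overline{V'}) = \overline{T^N(V')}$ by Lemma~\ref{lmc}.

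Second, I would apply Lemma~\ref{lm5} to each iterate $T^\ell$ with the closed set $F := T^N(\overline{V})$—the hypothesis $F \cap T^{-\ell}(F) = \emptyset$ being just a reformulation of condition~(2)—and take the minimum over $\ell = 1,\ldots,N-1$ of the resulting radii, producing a single $\rho > 0$ with $\overline{B_\rho(F)} \cap T^{-\ell}(\overline{B_\rho(F)}) = \emptyset$ for each such $\ell$. Then I form the open set
\[C := T^N(B_\eta(\overline{V})) \cap B_\rho(F) \cap \bigcap_{\ell=1}^{N-1}(X \setminus T^\ell(U)) \cap \bigcap_{\ell=1}^{N-1}(X \setminus T^{-\ell}(U)),\]
which is open because $T^N$ is a local homeomorphism and each $T^\ell(U)$, $T^{-\ell}(U)$ is clopen. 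The non-interference claim together with $\overline{V'} \subset B_\eta(\overline{V})$ gives $T^N(\overline{V'}) \subset C$. Since $X$ is zero-dimensional and $T^N(\overline{V'})$ is compact, I would cover $T^N(\overline{V'})$ by finitely many clopen subsets of $C$ and let $A$ be their union, a clopen set satisfying $T^N(\overline{V'}) \subset A \subset C$.

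Finally, setting $W := U \cup A$, I would verify the four required properties. The set $W$ is clopen and contains $U$ by construction. The cover $V \subset \bigcup_{i=0}^{m-1} T^{-i}(W)$ splits as $V \setminus V' \subset \bigcup_i T^{-i}(U) \subset \bigcup_i T^{-i}(W)$ together with $V' \subset T^{-N}(T^N(V')) \subset T^{-N}(A) \subset T^{-N}(W)$. The saturation identity $T^{-i}(T^i(W)) = W$ for $1 \le i \le N-1$ distributes as $T^{-i}(T^i(U)) \cup T^{-i}(T^i(A)) = U \cup A$, using condition~(1) for the first summand and condition~(3) (which, via $A \subset T^N(B_\eta(\overline{V}))$, forces $T^{-i}(T^i(a)) = \{a\}$ for every $a \in A$) for the second. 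The disjointness of $T^{-j}(\overline{W}) = T^{-j}(U) \cup T^{-j}(A)$ for distinct $j_1, j_2 \in \{N,\ldots,2N-1\}$ reduces to four sub-intersections, handled respectively by condition~(1), by the $B_\rho$-thickening (giving $A \cap T^{-\ell}(A) = \emptyset$ for $\ell = |j_2 - j_1| \in \{1, \ldots, N-1\}$), and by the two avoidance conditions on $T^\ell(U)$ and $T^{-\ell}(U)$ built into $C$. The main obstacle is the non-interference claim itself: without the preliminary pruning from $V$ to $V'$, and without the saturation identity of condition~(1), the iterates $T^\ell(\overline{U})$ and $T^N(\overline{V})$ may genuinely overlap and no choice of $A$ would yield the required cross-disjointness.
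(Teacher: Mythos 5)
Your proof is correct, and it follows the same overall skeleton as the paper's: prune $V$ down to $V' = V \setminus \bigcup_{i<m}T^{-i}(U)$ (whose closure is the paper's $R$), apply Lemma~\ref{lm5} to get a $\rho$-thickening of $T^N(\overline{V})$ whose $T^{-\ell}$-iterates stay disjoint, and take $W = U \cup (\text{a clopen neighborhood of }T^N(\overline{V'}))$. The genuine difference is in \emph{where} the avoidance of $T^{\pm\ell}(U)$ is enforced. The paper's proof works on the pre-image side near $R$: it proves two uniform-compactness claims (Claims 1 and 2, via sequence/contradiction arguments) producing a radius $\delta$ so that pushing forward small clopen balls $B(z,\delta)$, $z\in R$, under $T^N$ automatically misses $T^t(\overline U)$ for the relevant $t$. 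You instead work directly on the image side: you first prove a pointwise non-interference claim (which is the algebraic core of the paper's Claims 1--2 but stated for $T^N(\overline{V'})$ rather than for thickened preimages), then intersect with the already-clopen sets $X\setminus T^{\pm\ell}(U)$ to build $C$, and finally use zero-dimensionality to squeeze the clopen $A$ inside $C$. This replaces the paper's two sequential compactness arguments with the observation that $T^{\pm\ell}(U)$ are themselves clopen, so the finite intersection defining $C$ is already open — a cleaner route to the same $W$. One cosmetic remark: you describe $F\cap T^{-\ell}(F)=\varnothing$ as "just a reformulation of condition (2)"; strictly it requires the short intermediate step that any $z\in F\cap T^{-\ell}(F)$ lifts (via $F=T^N(\overline V)$) to some $v\in T^{-N}(F)\cap T^{-(N+\ell)}(F)$, contradicting the pairwise disjointness in (2). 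That derivation is standard and your use of it is correct, but it isn't a literal restatement.
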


\begin{proof}
Define
\[\textstyle R=\overline{V}\setminus\bigcup_{0\le i\le m-1}T^{-i}(U).\]
Then $R$ is clearly a closed set.  Also note that since $R\subset\overline{V}$, the following sets
\[T^{-2N+1}(T^N(R)),\cdots,T^{-N}(T^N(R))\]
are then mutually disjoint. Let $\eta>0$ be the positive number satisfying $(3)$. By Lemma \ref{lm5}, we may take $\rho_1\in(0,\eta)$ such that the sets
\[T^{-2N+1}(\overline{B_{\rho_1}(T^N(R))}),\cdots,T^{-N}(\overline{B_{\rho_1}(T^N(R))})\]
are pairwise disjoint. Since $T$ is continuous, take further $\rho\in(0,\rho_1)\subset(0,\eta)$ such that
\[T^N(\overline{B_\rho(R)})\subset \overline{B_{\rho_1}(T^N(R))}.\]
Note that the following sets
\[T^{-2N+1}(T^N(\overline{B_\rho(R)})),\cdots,T^{-N}(T^N(\overline{B_\rho(R)}))\]
are then mutually disjoint.

{\bf Claim 1.} We now claim that, there exists a positive number  $\delta_1\in(0,\rho)$ such that for every $x\in R$ and $t\in\{N+1,\cdots,2N-1\}$,
\begin{align}\label{eq0}
T^t(\overline{B(x,\delta_1)})\cap\overline{U}=\varnothing.
\end{align}
In fact, if such $\delta$ doesn't exist, then for every $n\ge1$, we can find $x_n\in R$, $y_n\in X$ and $t_n\in\{N+1,\cdots,2N-1\}$ such that
\[y_n\in T^{t_n}(\overline{B(x_n, 1/n)})\cap\overline{U}.\]
By a compactness argument, we may assume $x_n\to x$ as $n\to\infty$ for some $x\in X$. Since $t_n$ takes value in a finite set, we may also assume $t_n=t$ by the Pigeon Principle. Since $R$ is closed, $x\in R$. Find $y'_n\in\overline{B(x_n,1/n)}$ with $y_n=T^t(y'_n)$. Note that $\lim_{n\to\infty}y'_n=x$. This then follows that
\[\lim_{n\to\infty}y_n=T^t(x)\in\overline{U}.\]
Then we have $x\in T^{-t}(\overline{U})\cap R$. However, since $U$ is clopen, one sees that
\begin{align*}
R\cap T^{-t}(\overline{U})\subset T^{-t}(\overline{U})\setminus T^{-t}(U)=T^{-t}(\partial U)=\varnothing
\end{align*}
which is a contradiction. The claim then follows.

{\bf Claim 2.} Similarly, we also claim that, there exists a positive number $\delta_2\in(0,\rho)$ such that for every $x\in R$ and $t\in\{1,\cdots,N-1\}$,
\begin{align}\label{eq0s}
T^N(\overline{B(x,\delta_2)})\cap T^t(\overline{U})=\varnothing.
\end{align}
Assume not, that there are sequences $x_n\in R$ and $y_n'\in \overline{B(x_n,1/n)}$ with $x_n\to x$ as $n\to\infty$ for some $x\in R$ and
\[y_n=T^N(y'_n)\in T^t(\overline{U}).\]
Note that $\lim_{n\to\infty}y'_n=x$, which follows that $\lim_{n\to\infty}y_n=T^N(x)\in T^t(\overline{U})$. Therefore,
\[x\in T^{-N}(T^t(\overline{U}))=\overline{T^{-N}(T^t(U))}=\overline{T^{-N+t}(T^{-t}(T^t(U)))}.\]
Note that the last equality holds because we always have $T^{-n}\circ T^{-m}=T^{-n-m}$ for $n,m<0$. According to the assumption that $T^{-t}(T^t(U))=U$ for all $t\in\{1,\cdots,N-1\}$, we then have $x\in\overline{T^{-N+t}(U)}=T^{-N+t}(\overline{U})$, and consequently, since $U$ is clopen,
\begin{align*}
x\in R\cap T^{-N+t}(\overline{U})\subset T^{-N+t}(\overline{U})\setminus T^{-N+t}(U)=\varnothing.
\end{align*}
This is again a contradiction. The claim is then proved.

Set $\delta=\min\{\delta_1,\delta_2\}\in(0,\rho)$. Then $\{B(z,\delta)\}_{z\in R}$ forms an open cover of $R$. Note that according to Lemma \ref{lm3s}, we may also choose a clopen set $U_z$ with
\[\overline{B(z,\delta/2)}\subset U_z\subset B(z,\delta),\]
and all such $U_z$'s clearly satisfy the above two claims and form an open cover of $R$ as well. Therefore, without loss of generality, we may assume each $B(z,\delta)$ is clopen. Now since $R$ is compact, we may fix a finite subcover
\[\mathcal{U}=\{B(z_i,\delta)\}_{1\le i\le s}.\]
Denote $\overline{\mathcal{U}}=\{\overline{O}: O\in\mathcal{U}\}$, and define
\[C=\bigcup_{E\in\overline{\mathcal{U}}}E.\]
It is then immediate that $R\subset C\subset \overline{B_{\rho}(R)}$ since $0<\delta<\rho$. This follows that the sets
\begin{align}\label{eqc}
T^{-2N+1}(T^N(C)),\cdots,T^{-N}(T^N(C))
\end{align}
are also pairwise disjoint. Now we define
\[W=U\cup\bigcup_{H\in\mathcal{U}}T^N(H).\]
Note that by Lemma \ref{lmc} and since $\mathcal{U}$ is finite, we then have
\[\overline{W}=\overline{U}\cup\bigcup_{E\in\overline{\mathcal{U}}}T^N(E).\]
We shall then check that $W$ satisfies all of the desired properties.

(1) $W$ is clopen: This is because $T^N$ is a local homeomorphism, hence is an open map, $U$ and $H$ are clopen sets, and $\mathcal{U}$ is finite;

(2) $U\subset W$: This is immediate from the construction of $W$;

(3) $V\subset\bigcup_{1\le i\le m}T^{-i}(W)$: Since $T^N(H)\subset W$, we have $H\subset T^{-N}(W)$, and hence 
\[\bigcup_{H\in\mathcal{U}}H\subset \bigcup_{0\le i\le m-1}T^{-i}(W).\]
Consequently, we have
\begin{align*}
V\subset \overline{V}&\subset R\cup\bigcup_{0\le i\le m-1}T^{-i}(U)\\
&\subset\bigcup_{H\in\mathcal{U}}H\cup\bigcup_{0\le i\le m-1}T^{-i}(U)\\
&\subset\bigcup_{H\in\mathcal{U}}H\cup\bigcup_{0\le i\le m-1}T^{-i}(W)=\bigcup_{0\le i\le m-1}T^{-i}(W).
\end{align*}

(4) $T^{-i}(T^i(W))=W$: First, for every $i=1,2\cdots,N-1$, one sees that
\[T^{-i}(T^i(W))=T^{-i}(T^i(U))\cup\bigcup_{H\in\mathcal{U}}T^{-i}(T^{i+N}(H)).\]
By assumption, $T^{-i}(T^i(U))=U$. Now it suffices to show that for every $H\in\mathcal{U}$,
\[T^{-i}(T^{i+N}(H))=T^N(H).\]
It is obvious that $T^N(H)\subset T^{-i}(T^{i+N}(H))$. Conversely, let $y\in H$ arbitrarily. Write $H=B(z,\delta)$ for some $z\in R$. Note that $z\in \overline{V}$ and $\delta<\rho<\eta$, which by hypothesis follows that
\[T^{-i}(T^{i+N}(y))=\{T^N(y)\}.\]
This concludes $T^{-i}(T^{i+N}(H))\subset T^N(H)$, which then establishes the equality.

(5) Now it only remains to be shown that
\[T^{-2N+1}(\overline{W})\cdots, T^{-N}(\overline{W})\]
are pairwise disjoint. For this, we assume by contradiction that this is not the case. Then there are $i,j\in\{1,\cdots,N\}$ with $i<j$ such that there exists $x\in X$ with
\[x\in T^{-2N+i}(\overline{W})\cap T^{-2N+j}(\overline{W})\ne\varnothing.\]
This is equivalent to
\[y_1=T^{2N-i}(x)\in\overline{W}\ \ {\rm and}\ \ y_2=T^{2N-j}(x)\in\overline{W}.\]
Note that we then have $y_1=T^{j-i}(y_2)$. We divide $y_1,y_2\in\overline{W}$ into following cases:

$\bullet$ $y_1\in\overline{U}$ and $y_2\in\overline{U}$: This follows that 
\[x\in T^{-(2N-i)}(\overline{U})\cap T^{-(2N-j)}(\overline{U})\ne\varnothing.\]
Since $-2N+1\le-(2N-i),-(2N-j)\le -N$, this contradicts the assumption on $U$;

$\bullet$ $y_1,y_2\in T^N(E)$ for some $E\in\overline{\mathcal{U}}$:  Note that $E\subset C$, which follows that 
\[x\in T^{-(2N-i)}(T^N(C))\cap T^{-(2N-j)}(T^N(C))\ne\varnothing.\]
This contradicts \eqref{eqc};

$\bullet$ $y_1\in T^N(E_1)$ and $y_2\in T^N(E_2)$ for some $E_1\ne E_2$ in $\overline{\mathcal{U}}$: This follows that
\[x\in T^{-(2N-i)}(T^N(C))\cap T^{-(2N-j)}(T^N(C))\ne\varnothing,\]
which again contradicts \eqref{eqc};

$\bullet$ $y_1\in\overline{U}$ and $y_2\in T^N(E)$ for some $E\in\overline{\mathcal{U}}$: As aforementioned, $y_1=T^{j-i}(y_2)$, which follows that
\[y_1\in T^{j-i}(T^N(E))\cap\overline{U}=T^{N+j-i}(E)\cap\overline{U}\ne\varnothing.\]
Note that $N+j-i\in\{N+1,\cdots,2N-1\}$, therefore this contradicts \eqref{eq0};

$\bullet$ $y_1\in T^N(E)$ and $y_2\in \overline{U}$ for some $E\in\overline{\mathcal{U}}$: This follows that
\[y_1\in T^{j-i}(\overline{U})\cap T^N(E)\ne\varnothing.\]
Note that $j-i\in\{1,\cdots,N-1\}$. Then this contradicts \eqref{eq0s}. 

The proof is now finished.
\end{proof}

%\begin{lem}\label{lmo}
%Let $X$ be a metric space and $T:X\to X$ be an open map. Let $C\subset X$ be a subset with empty interior. Then $T^{-1}(C)$ has empty interior. Besides, for every $\delta>0$, the family
%\[B(x,\delta), x\in X\setminus C\]
%forms an open cover of $X$.
%\end{lem}

%\begin{proof}
%Suppose that $x$ is an interior point of $T^{-1}(C)$. Then there is an open set $U\subset X$ such that
%\[x\in U\subset T^{-1}(C).\]
%Let $V=T(U)$. Since $T$ is an open map, $V$ is open in $X$. Besides, $T(x)\in V\subset C$. This means $T(x)$ is an interior point of $C$, a contradiction.

%Now note that $C$ has empty interior if and only if $X\setminus C$ is dense in $X$. Then every $y\in C$ can be approximated by elements in $X\setminus C$. This follows that, once a positive number $\delta$ is fixed, then every $y\in C$ is contained in a $B(x,\delta)$ for some $x\in X\setminus C$.
%\end{proof}

\subsection{The Rokhlin dimension of an aperiodic local homeomorphism}

\begin{thm}\label{zdr}
Let $X$ be a zero-dimensional compact metric space and $T: X\to X$ an aperiodic surjective local homeomorphism. Suppose that
\[|Sp_l(X,T)|<\infty.\]
Then $(X,T)$ has finite topological Rokhlin dimension. In particular,
\[{\rm dim}_{\rm Rok}(X,T)\le 2|Sp_l(X,T)|+1.\]
\end{thm}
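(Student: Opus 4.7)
The plan is, for every fixed $N \geq 1$, to construct an open $N$-Rokhlin cover of $(X,T)$ comprising at most $2s+2$ towers, where $s := |Sp_l(X,T)|$; by definition of topological Rokhlin dimension this yields $\dim_{\rm Rok}(X,T) \leq 2s+1$. Fix $N$ once and for all and set $m = 2N$ so that Lemma \ref{lm7} applies with parameters $(m, N)$.

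First, I would produce a finite collection of candidate clopen Rokhlin towers of height $m$ that cover $X$. Since $T$ is aperiodic, $X$ is compact and zero-dimensional, and $T$ is a local homeomorphism, for every $x \in X$ one can use local injectivity to choose a clopen neighborhood $U_x$ small enough that $T^{-i}(\overline{U_x})$ are pairwise disjoint for $i = 0, 1, \ldots, m-1$. When $x$ also lies outside $\bigcup_{i=0}^{N-2} T^{-i}(Sp_l(X,T))$, one can additionally arrange the saturation identity $T^{-i}(T^i(U_x)) = U_x$ for $1 \leq i \leq N-1$, which is precisely what is needed to invoke Lemma \ref{lm7}. Compactness of $X$ then yields a finite subcover $U_{x_1}, \ldots, U_{x_r}$.

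Second, I would merge the candidate towers into as few $m$-Rokhlin towers as possible via Lemma \ref{lm7}. Starting from an empty base $U = \varnothing$ and taking $V = U_{x_1}$, apply Lemma \ref{lm7} to obtain a clopen set $W^{(1)}$ whose $m$ preimage levels cover $U_{x_1}$ and still satisfy the disjointness and saturation hypotheses. Inductively, with $W^{(k-1)}$ playing the role of $U$ and the next uncovered candidate playing the role of $V$, produce $W^{(k)}$. The key point is that the saturation identity needed to invoke Lemma \ref{lm7} can fail only when $V$ straddles a preimage of some $\omega_j \in Sp_l(X,T)$; whenever this occurs, instead of merging I start a new $m$-tower. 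Theorem \ref{lm4} is used at each step to arrange the boundaries of the current $W^{(k)}$ in general position relative to the preimages $T^{-i}(\partial W^{(k)})$, $i = 0, 1, \ldots, m-1$, so that the obstruction is genuinely local to $Sp_l(X,T)$ and does not propagate.

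Third, I would analyze the obstructions from $Sp_l(X,T)$ to bound the number of towers. Each left special element $\omega_j$ creates a local branching in the preimage tree up to depth $m$ which, after the general-position adjustment from Theorem \ref{lm4}, splits the process into at most two separate $m$-towers in a neighborhood of its backward orbit. Summing over $j = 1, \ldots, s$, at most $s+1$ $m$-Rokhlin towers are produced. Finally each such $m$-tower $\{W_0, T^{-1}(W_0), \ldots, T^{-(m-1)}(W_0)\}$ of height $m = 2N$ decomposes into two $N$-Rokhlin towers, namely the collection with top $W_0$ and the collection with top $T^{-N}(W_0)$, both of which inherit the pairwise-disjointness of closures guaranteed by Lemma \ref{lm7}. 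This yields at most $2(s+1) = 2s+2$ open $N$-Rokhlin towers covering $X$, as required.

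The main obstacle will be Step 3, namely the bookkeeping that shows each left special element forces the creation of at most two extra $m$-towers (and not more). This relies on combining Theorem \ref{lm4} (to place the tower boundaries in general position relative to preimages of $\omega_j$) with the precise form of the saturation condition in Lemma \ref{lm7}, and on verifying that towers spawned near distinct $\omega_j$'s can be kept mutually non-interfering by taking the $U_x$'s sufficiently small in the first step.
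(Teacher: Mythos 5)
Your high-level strategy matches the paper's: build clopen candidate sets satisfying the hypotheses of Lemma \ref{lm7}, merge them recursively into a single saturated clopen set $W$, handle the left special elements separately, and then split each height-$2N$ tower into two height-$N$ Rokhlin towers to get the count $2|Sp_l(X,T)|+2$. However, several concrete steps in your sketch are either wrong or missing.

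First, the invocation of Theorem \ref{lm4} is misplaced. In the zero-dimensional setting every neighborhood can be shrunk to a clopen set (Lemma \ref{lm3s}), so all boundaries $\partial W^{(k)}$ are empty and ``general position'' is vacuous. The paper's Theorem \ref{lm4} is not used in the proof of Theorem \ref{zdr}; invoking it here indicates a misreading of what structure the zero-dimensional hypothesis actually buys.

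Second, Step 1 is incomplete. Ensuring $T^{-i}(\overline{U_x})$ pairwise disjoint for $i=0,\ldots,2N-1$ does give hypothesis (1) of Lemma \ref{lm7}, but that lemma also requires (2) that $T^{-2N+1}(T^N(\overline{V})),\ldots,T^{-N}(T^N(\overline{V}))$ be pairwise disjoint (note the forward push by $T^N$, which is \emph{not} the same as shrinking $\overline{V}$), and (3) a uniform $\eta$-condition of the form $T^{-i}(T^{i+N}(y))=\{T^N(y)\}$ near $\overline{V}$. The paper arranges (2) by taking $\rho_x$ so that the preimages of $B(T^N(x),\rho_x)$ are disjoint and then $\rho'_x$ so that $T^N$ pushes $B(x,\rho'_x)$ inside; and it arranges (3) by taking $U_x$ to avoid $C=\bigcup_{0\le i\le 2N-1}T^{-i}(Sp_l(X,T))$. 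Your exceptional set $\bigcup_{i=0}^{N-2}T^{-i}(Sp_l(X,T))$ is too small: the saturation $T^{-i}(T^i(U_x))=U_x$ alone already forces you to avoid $\bigcup_{j=1}^{N-1}T^{-j}(Sp_l(X,T))$, and the $\eta$-condition needs $\bigcup_{j=N+1}^{2N-1}T^{-j}(Sp_l(X,T))$ on top.

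Third, and most importantly, your Step 3 --- covering the orbit segments of the special elements --- is exactly the part that needs an explicit construction, and you leave it as ``the main obstacle.'' Simply ``starting a new $m$-tower'' is not enough: a clopen set $V$ near a special element does \emph{not} automatically satisfy the saturation identity, so $\{V,T^{-1}(V),\ldots\}$ need not be a Rokhlin tower. The paper instead builds, for each $\omega_k$, a clopen set $V_k=\bigcup_{T^{2N-1}(x)=\omega_k}T^{-(2N-1)}(T^{2N-1}(\tilde V_{x,k}))$ near the deep preimages $T^{-2N+1}(\{\omega_k\})$; by construction $V_k$ is saturated, and the \emph{forward} iterates $\{T^i(V_k):0\le i\le 2N-1\}$ form a legitimate Rokhlin tower (each level equals $T^{-1}$ of the next). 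The bookkeeping is then clean: one merged set $W$ contributes two $N$-towers and each of the $q=|Sp_l(X,T)|$ sets $V_k$ contributes two more, for $2q+2$ total --- one extra tower per special element, not two. Your phrase ``splits the process into at most two separate $m$-towers'' combined with the final count ``$s+1$ $m$-towers'' does not add up without that clarification.

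In short: the architecture of the argument is right, but you have not addressed hypotheses (2)--(3) of Lemma \ref{lm7}, you have dragged in Theorem \ref{lm4} for no purpose, and the construction that actually handles the special-element orbits --- which you correctly identify as the crux --- is absent.
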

\begin{proof}
First we note that, for every $x\in X$ and $i,j\in\N$ with $i<j$,
\[T^{-i}(T^N(x))\cap T^{-j}(T^N(x))=\varnothing.\]
For this, suppose that $y\in T^{-i}(T^N(x))\cap T^{-j}(T^N(x))$, then $T^i(y)=T^N(x)=T^j(y)$. This follows that
\[T^{N+j}(x)=T^j(T^N(x))=T^{i+j}(y)=T^i(T^N(x))=T^{N+i}(x)\]
and hence
\[T^{j-i}(T^{N+i}(x))=T^{N+i}(x),\]
which means $T^{N+i}(x)$ is a periodic point, a contradiction. 

Now by Lemma \ref{lm5}, there exists $\rho_x>0$ such that
\[T^{-3N+1}(\overline{B(T^N(x),\rho_x)}),\cdots, T^{-N}(\overline{B(T^N(x),\rho_x)})\]
are mutually disjoint. We then find $\rho'_x>0$ such that $T^N(\overline{B(x,\rho'_x)})\subset\overline{B(T^N(x),\rho_x)}$. This will follow that the sets
\begin{align}\label{dis}
T^{-3N+1}(T^N(\overline{B(x,\rho'_x)})),\cdots,T^{-N}(T^N(\overline{B(x,\rho'_x)}))
\end{align}
are pairwise disjoint. Write $Sp_l(X,T)=\{\omega_1,\cdots,\omega_q\}$ for $q=|Sp_l(X,T)|$ and define
\[C=\bigcup_{0\le i\le 2N-1}T^{-i}(Sp_l(X,T))=\bigcup_{1\le k\le q}\bigcup_{0\le i\le 2N-1}T^{-i}(\{\omega_k\}).\]
Since every $T^i$ are a composition of local homeomorphisms, it itself is a local homeomorphism, which follows that $T^{-i}(\{\omega_k\})$'s are discrete, and therefore finite. Consequently, $C$ is a finite set. Then we take $\rho''_x>0$ such that
\begin{align}\label{eqt}
B(x,\rho''_x)\subset X\setminus C.
\end{align}
Define $\delta_x=\min\{\rho_x,\rho'_x,\rho''_x\}>0$. Applying Lemma \ref{lm3s}, there is a clopen set $U_x$ with
\begin{align}\label{eqd}
x\in\overline{B(x,\delta_x/2)}\subset U_x\subset B(x,2\delta_x/3).
\end{align}
We now claim that every $U_x$ satisfies the following property: 

$\bullet$ $T^{-2N+1}(\overline{U_x}),\cdots,T^{-N}(\overline{U_x})$ are mutually disjoint: This is because we have inclusions
\begin{align*}
T^{-2N+i}(\overline{U})&\subset T^{-2N+i}(\overline{B(x,\delta_x}))\\
&\subset T^{-2N+i}(\overline{B(x,\rho'_x}))\subset T^{-3N+i}(T^N(\overline{B(x,\rho'_x})))
\end{align*}
for all $i=1,2,\cdots,N$. Then by \eqref{dis}, these sets are mutually disjoint;

$\bullet$ $T^{-i}(T^i(U_x))=U_x$ for all $i=1,2,\cdots,N-1$: It is clear that $U_x\subset T^{-i}(T^i(U_x))$. Now let $z\in T^{-i}(T^i(U_x))$, or in other words, $T^i(z)\in T^i(U_x)$. 

We first claim that $T^i(U_x)\cap Sp_l(X,T)=\varnothing$. In fact, we have
\begin{align*}
U_x\subset B(x,\delta_x)&\subset B(x,\rho''_x)\\
&\subset X\setminus C\subset X\setminus T^{-i}(Sp_l(X,T))
\end{align*}
according to \eqref{eqt}. This then means that $T^i(U_x)\cap Sp_l(X,T)=\varnothing$. The claim follows.

Note that $T^i(U_x)\cap Sp_l(X,T)=\varnothing$ is equivalent to saying that $T^i(U_x)$ contains no left special elements. Then 
\[T^{-1}(T^i(z))=T^{i-1}(z)\ \ {\rm and}\ \ T^{-1}(T^i(U_x))=T^{i-1}(U_x).\]
This then implies $T^{i-1}(z)\in T^{i-1}(U_x)$. By applying this argument repeatedly, we finally get $z\in U_x$. Now the equality $T^{-i}(T^i(U_x))=U_x$ is established;

$\bullet$ $T^{-2N+1}(T^N(\overline{U_x})),\cdots,T^{-N}(T^N(\overline{U_x}))$ are pairwise disjoint: This is similar to the first property, since $\overline{U_x}\subset \overline{B(x,\rho'_x)}$ and the sets
\[T^{-2N+1}(T^N(\overline{B(x,\rho'_x)})),\cdots,T^{-N}(T^N(\overline{B(x,\rho'_x)}))\]
are pairwise disjoint by \eqref{dis};

$\bullet$ There is $\eta>0$ such that for every $y\in\overline{U_x}$, $z\in B(y,\eta)$ and every $1\le i\le N-1$, $T^{-i}(T^{i+N}(z))=T^N(z)$: Let 
\[\eta=\delta_x/6.\]
Let $y\in\overline{U_x}$, $z\in B(y,\eta)$ and $1\le i\le N-1$. Then $d(y,z)<\delta_x/6$. Note that since $y\in\overline{U_x}\subset \overline{B(x,2\delta_x/3)}$,
\[d(z,x)\le d(z,y)+d(y,x)\le\delta_x/6+2\delta_x/3<\delta_x,\]
which follows that $z\in B(x,\delta_x)$. Similar to the argument for the second property, it could be shown that
\[T^{i+N}(B(x,\delta_x))\cap Sp_l(X,T)=\varnothing.\]
This then implies that 
\[T^{-1}(T^{i+N}(z))=T^{i-1+N}(z).\]
Note that we also have that $T^{i-1+N}(B(x,\delta_x))\cap Sp_l(X,T)=\varnothing$. Then by a recursive use of this argument, we finally deduce that
\[T^{-i}(T^{i+N}(z))=T^N(z).\]
Denote $\mathcal{U}=\{U_x\}_{x\in X\setminus C}$. It is clear that $\mathcal{U}$ is an open cover of $X\setminus C$. 

Now we define an open cover of $C$ as follows. Recall that
\[C=\bigcup_{0\le i\le 2N-1}T^{-i}(Sp_l(X,T))=\bigcup_{1\le k\le q}\bigcup_{0\le i\le 2N-1}T^{-i}(\{\omega_k\}).\]
For every $1\le k\le q$, let $\tilde{U}_k$ be an open neighborhood of $\omega_k$ such that the sets
\begin{align}\label{tu}
\overline{\tilde{U}_k}, T(\overline{\tilde{U}_k}),\cdots, T^{2N-1}(\overline{\tilde{U}_k})
\end{align}
are pairwise disjoint. For every $x\in T^{-2N+1}(\{\omega_k\})$, let $\tilde{V}_{x,k}$ be a clopen neighborhood of $x$ with 
\[T^{2N-1}(\tilde{V}_{x,k})\subset\tilde{U}_k.\]
Define
\[V_k=\bigcup_{x\in T^{-2N+1}(\{\omega_k\})}T^{-(2N-1)}(T^{2N-1}(\tilde{V}_{x,k})).\]
Then $V_k$'s are clopen sets. We claim that $V_k$ satisfies the following properties:

$\bullet$ $\overline{V_k}, T(\overline{V_k}), \cdots, T^{2N-1}(\overline{V_k})$ are pairwise disjoint: If there are $i,j\in\{0,1,\cdots,2N-1\}$ with $i<j$ such that
\begin{align}\label{eqcrazy}
T^i(\overline{V_k})\cap T^j(\overline{V_k})\ne\varnothing.
\end{align}
This will follows that
\[T^{2N-1}(\overline{V_k})\cap T^{2N-1+j-i}(\overline{V_k})\ne\varnothing.\]
Note that since
\[T^{2N-1}(\overline{V_k})=\bigcup_{T^{2N-1}(x)=w_k}T^{2N-1}(\overline{\tilde{V}_{x,k}})\subset \overline{\tilde{U}_k},\]
one see that $\overline{\tilde{U}_k}\cap T^{j-i}(\overline{\tilde{U}_k})\ne\varnothing$ with $j-i\in\{1,2,\cdots,2N-1\}$. This contradicts \eqref{tu};

$\bullet$ $T^{-1}(T^{i+1}(V_k))=T^i(V_k)$ for $i\in\{0,1,\cdots,2N-2\}$: It suffices to show that
\[T^{-1}(T^{i+1}(V_k))\subset T^i(V_k).\]
Let $x\in T^{-1}(T^{i+1}(V_k))$, that is,
\[T(x)\in T^{i+1}(V_k))=\bigcup_{x\in T^{-2N+1}(\{\omega_k\})}T^{-2N+i+2}(T^{2N-1}(\tilde{V}_{x,k})).\]
Note that the equality follows by Lemma \ref{lmc} since $T$ is surjective. Since $-2N+i+2\le 0$, again by Lemma \ref{lmc}, we have
\[x\in \bigcup_{x\in T^{-2N+1}(\{\omega_k\})}T^{-2N+i+1}(T^{2N-1}(\tilde{V}_{x,k}))=T^i(V_k).\]

$\bullet$ $\mathcal{V}=\{T^i(V_k): 1\le k\le q, 0\le i\le 2N-1\}$ covers $C$: Let $x\in C$ be arbitrary. Then there is $1\le k\le q$ and $0\le j\le 2N-1$ such that
\[x\in T^{-j}(\{\omega_k\}).\]
Find $z\in X$ with $T^{2N-1-j}(z)=x$ since $T$ is surjective. Then one can see that
\[T^{2N-1}(z)=T^{j}(T^{2N-1-j}(z))=T^j(x)=\omega_k,\]
that is, $z\in T^{-2N+1}(\{\omega_k\})\subset \tilde{V}_{z,k}\subset V_k$. Consequently, $x=T^{2N-1-j}(z)\in T^{2N-1-j}(V_k)$. As $2N-1-j\in\{0,1,\cdots,2N-1\}$, the claim then follows.

Now we have defined an open cover of $C$, which follows that
\begin{align*}
&\mathcal{U}\cup\mathcal{V}\\
=\{U_x, T^i(V_k): x\in X\setminus &C, 1\le k\le q, 0\le i\le 2N-1\}
\end{align*}
is an open cover of $X$. Since $X$ is compact, it has a finite subcover $\mathcal{U}_1$. Let 
\[\mathcal{U}_2=\mathcal{U}_1\cap\mathcal{U}\]
Then $\mathcal{U}_2$ is a subcollection of $\mathcal{U}$ such that 
\[\mathcal{U}_2\cup\{T^i(V_k): 1\le k\le q, 0\le i\le 2N-1\}\]
is an open cover of $X$. We note that $\mathcal{U}_2$ might not be an open cover of $X\setminus C$. Now we denote 
\[\mathcal{U}_2=\{U_1, U_2,\cdots,U_s\}.\]
By the previous argument, $U=U_1, V=U_2$ satisfies the condition of Lemma \ref{lm7}, which follows that, by applying Lemma \ref{lm7}, there exists a clopen set $W_2$ with the following properties:

$(P'_1)$ $U_1\subset W_2$;

$(P'_2)$ $U_2\subset\bigcup_{0\le i\le 2N-1}T^{-i}(W_2)$;

$(P'_3)$ $T^{-i}(T^i(W_2))=W_2$ for $i=1,2,\cdots,N-1$;

$(P'_4)$ $T^{-2N+1}(\overline{W_2}),\cdots, T^{-N}(\overline{W_2})$ are pairwise disjoint.

Note that then $U=W_2, V=U_3$ satisfies the condition of Lemma \ref{lm7}. Applying Lemma \ref{lm7} again, we then have a clopen set $W_3$ such that

$(P''_1)$ $W_2\subset W_3$;

$(P''_2)$ $U_3\subset\bigcup_{0\le i\le 2N-1}T^{-i}(W_3)$;

$(P''_3)$ $T^{-i}(T^i(W_3))=W_3$ for $i=1,2,\cdots,N-1$;

$(P''_4)$ $T^{-2N+1}(\overline{W_3}),\cdots, T^{-N}(\overline{W_3})$ are pairwise disjoint.

Note that then $U_1\cup U_2\cup U_3\subset\bigcup_{0\le i\le 2N-1}T^{-i}(W_3)$. Upon a recursive applying of this procedure, we are given a clopen map $W_s$ such that
\begin{align}\label{equ}
\bigcup_{1\le j\le s}U_j\subset\bigcup_{0\le i\le 2N-1}T^{-i}(W_s)
\end{align}
and the sets
\[T^{-2N+1}(\overline{W_s}),\cdots, T^{-N}(\overline{W_s})\]
are pairwise disjoint. Note that this follows that $T^{-N+1}(\overline{W_s}),\cdots,\overline{W_s}$ are also disjoint. To see this, if $x\in T^{-N+i}(\overline{W_s})\cap T^{-N+j}(\overline{W_s})$ for some $1\le i\ne j\le N$, since $T$ is surjective, let $y$ be such that $T^N(y)=x$. Then $T^{2N-i}(y)\in\overline{W_s}$ and $T^{2N-j}(y)\in\overline{W_s}$, which contradicts the fact that $T^{-2N+i}(\overline{W_s})\cap T^{-2N+j}(\overline{W_s})=\varnothing$. 

As what we have mentioned before, 
\[\{U_1, U_2,\cdots,U_s\}\cup\{T^i(V_k): 1\le k\le q, 0\le i\le 2N-1\}\]
is an open cover of $X$, which according to \eqref{equ} follows that
\[\bigcup_{0\le i\le 2N-1}T^{-i}(W_s)\cup \bigcup_{1\le k\le q}\bigcup_{0\le i\le 2N-1}T^i(V_k)=X.\]
Now we are sufficiently ready to define a Rokhlin cover. In fact, by Lemma \ref{lmc} and previous arguments, the following collections of clopen sets are disjoint $N$-towers:

{\bf Tower No. 1:} $\mathcal{T}_1=\{T^{-i}(W_s), i=0,1,\cdots,N-1\}$: We have shown the disjointness of their closures. Since $T$ is surjective, we also have $T(T^{-i-1}(W_s))=T^{-i}(W_s)$;

{\bf Tower No. 2:} $\mathcal{T}_2=\{T^{-i}(W_s), i=N,N+1,\cdots,2N-1\}$: We have also shown the disjointness of their closures. Again, since $T$ is surjective, we also have $T(T^{-i-1}(W_s))=T^{-i}(W_s)$;

{\bf Towers associated with $Sp_l(X,T)$:} For $1\le k\le q$, define
\[\tilde{\mathcal{T}^1_k}=\{T^i(V_k): 0\le i\le N-1\}\ {\rm and}\ \tilde{\mathcal{T}^2_k}=\{T^i(V_k): N\le i\le 2N-1\}.\]
The disjointness of their closures are established by \eqref{eqcrazy}. 

These towers cover the space $X$ in the following manner:
\[\textstyle \left(\bigcup\mathcal{T}_1\right)\cup\left(\bigcup\mathcal{T}_2\right)\cup\left(\bigcup_{1\le k\le q}\bigcup\tilde{\mathcal{T}^1_k}\right)\cup\left(\bigcup_{1\le k\le q}\bigcup\tilde{\mathcal{T}^2_k}\right)=X.\]
Since we have $2q+2=2|Sp_l(X,T)|+2$ disjoint towers, we finally conclude that
\[{\rm dim}_{\rm Rok}(X,T)\le 2|Sp_l(X,T)|+1.\]
The proof is now finished.
\end{proof}

\begin{cor}
Let $X$ be a one-sided subshift with $\sigma(X)=X$. If $X$ is aperiodic and has nonsuperlinear-growth complexity, then
\[{\rm dim}_{\rm Roc}(\tildeX,\sigma_{\tildeX})\le 2\lceil 2d\rceil+1,\]
where $d=\liminf_{n\to\infty}p_X(n)/n$.
\end{cor}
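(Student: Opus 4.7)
The plan is to combine three results already established in the paper: the bound on the number of left special elements of an aperiodic subshift with nonsuperlinear-growth complexity (Lemma \ref{useful}), the structure theorem for the cover $(\tildeX,\sigma_{\tildeX})$ of such a subshift (Theorem \ref{shift}), and the general Rokhlin dimension bound for aperiodic surjective local homeomorphisms on zero-dimensional compact metric spaces (Theorem \ref{zdr}).

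First, since $X$ is aperiodic and has nonsuperlinear-growth complexity, Lemma \ref{useful} gives $|Sp_l(X,\sigma)|\le\lceil 2d\rceil<\infty$ where $d=\liminf_{n\to\infty}p_X(n)/n$. Next, Theorem \ref{shift} applies: it tells us that $\tildeX$ is a zero-dimensional compact metric space, $\sigma_{\tildeX}$ is an aperiodic surjective local homeomorphism, and the left special set is preserved in cardinality, namely $|Sp_l(\tildeX,\sigma_{\tildeX})|=|Sp_l(X,\sigma)|\le\lceil 2d\rceil$.

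Finally, since $(\tildeX,\sigma_{\tildeX})$ satisfies the hypotheses of Theorem \ref{zdr}, we conclude
\[
{\rm dim}_{\rm Rok}(\tildeX,\sigma_{\tildeX})\le 2|Sp_l(\tildeX,\sigma_{\tildeX})|+1\le 2\lceil 2d\rceil+1,
\]
which is the claimed inequality.

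There is essentially no obstacle here; the result is a direct concatenation of the cited lemmas/theorems. The only minor point worth writing out explicitly in the proof is the matching of hypotheses of Theorem \ref{zdr} against the conclusions of Theorem \ref{shift} (namely, zero-dimensionality of $\tildeX$, aperiodicity of $\sigma_{\tildeX}$, surjectivity, and finiteness of the left special set), after which the numerical bound on $|Sp_l(\tildeX,\sigma_{\tildeX})|$ coming from Lemma \ref{useful} is plugged in.
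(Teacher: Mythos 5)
Your proposal is correct and follows the same route as the paper, which proves the corollary by citing exactly Theorem \ref{zdr}, Theorem \ref{shift}, and Lemma \ref{useful}. You have simply spelled out the hypothesis-matching that the paper leaves implicit, which is fine.
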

\begin{proof}
This follows from Theorem \ref{zdr}, Theorem \ref{shift}, and Lemma \ref{useful}.
\end{proof}

\section{The dynamic Asymptotic dimension of a local homeomorphism}\label{sec:5}
\subsection{The tower dimension of a local homeomorphism}
\begin{df}\label{td}
Let $X$ be a compact metric space and $T: X\to X$ be a local homeomorphism. The {\it tower dimension} of $(X,T)$ is defined as the smallest nonnegative integer $d\in\N$ satisfying that, for every finite set $E\Subset\Z$, there are finitely many pairs
\[(V_1, S_1), \cdots, (V_s, S_s)\]
with the following properties.

(1) For every $1\le i\le s$, $V_i\subset X$ is an open set and $S_i\Subset\N$ is a finite subset;

(2) For every $1\le i\le s$ and distinct $m,n\in S_i$, $T^{-m}(\overline{V_i})\cap T^{-m}(\overline{V_i})=\varnothing$;

(3) The family $\{T^{-n}(V_i): n\in S_i, 1\le i\le s\}$ has chromatic number at most $d+1$;

(4) The family $\{T^{-n}(V_i): n\in S_i, 1\le i\le s\}$ forms an open cover of $X$;

(5) For every $x\in X$, there is $1\le i\le s$ and $n\in S_i$ such that
\[x\in T^{-n}(V_i)\ \ {\rm and}\ \ E+\{n\}\subset S_i.\]
\end{df}

\begin{lem}\label{td1}
Let $X$ be a compact metric space and $T: X\to X$ be a local homeomorphism. Then
\[{\rm dim}_{\rm tow}(X,T)\le 2{\rm dim}_{\rm Rok}(X,T)+1.\]
\end{lem}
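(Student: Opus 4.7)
The plan is to realize a tower-dimension witness for a given $E$ by packaging together two families of pairs built from Rokhlin covers, the second compensating for the failure of condition~(5) at the "boundary" of the first.

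Given a finite set $E \Subset \Z$, set $M = \max_{e\in E}|e|$ and let $d := {\rm dim}_{\rm Rok}(X,T)$; assume $d < \infty$, else the inequality is vacuous. First I would choose $N \ge 2M + 2$ and apply the definition of topological Rokhlin dimension to obtain an $N$-Rokhlin cover $\mathcal{R} = \{\mathcal{T}(U_0^l, N)\}_{l=0}^{d}$ consisting of at most $d+1$ open Rokhlin towers, with $U_k^l = T^{-k}(U_0^l)$. A crucial elementary fact used throughout is that, since $T$ is a local homeomorphism, one has $T^{-k}(\overline{V}) = \overline{T^{-k}(V)}$ for every open $V$ (this is the same identity established in Proposition~\ref{3.2} and in the proof of Theorem~\ref{lm4}); consequently, the Rokhlin tower closure disjointness $\overline{U_i^l}\cap\overline{U_j^l}=\varnothing$ passes directly to condition~(2) of Definition~\ref{td} for any pair whose base sits inside $U_0^l$ and whose index set lies in $\{0,1,\dots,N-1\}$.

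For each tower $l$ I would form the natural pair $(V_l^{(1)}, S_l^{(1)}) := (U_0^l, \{0, 1, \ldots, N-1\})$. Then conditions (1), (2), and (4) of Definition~\ref{td} are immediate, and coloring each tower by a single color gives chromatic number at most $d+1$. However, condition~(5) is met only for $x\in U_n^l$ with $n$ in the "safe interior" $[M, N-1-M]$: a point that lies only at "boundary levels" $n<M$ or $n>N-1-M$ in every tower containing it would fail condition~(5).

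To cover these boundary points I would add a second family of $d+1$ pairs obtained by applying the Rokhlin dimension hypothesis again (possibly for a different height $N'$) and choosing the cover so that every boundary point of the first cover lies at an interior level of some tower of the second. Assigning fresh colors to the new pairs gives combined chromatic number at most $2(d+1) = 2{\rm dim}_{\rm Rok}(X,T) + 2$, which yields ${\rm dim}_{\rm tow}(X,T) \le 2{\rm dim}_{\rm Rok}(X,T) + 1$.

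The main obstacle is precisely arranging the second family so that its safe interiors cover the unsafe boundaries of the first. The approach I have in mind uses the local-homeomorphism property to extend each Rokhlin tower "upward" via forward iterates: the elementary fact that $T^j(U_0^l)\cap U_m^l = \varnothing$ whenever $j+m\in\{1,\ldots,N-1\}$ (an easy consequence of the Rokhlin disjointness, via $y\in U_0^l\cap U_{j+m}^l$) allows one to form a second pair based at $T^M(U_0^l)$ whose "safe interior" covers the top boundary of the original tower, and symmetrically for the bottom. Keeping condition~(2) of Definition~\ref{td} valid for the extended pair is exactly where the local-homeomorphism identity $T^{-k}(\overline{V})=\overline{T^{-k}(V)}$ is needed once more, and this bookkeeping is the technical heart of the proof and the origin of the factor $2$ in the stated inequality.
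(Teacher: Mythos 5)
Your high-level plan (a ``primary'' family of pairs from the Rokhlin towers plus a ``secondary'' family to repair condition (5) at the boundary levels, together giving the factor $2$) matches the paper in spirit, but the mechanism you propose for producing the secondary family does not work for a non-invertible local homeomorphism, so there is a genuine gap.

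First, ``applying the Rokhlin dimension hypothesis again (possibly for a different height $N'$) and choosing the cover so that every boundary point of the first cover lies at an interior level of some tower of the second'' is not available to you: the definition of ${\rm dim}_{\rm Rok}$ lets you prescribe the tower height, but gives you no control whatsoever over where the new towers' bases sit, so you cannot align the second cover's safe interior with the first cover's boundary. Second, and more seriously, the fallback you describe --- building the secondary pairs from the forward images $T^M(U_0^l)$ --- breaks condition (2) of Definition~\ref{td}. You would need $T^{-m}(\overline{T^M(U_0^l)})\cap T^{-n}(\overline{T^M(U_0^l)})=\varnothing$ for distinct $m,n\in S$, but since $T^M$ is not injective, $T^{-n}(T^M(A))$ does not simplify to $T^{M-n}(A)$; one only has the containment $A\subset T^{-M}(T^M(A))$, and the extra points can ruin disjointness. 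Moreover $T^M(U_0^l)$ does not actually catch the top levels: from $x\in T^{-n}(U_0^l)$ one gets $T^{n-M}(x)\in T^{-M}(U_0^l)$, not in $T^M(U_0^l)$.

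The paper's fix is to take the secondary pairs to be $\bigl(T^{-M}(U^l),\,S\bigr)$ with the \emph{same} index set $S=\{0,\dots,N-1\}$, using $M=1+2\max\{|n|:n\in E\}$ and $N=2+3\max\{|n|:n\in E\}$. Preimages compose cleanly: by Lemma~\ref{lmc}, $T^{-n}\bigl(T^{-M}(\overline{U^l})\bigr)=T^{-(n+M)}(\overline{U^l})=T^{-M}\bigl(T^{-n}(\overline{U^l})\bigr)$, so the Rokhlin disjointness of $\{T^{-k}(\overline{U^l})\}_k$ transfers verbatim to the shifted pair, giving condition (2) with no injectivity assumption. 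For condition (5), a point $x\in T^{-n}(U^l)$ with $0\le n\le M-1$ is handled by the primary pair because $E+\{n\}\subset S$ by the choice of $N$; if $M\le n\le N-1$, then $T^{-n}(U^l)=T^{-(n-M)}\bigl(T^{-M}(U^l)\bigr)$ and $E+\{n-M\}\subset S$, so the shifted pair handles it. This yields exactly $2(d+1)$ pairs and the bound ${\rm dim}_{\rm tow}\le 2\,{\rm dim}_{\rm Rok}+1$. If you rewrite your argument with $T^{-M}(U^l)$ in place of $T^M(U_0^l)$, verify condition (2) via the identity $T^{-m-M}=T^{-m}\circ T^{-M}=T^{-M}\circ T^{-m}$, and replace the appeal to ``choosing the second cover'' by the explicit shift, you recover the paper's proof.
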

\begin{proof}
It suffices to consider the case of ${\rm dim}_{\rm Rok}(X,T)<\infty$, or else there is nothing to be shown. Assume ${\rm dim}_{\rm Rok}(X,T)=d\in\N$. 

Let $E\Subset \N$ be a nonempty finite subset. We may assume $E\setminus\{0\}\ne\varnothing$, without loss of generality. Define
\[M=1+2\max \{|n|: n\in E\}\ \ {\rm and}\ \ N=2+3\max\{|n|: n\in E\}.\]
Since ${\rm dim}_{\rm Rok}(X,T)=d$, there are at most $d+1$ $N$-Rokhlin towers $\mathcal{T}_i\,(i=0,1,\cdots,d)$, each of which is of the following form by the definition of the Rokhlin dimension:
\[\mathcal{T}_i=\{U^i, T^{-1}(U^i),\cdots, T^{-(N-1)}(U^i)\},\]
where $U^i$ is an open set (serving as the base of $\mathcal{T}_i$) for every $i=0,1,\cdots,d$. Now denote $S=\{0,1,\cdots,N-1\}$ and consider the following pairs
\[(U^0, S), (U^1,S), \cdots, (U^{d}, S), (T^{-M}(U^0), S), \cdots, (T^{-M}(U^{d}), S).\]
We now show that the above pairs satisfy the properties (1)--(5) in Definition \ref{td}.

(1) Since $T$ is continuous, the first property trivially holds;

(2) Let $m,n\in S$ be distinct elements and $0\le i\le d$. That $T^{-m}(\overline{U^i})\cap T^{-n}(\overline{U^i} )=\varnothing$ follows directly from the definition of the Rokhlin towers. Note that this also implies that $T^{-m}(\overline{T^{-M}(U^i)})\cap T^{-n}(\overline{T^{-M}(U^i)})=\varnothing$, since by Lemma \ref{lmc}
\begin{align*}
T^{-m}(\overline{T^{-M}(U^i)})\cap T^{-n}(\overline{T^{-M}(U^i)})&=T^{-m}(T^{-M}(\overline{U^i}))\cap T^{-n}(T^{-M}(\overline{U^i}))\\
&=T^{-m-M}(\overline{U^i})\cap T^{-n-M}(\overline{U^i})\\
&=T^{-M}(T^{-m}(\overline{U^i})\cap T^{-n}(\overline{U^i}))=\varnothing.
\end{align*}

(3) As there are $2d+2$ towers each of which is a disjoint collection of open sets, its chromatic number is at most $2d+2$;

(4) This is immediate since $\{T^{-n}(U^i): n\in S, 0\le i\le d\}$ has already formed an open cover of $X$;

(5) Let $x\in X$. Since 
\[\{T^{-n}(U^i): n\in S, 0\le i\le d\}\]
is an open cover of $X$, there are $0\le i\le d$ and $n\in S=\{0,1,\cdots,N-1\}$ such that
\[x\in T^{-n}(U^i).\]

$\bullet$ if $0\le n\le M-1$, then for every $t\in E$, we have
\[0\le t+n\le M-1+\max E\le 3\max\{|n|: n\in E\}=N-2\le N-1,\]
which follows that $E+\{n\}\subset S$;

$\bullet$ If $M\le n\le N-1$, then $n-M\ge0$, which implies that
\[x\in T^{-n}(U^i)=T^{-(n-M)}(T^{-M}(U^i)).\]
Note that we also have $n-M\in S$ since $0\le n-M\le N-1$. Then this follows that, for every $t\in E$,
\begin{align*}
0\le t+n-M&\le\max E+N-1-M\\
&\le\max\{|n|: n\in E\}+N-1-(1+2\max\{|n|: n\in E\})\\
&=N-2-\max\{|n|: n\in E\}\le N-1,
\end{align*}
which again implies that $E+\{n-M\}\subset S$.

Finally, by the chromatic $\le 2d+2$ in (3), we conclude that ${\rm dim}_{\rm tow}(X,T)\le 2d+1=2{\rm dim}_{\rm Rok}(X,T)+1$. The proof is now completed.
\end{proof}

\begin{cor}\label{cor}
Let $X$ be a zero-dimensional compact metric space and $T: X\to X$ an aperiodic surjective local homeomorphism. Then
\[{\rm dim}_{\rm tow}(X,T)\le 4|Sp_l(X,T)|+3.\]
\end{cor}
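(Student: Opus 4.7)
The plan is to derive this corollary by a direct composition of the two previous bounds that have already been established in the paper, so no new constructions are required; the entire proof will be a one-line inequality chain. The only minor thing to address is that the stated bound is finite, so one should first observe that either $|Sp_l(X,T)|<\infty$ (in which case the argument proceeds) or the inequality is vacuous after interpreting $4\cdot\infty+3=\infty$.

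First I would invoke Theorem \ref{zdr}, which, since $X$ is zero-dimensional and $T$ is an aperiodic surjective local homeomorphism, gives
\[{\rm dim}_{\rm Rok}(X,T)\le 2|Sp_l(X,T)|+1.\]
Next I would apply Lemma \ref{td1}, whose hypotheses require only that $X$ be a compact metric space and $T$ a local homeomorphism — both satisfied here — to obtain
\[{\rm dim}_{\rm tow}(X,T)\le 2{\rm dim}_{\rm Rok}(X,T)+1.\]
Substituting the first inequality into the second yields
\[{\rm dim}_{\rm tow}(X,T)\le 2\bigl(2|Sp_l(X,T)|+1\bigr)+1=4|Sp_l(X,T)|+3,\]
which is the desired bound.

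There is essentially no obstacle: both ingredients are already proved in the paper and their hypotheses match the hypotheses of the corollary verbatim. The only subtlety worth flagging explicitly is that Theorem \ref{zdr} is stated under the hypothesis $|Sp_l(X,T)|<\infty$, while the corollary as written does not state this assumption; I would either add the finiteness hypothesis as implicit (since otherwise the right-hand side is infinite and the statement is vacuous) or note that in the infinite case both sides of the inequality are $\infty$ and the statement holds trivially.
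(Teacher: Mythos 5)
Your proposal is correct and follows exactly the same route as the paper: reduce to the case $|Sp_l(X,T)|<\infty$, apply Theorem \ref{zdr} to bound ${\rm dim}_{\rm Rok}(X,T)$, and then feed that into Lemma \ref{td1}. The paper's own proof is the same one-line composition, including the observation that the finite case is the only nontrivial one.
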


\begin{proof}
It suffices to assume $|Sp_l(X,T)|<\infty$. Then by Theorem \ref{zdr}, 
\[{\rm dim}_{\rm Rok}(X,T)\le 2|Sp_l(X,T)|+1.\]
Applying Lemma \ref{td1}, we conclude that
\[{\rm dim}_{\rm tow}(X,T)\le 2{\rm dim}_{\rm Rok}(X,T)+1\le 4|Sp_l(X,T)|+3.\]
The corollary then follows.
\end{proof}

\begin{cor}\label{corr}
Let $X$ be a one-sided shift space with $\sigma(X)=X$. If $X$ is aperiodic and has nonsuperlinear-growth complexity, then
\[{\rm dim}_{\rm tow}(\tildeX,\sigma_{\tildeX})\le 4\lceil 2d\rceil+3,\]
where $d=\liminf_{n\to\infty}p_X(n)/n$.
\end{cor}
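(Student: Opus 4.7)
The plan is to simply chain together three results already established in the paper: Lemma \ref{useful}, Theorem \ref{shift}, and Corollary \ref{cor}. Since the tower dimension bound in Corollary \ref{cor} is stated for a \emph{zero-dimensional} compact metric space equipped with an aperiodic surjective local homeomorphism, and is expressed in terms of the cardinality of the left special set, the task is to verify that $(\tildeX,\sigma_{\tildeX})$ fits these hypotheses and then substitute the relevant bound on $|Sp_l(\tildeX,\sigma_{\tildeX})|$.

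First I would invoke Lemma \ref{useful}, which under the hypotheses of aperiodicity and nonsuperlinear-growth complexity yields the finite bound
\[
|Sp_l(X,\sigma)|\le\lceil 2d\rceil,
\]
where $d=\liminf_{n\to\infty}p_X(n)/n$. In particular $Sp_l(X,\sigma)$ is finite, which is exactly the running hypothesis needed to apply Theorem \ref{shift}.

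Next I would apply Theorem \ref{shift} to the aperiodic subshift $X$. This tells us three things at once: (i) $\tildeX$ is a zero-dimensional compact metric space; (ii) $\sigma_{\tildeX}$ is an aperiodic surjective local homeomorphism on $\tildeX$; and (iii) $|Sp_l(\tildeX,\sigma_{\tildeX})|=|Sp_l(X,\sigma)|$. Thus $(\tildeX,\sigma_{\tildeX})$ satisfies all the standing hypotheses of Corollary \ref{cor}.

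Finally I would apply Corollary \ref{cor} to the system $(\tildeX,\sigma_{\tildeX})$, obtaining
\[
{\rm dim}_{\rm tow}(\tildeX,\sigma_{\tildeX})\le 4|Sp_l(\tildeX,\sigma_{\tildeX})|+3=4|Sp_l(X,\sigma)|+3\le 4\lceil 2d\rceil+3,
\]
where the last inequality uses Lemma \ref{useful}. There is no genuine obstacle here: this is a purely formal assembly of the three previously proved statements, and the only thing to be careful about is matching hypotheses (zero-dimensionality, aperiodicity, surjectivity of the local homeomorphism, finiteness of the left special set), all of which are supplied by Theorem \ref{shift} once Lemma \ref{useful} ensures that $Sp_l(X,\sigma)$ is finite.
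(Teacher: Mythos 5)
Your proof is correct and follows exactly the paper's own approach: the paper simply cites Corollary \ref{cor}, Theorem \ref{shift}, and Lemma \ref{useful} in its one-line proof, and you have spelled out precisely how these combine, including the hypothesis-matching (zero-dimensionality, aperiodicity, surjectivity, finiteness of the left special set) that the paper leaves implicit.
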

\begin{proof}
This follows from Corollary \ref{cor}, Theorem \ref{shift}, and Lemma \ref{useful}.
\end{proof}

\subsection{The amenability dimension of a local homeomorphism}

\begin{df}
Let $X$ be a compact metric space and $T: X\to X$ a local homeomorphism. Let $Y$ be a metric space and $\af: \Z\curvearrowright Y$ an action on $Y$ by isometries. 

Let $E\Subset \Z$ be a finite subset and $\varepsilon>0$ a positive real number. A map $\varphi: X\to Y$ is said to be {\it $(E,\varepsilon)$-equivariant}, if
\[d_Y(\varphi(y), \af_n(\varphi(x)))<\varepsilon\]
for all $x,y\in X$ and $n\in E$ with $y\in T^n(\{x\})$.
\end{df}

\begin{df}\label{amd}
Let $X$ be a compact metric space, and $T:X\to X$ a local homeomorphism. The {\it amenability dimension} ${\rm dim}_{\rm am}(X,T)$ of $(X,T)$ is the smallest integer $d\in\N$ with the property that, for every finite subset $E\Subset\Z$ and $\varepsilon>0$, there is an $(E,\varepsilon)$-equivariant continuous map 
\[\varphi: X\to P_d(\Z).\]
\end{df}

\begin{lem}\label{fad}
For every topological dynamical system $(X,T)$ where $X$ is a compact metric space and $T: X\to X$ a surjective local homeomorphism, if $Sp_l(X,T)$ is a finite set consisting of isolated points, then
\[{\rm dim}_{\rm am}(X,T)\le {\rm dim}_{\rm tow}(X,T).\]
\end{lem}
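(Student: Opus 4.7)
The approach is to use the tower cover produced by \({\rm dim}_{\rm tow}(X,T)=d\) to construct, for any prescribed \(E\Subset\Z\) and \(\varepsilon>0\), a continuous \((E,\varepsilon)\)-equivariant map \(\varphi\colon X\to P_d(\Z)\), where \(\Z\) acts on \(P_d(\Z)\) via the standard translation \(\alpha_t(\delta_k)=\delta_{k+t}\). The plan is to build a partition of unity subordinate to a sufficiently fine tower cover and take \(\varphi(x)\) to be the corresponding convex combination of point masses on \(\Z\).

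First, I would enlarge \(E\) to a symmetric window \(E'\Subset\Z\) containing \(E\cup(-E)\cup\{0\}\) with \(|E|/|E'|<\varepsilon/2\), and apply Definition \ref{td} with \(E'\) to obtain pairs \((V_1,S_1),\dots,(V_s,S_s)\) satisfying (1)--(5), whose associated open cover has chromatic number at most \(d+1\). The disjointness in (2) makes the level function \(h_i\colon\bigcup_{n\in S_i}T^{-n}(V_i)\to S_i\), defined so that \(h_i(x)=n\) when \(x\in T^{-n}(V_i)\), locally constant and hence continuous. Using \(h_i\), I would construct a partition of unity \(\{f_{i,n}\}_{i,n\in S_i}\) subordinate to \(\{T^{-n}(V_i)\}\) that is \emph{approximately shift-invariant along towers}: for each \(t\in E\), one has \(|f_{i,n-t}(T^t(x))-f_{i,n}(x)|=O(1/|E'|)\). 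This is arranged by letting each \(f_{i,n}\) be a normalized tent function of width comparable to \(|E'|\) peaked at level \(n\) along \(h_i\); condition (5) guarantees enough vertical room in each tower for the tent to fit.

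Now define
\[
\varphi(x)=\sum_{1\le i\le s,\,n\in S_i}f_{i,n}(x)\,\delta_{-n}.
\]
By the chromatic number bound at most \(d+1\) of the \(f_{i,n}(x)\) are nonzero at any given \(x\), so \(\varphi(x)\in P_d(\Z)\), and continuity of \(\varphi\) is inherited from the \(f_{i,n}\). To verify \((E,\varepsilon)\)-equivariance, observe that if \(y\in T^t(\{x\})\) for \(t\in E\) and \(x\in T^{-n}(V_i)\), then \(y\in T^{-(n-t)}(V_i)\) regardless of the sign of \(t\), and placing the mass at \(-n\) makes \(\alpha_t(\delta_{-n})=\delta_{-(n-t)}\) match the shift of levels. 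The distance \(d_{P_d(\Z)}(\varphi(y),\alpha_t(\varphi(x)))\) then decomposes into at most \(d+1\) contributions of the form \(|f_{i,n-t}(y)-f_{i,n}(x)|\), each \(O(1/|E'|)\), for a total of \(O(|E|/|E'|)<\varepsilon\).

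The main obstacle and the precise point where the hypothesis on \(Sp_l(X,T)\) is invoked is the case \(t<0\): then the fibre \(T^{-|t|}(\{x\})\) may contain several preimages, and consistency of \(\varphi\) across them forces \(f_{i,n}\) to depend on \(x\) only through the tower index and level, not on which preimage branch is taken. This is immediate where \(T\) is injective, but breaks at preimages of left special points. Under the hypothesis, \(Sp_l(X,T)\) is a finite set of isolated points, so the full pre-orbit \(\bigcup_{k\ge 0}T^{-k}(Sp_l(X,T))\) is a countable discrete set of isolated singletons in \(X\); on this set I would define \(\varphi\) point-by-point directly from the tower data, consistently across branches, while the tent-function recipe above yields the continuous \(\varphi\) on the complementary set where \(T\) is injective, and the two pieces combine continuously because each isolated singleton is clopen.
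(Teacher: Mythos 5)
Your overall strategy matches the paper's: obtain a tower cover from $\dim_{\rm tow}(X,T)\le d$, build a partition-of-unity-style family of weight functions that decay linearly near the ends of each tower, form $\varphi(x)$ as the resulting convex combination of point masses in $P_d(\Z)$ (the chromatic-number bound $d+1$ controls the simplex dimension), and then run a Lipschitz-type estimate for $(E,\varepsilon)$-equivariance. Your ``tent function peaked at level $n$ of width comparable to $|E'|$'' is an imprecise version of what the paper spells out via the partition of $\Z$ into sets $B_{i,0},\dots,B_{i,N}$ and the weights $k/N$, following Kerr's Theorem 5.2, with the window taken to be the $N$-fold sumset $\sum_N E$ rather than a single enlargement $E'$; these details are recoverable and are not where the proposal goes wrong.

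The genuine gap is in the continuity argument at preimages of left special points. You invoke ``the full pre-orbit $\bigcup_{k\ge 0}T^{-k}(Sp_l(X,T))$'' and assert that $\varphi$ defined piecewise on this set and on its complement ``combine continuously because each isolated singleton is clopen.'' Each singleton is indeed clopen, but an infinite union of isolated points is open and in general \emph{not} closed: the pre-orbit can accumulate at points outside it (in many examples it is dense), and at such accumulation points the continuity of the glued $\varphi$ must be checked, not asserted. So the piecewise definition as you describe it does not automatically give a continuous map. The correct observation, which the paper makes, is that the tower data only ever involves the \emph{finitely many} powers $T^n$ with $n\in S_i$, so the relevant branching locus is $\bigcup_{n\in S_i}Sp_l(X,T^n)\subset\bigcup_{0\le k<\max S_i}T^k(Sp_l(X,T))$, a \emph{finite} set of isolated points, hence clopen. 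Defining the weight at such points by a max over preimages then gives a genuinely continuous function, because the complement of a finite clopen set is also clopen. You would need to replace the full backward orbit by this bounded, finite set and route the definition through the forward images $T^k(Sp_l(X,T))$ (not $T^{-k}$) to control $Sp_l(X,T^n)$, as the paper does.
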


\begin{proof}
We may assume that ${\rm dim}_{\rm tow}(X,T)<\infty$, or else there will be nothing to prove. Write $d={\rm dim}_{\rm tow}(X,T)\in\N$. Fix a finite subset $E\Subset\Z$ and a positive number $\varepsilon>0$. Without loss of generality, by enlarging $E$, we may assume $E=E^{-1}$ and $0\in E$.

Let $N\in\N$ so that $(d+1)(d+2)/N<\varepsilon$. By the definition of the tower dimension, there are finitely many pairs
\[(V_1, S_1), \cdots, (V_s, S_s)\]
with the following properties.

($P_1$) For every $1\le i\le s$, $V_i\subset X$ is an open set and $S_i\Subset\N$ is a finite subset;

($P_2$) For every $1\le i\le s$ and distinct $m,n\in S_i$, $T^{-m}(\overline{V_i})\cap T^{-m}(\overline{V_i})=\varnothing$;

($P_3$) The family $\{T^{-n}(V_i): n\in S_i, 1\le i\le s\}$ has chromatic number at most $d+1$;

($P_4$) The family $\{T^{-n}(V_i): n\in S_i, 1\le i\le s\}$ forms an open cover of $X$;

($P_5$) For every $x\in X$, there is $1\le i\le s$ and $n\in S_i$ such that
\[x\in T^{-n}(V_i)\ \ {\rm and}\ \ \{n\}+\sum_{N\ {\rm copies}}E\subset S_i,\]
where $\sum_{N\ {\rm copies}}E=\{n_1+n_2+\cdots+n_N: n_i\in E, 1\le i\le N\}$ is clearly a finite set. We will simply denote this finite subset by $\sum_NE$. By $(P_2), (P_5)$ and the fact that $X$ is compact, we can take a $\delta>0$ such that, for every $x\in X$, there exist an index $1\le i\le s$, an $n\in S_i$ with
\[d(x, X\setminus T^{-n}(V_i))>\delta\]
and $\{n\}+\sum_NE\subset S_i$. For every $1\le i\le s$ and $n\in S_i$ define the continuous function $\hat{g}_{i,n}$ on $X$ by
\[\textstyle\hat{g}_{i,n}(x)=\min\{1,\frac{1}{\delta}d(x,X\setminus T^{-n}(V_i))\}\]
and define $g_i: X\to [0,1]$ for every $1\le i\le s$ by
\begin{align*}
g_i(x)=
\begin{cases}
\max\{\hat{g}_{i,n}(y): n\in S_i, y=T^{-n}(x)\}, \ \ &{\rm if}\ x\in X\setminus\bigcup_{n\in S_i}Sp_l(X,T^n),\\
\max\{\hat{g}_{i,n}(y): n\in S_i, y\in X, T^n(y)=x\}, &{\rm if}\ x\in\bigcup_{n\in S_i}Sp_l(X,T^n).
\end{cases}
\end{align*}
We note that such $g_i$'s are all continuous. First, for every $n\in\N$, one sees that
\[Sp_l(X,T^n)\subset \bigcup_{0\le k\le n-1}T^k(Sp_l(X,T)),\]
which is hence a finite set consisting of isolated points since $T$ is a local homeomorphism. This then follows that, as a finite union of finite clopen set, $\bigcup_{n\in S_i}Sp_l(X,T^n)$ is a finite clopen set. Note that $g_i$'s are then continuous on the clopen set $X\setminus \bigcup_{n\in S_i}Sp_l(X,T^n)$, because $T^n(y)\mapsto y$ is continuous on $X\setminus \bigcup_{n\in S_i}Sp_l(X,T^n)$. This verifies that $g_i$'s are continuous functions defined on the whole space $X$, taking values in $[0,1]$.

We also note that for every $n\in S_i$, the support of the function $g_i\circ T^n$ is contained in $T^{-n}(V_i)$. To see this, let $x\notin T^{-n}(V_i)$. By the definition of $g_i$'s, we know that
\[g_i\circ T^n(x)=\max\{\hat{g}_{i,m}(y): m\in S_i, y\in T^{-m}(T^n(x))\}.\]
Since $x\notin T^{-n}(V_i)$, $T^n(x)\notin V_i$. Then for every $y\in T^{-m}(T^n(x))$, we have $y\in T^{-m}(V_i^c)=T^{-m}(V_i)^c$, that is, $y\notin T^{-m}(V_i)$. This then immediately follows that,
\[d(y,X\setminus T^{-m}(V_i))=0,\]
whence $\hat{g}_{i,m}(y)=0$, i.e., $g_i\circ T^n(x)=0$. This shows that ${\rm supp}(g_i\circ T^n)\subset T^{-n}(V_i)$.

Let $1\le i\le s$. Set 
\[B_{i,N}=\bigcap_{m\in \sum_NE}(\{m\}+S_i)\ \ {\rm and}\ \ B_{i,0}=\Z\setminus \bigcap_{m\in E}(\{m\}+S_i).\]
For $k=1,2,\cdots,N-1$, define
\[B_{i,k}=\left(\bigcap_{m\in\sum_{k}E}(\{m\}+S_i)\right)\setminus \left(\bigcap_{m\in\sum_{k+1}E}(\{m\}+S_i)\right).\]
Similarly to the arguments in Theorem 5.2 of \cite{K}, the finite sets $B_{i,k}$ for $k=0,1,\cdots,N$ form a partition of $\Z$, such that for every $m\in E$, they satisfy the following properties:

(1) $\{m\}+B_{i,0}\subset B_{i,0}\cup B_{i,1}$;

(2) $\{m\}+B_{i,k}\subset B_{i,k-1}\cup B_{i,k}\cup B_{i,k+1}$ for $k=1,2,\cdots,N-1$;

(3) $\{m\}+B_{i,N}\subset B_{i,N-1}\cup B_{i,N}$.

\noindent For every $m\in\Z$, take $k$ such that $m\in B_{i,k}$. Then define the function
\begin{align*}
\hat{h}_{i,m}(x)=
\begin{cases}
\frac{k}{N}\cdot g_i\circ T^m(x), \ \ &{\rm if}\ m\ge0,\\
\frac{k}{N}\cdot g_i\circ T^m(x), &{\rm if}\ m<0\ {\rm and}\ x\in X\setminus\bigcup_{n\in B_{i,k}\cap\N_-}Sp_l(X,T^{-n})\\
\frac{k}{N}\cdot \max\{g_i(y): y\in T^m(x)\}, &{\rm if}\ m<0\ {\rm and}\ x\in \bigcup_{n\in B_{i,k}\cap\N_-}Sp_l(X,T^{-n}).
\end{cases}
\end{align*}
Upon using the same argument showing the continuity of $g_i$'s, we could claim that $\hat{h}_{i,m}\in C(X)$, and also it is straightforward to verify that $|\hat{h}_{i,m}(y)-\hat{h}_{i,m-n}(x)|\le 1/N$ for all $x\in X$, $y\in T^n(\{x\})$ and $n\in E$. Set
\[H=\sum_{1\le i\le s}\sum_{m\in\Z}\hat{h}_{i,m}.\]
Note that $H$ is well-defined. Since for every $x\in X$, there is $1\le i\le s$ and $n\in S_i$ such that $d(x,X\setminus T^{-n}(V_i))>\delta$, and $\{n\}+\sum_NE\subset S_i$, then $n\in B_{i,N}$, which follows that
\[\hat{h}_{i,n}(x)\ge \hat{g}_{i,n}(x)=1.\]
This then implies that $H\ge1$.  Let $1\le i\le s$ and $n\in\Z$, let
\[h_{i,n}=\hat{h}_{i,n}/H,\]
and $\varphi: X\to P_d(\Z)$ defined by
\[\varphi(x)(m)=\sum_{1\le i\le s}h_{i,m}(x).\]
for $x\in X$ and $m\in\Z$. One observes that $\varphi$ is continuous since so is each $h_{i,m}$.

Now it only suffices to show that $\varphi$ is $(E,\varepsilon)$-equivariant. On one hand, by the definition of $h_{i,m}$ and $\hat{h}_{i,m}$, and using the same argument of the last paragraph of Theorem 5.2 in \cite{K}, one calculates that
\begin{align*}
|h_{i,m}(y)-h_{i,m-n}(x)|&\le (1/H(y))|\hat{h}_{i,m}(y)-\hat{h}_{i,m-n}(x)|+|(1/H(y))-(1/H(x))|\hat{h}_{i,m-n}(x)\\
&\le (d+2)/N
\end{align*}
for every $x\in X$, $n\in E$, $m\in\Z$ and $y\in T^n(\{x\})$.  On the other hand, also note that since for every $x\in X$, the set of all $i\in\{1,2,\cdots,s\}$ such that $x\in \bigcup_{n\in S_i}T^{-n}(V_i)$ has cardinality at most $d+1$, which implies that, for every $x\in X$, $n\in E$ and $y\in T^n(\{x\})$, we have
\begin{align*}
\rho(\varphi(y),\af_n(\varphi(x)))&=\sum_{m\in\Z}|\varphi(y)(m)-\varphi(x)(m-n)|\\
&\le \sum_{m\in\Z}\sum_{1\le i\le s}|h_{i,m}(y)-h_{i,m-n}(x)|\\
&\le (d+1)(d+2)/N<\varepsilon.
\end{align*}
The Lemma then follows.
\end{proof}

\begin{cor}
Let $X$ be a one-sided shift space with $\sigma(X)=X$. If $X$ is aperiodic and has nonsuperlinear-growth complexity, then
\[{\rm dim}_{\rm am}(\tildeX,\sigma_{\tildeX})\le 4\lceil 2d\rceil+3,\]
where $d=\liminf_{n\to\infty}p_X(n)/n$.
\end{cor}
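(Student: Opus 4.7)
The plan is to simply chain the tools already assembled in the paper, in direct parallel to the proof of Corollary \ref{corr}. The statement asks for an upper bound on ${\rm dim}_{\rm am}(\tildeX,\sigma_{\tildeX})$ for the cover system of an aperiodic one-sided subshift $X$ with nonsuperlinear-growth complexity, and each link in the chain has already been forged earlier in the excerpt.

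First, I would transfer the hypotheses on $X$ to structural facts about its cover system. Since $X$ has nonsuperlinear-growth complexity, Lemma \ref{useful} gives $|Sp_l(X,\sigma)|\le\lceil 2d\rceil<\infty$, where $d=\liminf_{n\to\infty}p_X(n)/n$. Combined with the aperiodicity of $\sigma$, Theorem \ref{shift} then asserts that $\tildeX$ is a zero-dimensional compact metric space, $\sigma_{\tildeX}$ is an aperiodic surjective local homeomorphism, $|Sp_l(\tildeX,\sigma_{\tildeX})|=|Sp_l(X,\sigma)|\le\lceil 2d\rceil$, and every point of $Sp_l(\tildeX,\sigma_{\tildeX})$ is isolated in $\tildeX$. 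Thus $(\tildeX,\sigma_{\tildeX})$ meets the standing hypotheses for both Lemma \ref{fad} and Corollary \ref{corr}.

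Second, I would invoke Corollary \ref{corr} applied to the cover system itself (or equivalently retrace it: Theorem \ref{zdr} yields ${\rm dim}_{\rm Rok}(\tildeX,\sigma_{\tildeX})\le 2\lceil 2d\rceil+1$, and Lemma \ref{td1} upgrades this to ${\rm dim}_{\rm tow}(\tildeX,\sigma_{\tildeX})\le 4\lceil 2d\rceil+3$). Because the left special set of the cover is finite and consists of isolated points, Lemma \ref{fad} applies and gives
\[
{\rm dim}_{\rm am}(\tildeX,\sigma_{\tildeX})\le {\rm dim}_{\rm tow}(\tildeX,\sigma_{\tildeX})\le 4\lceil 2d\rceil+3,
\]
which is exactly the claimed inequality.

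There is no serious obstacle here, as the work has already been done; the only point requiring a moment's attention is verifying that the structural conclusions of Theorem \ref{shift} (finite left special set plus isolation in $\tildeX$) match the precise hypotheses demanded by Lemma \ref{fad}, rather than those demanded by Lemma \ref{td1} or Theorem \ref{zdr}, which are slightly weaker. Once this compatibility is noted, the proof reduces to a citation of Corollary \ref{corr}, Lemma \ref{fad}, Theorem \ref{shift}, and Lemma \ref{useful}.
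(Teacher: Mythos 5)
Your proposal is correct and follows exactly the same route as the paper: it cites Theorem \ref{shift} for the structural facts about the cover (finite left special set consisting of isolated points), Corollary \ref{corr} for the tower-dimension bound $4\lceil 2d\rceil+3$, and Lemma \ref{fad} to pass from tower dimension to amenability dimension. Your extra mention of Lemma \ref{useful} is already subsumed in the proof of Corollary \ref{corr}, so the two arguments are the same.
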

\begin{proof}
This follows from Theorem \ref{shift}, Lemma \ref{fad} and Corollary \ref{corr}.
\end{proof}

\subsection{The dynamic asymptotic dimension of a local homeomorphism}

\begin{df}[\cite{GWY}, Definition 5.1]\label{dad}
Let $\mathcal{G}$ be an $\acute{\rm e}$tale groupoid. Then we say $\mathcal{G}$ has {\it dynamic asymptotic dimension} $d\in\N$, and write ${\rm dad}(\mathcal{G})=d$, if $d$ is the smallest number with the following property.

For every open relatively compact subset $K$ of $\mathcal{G}$ there are open subsets $U_0, U_1,\cdots,U_d$ of $\mathcal{G}^0$ that covers $s(K)\cup r(K)$ such that for each $i$, the set 
\[\{g\in K: s(g),r(g)\in U_i\}\]
is contained in a relatively compact subgroupoid of $\mathcal{G}$.
\end{df}

\begin{lem}\label{lmeq}
Let $X$ be a compact metric space, $T: X\to X$ a surjective local homeomorphism. Let $\varphi: X\to P_d(\Z)$ be an $(E,\varepsilon)$-equivariant map, where the positive number $\varepsilon$, the natural number $d\in\N$ and the finite subset $E\Subset \Z$ are given. Then there exists a finite subset $S\subset\Z$ and an $(E,\varepsilon)$-equivariant map $\varphi':X\to P_d(\Z)$ such that 
\[\varphi'(X)\subset P(S)\cap P_d(\Z).\]
\end{lem}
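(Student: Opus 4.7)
The plan is to exploit the strict inequality in the $(E,\varepsilon)$-equivariance condition to extract some slack $\varepsilon' < \varepsilon$, then use compactness of $\varphi(X)$ in $\ell^1(\Z)$ to truncate $\varphi$ to a finite window $S$, and finally renormalize and verify that the total variation error created is absorbed by the slack.

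First I would extract slack from the strict equivariance. For each $n\in E$, the set $R_n=\{(x,y)\in X\times X : y\in T^n(\{x\})\}$ is closed in $X\times X$: for $n\ge 0$ it is the graph of $T^n$, and for $n<0$ it is the set $\{(x,y):T^{|n|}(y)=x\}$, both of which are closed by continuity of the iterates of $T$. Being closed in a compact space, $R_n$ is compact, and the function $(x,y)\mapsto \rho(\varphi(y),\alpha_n(\varphi(x)))$ is continuous on $R_n$ and strictly less than $\varepsilon$; by compactness its supremum $\varepsilon_n$ on $R_n$ is strictly less than $\varepsilon$. Since $E$ is finite, $\varepsilon':=\max_{n\in E}\varepsilon_n<\varepsilon$. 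Fix $\eta>0$ with $4\eta<\varepsilon-\varepsilon'$.

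Next I would invoke tightness. The image $\varphi(X)$ is a compact subset of $P_d(\Z)\subset \ell^1(\Z)$, where the metric $\rho$ is the $\ell^1$-norm used in the proof of Lemma \ref{fad}. By the standard characterization of relative compactness in $\ell^1$ (uniform smallness of tails), there is a finite subset $S\subset\Z$ with $\sum_{m\notin S}\varphi(x)(m)<\eta$ for every $x\in X$. Set $a(x)=\sum_{m\in S}\varphi(x)(m)\in(1-\eta,1]$; continuity of $\varphi$ and finiteness of $S$ make $a(\cdot)$ continuous, and define
\[
\varphi'(x)(m)=
\begin{cases}
\varphi(x)(m)/a(x), & m\in S,\\
0, & m\notin S.
\end{cases}
\]
Then $\mathrm{supp}(\varphi'(x))\subset \mathrm{supp}(\varphi(x))\cap S$ has cardinality at most $d+1$, so $\varphi'(x)\in P_d(\Z)\cap P(S)$, and $\varphi'$ is continuous.

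To verify $(E,\varepsilon)$-equivariance, I would compute directly
\[
\|\varphi(x)-\varphi'(x)\|_1=\sum_{m\notin S}\varphi(x)(m)+\sum_{m\in S}\varphi(x)(m)\bigl(1/a(x)-1\bigr)=2(1-a(x))<2\eta.
\]
Since the $\Z$-action on $P_d(\Z)$ is by translation, each $\alpha_n$ is an $\ell^1$-isometry, so for every $n\in E$ and $(x,y)\in R_n$ the triangle inequality gives
\[
\rho(\varphi'(y),\alpha_n\varphi'(x))\le \|\varphi'(y)-\varphi(y)\|_1+\rho(\varphi(y),\alpha_n\varphi(x))+\|\alpha_n\varphi(x)-\alpha_n\varphi'(x)\|_1<2\eta+\varepsilon'+2\eta<\varepsilon,
\]
completing the proof. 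The main technical point is the tightness step (the Kolmogorov--Riesz-type criterion for $\ell^1$), which upgrades compactness of $\varphi(X)$ in the product topology on $P_d(\Z)$ to uniform control of tails; everything else is a routine perturbation argument, once the strict inequality in the definition of $(E,\varepsilon)$-equivariance is converted into a uniform bound via compactness of the graphs $R_n$.
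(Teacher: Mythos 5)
Your proof is correct and follows essentially the same route as the paper: truncate $\varphi$ to a finite window $S$, renormalize, and absorb the $\ell^1$-error into the strict slack in the equivariance inequality. Your execution is in fact cleaner than the paper's---the triangle-inequality step sidesteps the paper's slip of treating the normalizing constants $\sum_{k\in S}t_k(y)$ and $\sum_{k\in S}t_k(x)$ as if they were equal when computing $\rho(\varphi'(y),\alpha_n(\varphi'(x)))$, and you explicitly justify $\varepsilon'<\varepsilon$ by compactness of the correspondence graphs $R_n$, which the paper leaves implicit in asserting $\delta>0$.
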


\begin{proof}
Since for any $\delta>0$, the family
\[\{B_{\delta}(P(S)\cap P_d(\Z)): S\Subset\Z\ {\rm a\ finite\ set}\}\]
is an open cover of $P_d(\Z)$, which follows that for every compact set $K\subset P_d(\Z)$, and any $\delta>0$, there is a finite subset $S\subset \Z$ with $K\subset B_\delta(P(S)\cap P_d(\Z))$. Let
\[\delta=\min\left\{1,\frac{2}{\varepsilon}\min\{\varepsilon-\sup\{\rho(\varphi(y),\af_n\varphi(x)): y\in T^n(\{x\})\}:n\in E\}\right\}>0.\]
Let $S$ be a finite subset with 
\[\varphi(X)\subset B_{\delta/2}(P(S)\cap P_d(\Z)).\]
Write $\varphi(x)=\sum_{n\in\Z}t_n(x)\cdot\mu_n$, where $\mu_n$ is the Dirac measure on $n$ and $t_n: X\to [0,1]$ is a continuous function. One then sees that for any $\mu=\sum_{n\in S}s_n\mu_n\in P(S)$, 
\[\rho(\mu,\varphi(x))=\sum_{n\in S}|s_n-t_n(x)|+\sum_{n\notin S}t_n(x)\ge \sum_{n\notin S}t_n(x).\]
Upon taking the infimum over all $\mu\in P(S)$, and by the fact that $\varphi(X)\subset B_{\delta/2}(P(S)\cap P_d(\Z))$, we have
\[\sum_{n\notin S}t_n(x)<\delta/2.\]
Define
\[\varphi'(x)=\sum_{n\in S}\frac{t_n(x)}{\sum_{n\in S}t_n(x)}\mu_n,\]
where $\sum_{n\in S}t_n(x)>1-\delta/2$. We then see that $\varphi': X\to P(S)\cap P_d(\Z)$ is a continuous function, and for any $x\in X$,
\[\rho(\varphi(x), \varphi'(x))=\sum_{n\in S}t_n(x)\left(\frac{1}{\sum_{n\in S}t_n(x)}-1\right)+\sum_{n\notin S}t_n(x)=2\left(1-\sum_{n\in S}t_n(x)\right)<\delta.\]
This follows that, for any $n\in E$, $x,y\in X$ with $y\in T^n(\{x\})$,  we have
\begin{align*}
\rho(\varphi'(y),\af_n(\varphi'(x)))&=\sum_{m\in \Z}\frac{|t_m(y)-t_{m+n}(x)|}{\sum_{k\in S} t_k(y)}\\
&\le \frac{\varepsilon-(\varepsilon-\rho(\varphi(y),\af_n(\varphi(x))))}{1-\delta/2}\\
&\le \frac{\varepsilon-\delta\varepsilon/2}{1-\delta/2}=\varepsilon,
\end{align*}
which verifies that $\varphi'$ is $(E,\varepsilon)$-equivariant.
\end{proof}

\begin{lem}[Lemma 4.1, \cite{GWY}]\label{4.1}
Let $d\in\N$ and $\af: \Z\curvearrowright P_d(\Z)$ be the simplicial action. Let $\delta>0$.

For each $i\in\{0,1,\cdots,d\}$, define
\[V_i=B_{1/(3\cdot10^i)}(P_{i}(\Z))\setminus \overline{B_{5/(2\cdot10^i)}(P_{i-1}(\Z))}.\]
Then the collection $\{V_0,V_1,\cdots,V_d\}$ is an open cover of $P_d(\Z)$, consisting of $\Z$-invariant subsets. 

Moreover, for each $i\in\{0,1,\cdots,d\}$ and $i$-simplex $\Delta$, define
\[V_{i,\Delta}=B_{1/(3\cdot10^i)}(\Delta)\setminus \overline{B_{5/(2\cdot10^i)}(P_{i-1}(\Z))}.\]
Then 
\[V_i=\bigcup_{\Delta\ {\rm an}\ i-{\rm simplex}}V_{i,\Delta},\]
the action $\af$ permutes the distinct $V_{i,\Delta}$, and for any two distinct $i$-simplices $\Delta$ and $\Delta'$, we have
\begin{align}\label{inq}
{\rm dist}(V_{i,\Delta}, V_{i,\Delta'})\ge \frac{1}{3\cdot10^i}\ge \frac{1}{3\cdot10^d}.
\end{align}
\end{lem}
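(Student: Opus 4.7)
\emph{Proof proposal.} The lemma is a purely geometric statement about $P_d(\Z)$ with its $\ell^1$-metric and the simplicial $\Z$-action; the specific constants $1/(3 \cdot 10^i)$ and $5/(2 \cdot 10^i)$ are tuned so that the successive triangle-inequality estimates fit together. The plan is to verify the four assertions in turn: the covering property, the $\Z$-invariance, the decomposition into $V_{i,\Delta}$, and finally the distance bound.

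To see that $\{V_0, \ldots, V_d\}$ covers $P_d(\Z)$, I would pick $x \in P_d(\Z)$ and let $i$ be the smallest index in $\{0, 1, \ldots, d\}$ with $d(x, P_i(\Z)) < 1/(3 \cdot 10^i)$; such an $i$ exists because $x \in P_d(\Z)$. Minimality gives $d(x, P_{i-1}(\Z)) \ge 1/(3 \cdot 10^{i-1}) = 10/(3 \cdot 10^i) > 5/(2 \cdot 10^i)$, with the case $i = 0$ trivial because $P_{-1}(\Z) = \varnothing$; hence $x \in V_i$. For invariance, the simplicial action just translates coordinates and so acts by $\ell^1$-isometries while preserving every skeleton $P_j(\Z)$; therefore it preserves each ball $B_r(P_j(\Z))$ and each $V_i$. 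The decomposition $V_i = \bigcup_\Delta V_{i,\Delta}$ is immediate from $P_i(\Z) = \bigcup_\Delta \Delta$, and translation permutes the $i$-simplices and hence the $V_{i,\Delta}$.

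The crux is the distance inequality. I would first establish the core geometric fact: for distinct $i$-simplices $\Delta, \Delta'$ with vertex sets $V, V'$ satisfying $V \cap V' \ne \varnothing$, and arbitrary $p \in \Delta$, $p' \in \Delta'$,
\[d(p, p') \ge d(p, \Delta \cap \Delta') + d(p', \Delta \cap \Delta').\]
This comes from a direct $\ell^1$ computation showing that the mass of $p$ lying outside $V \cap V'$ equals $d(p, \Delta \cap \Delta')$ (symmetrically for $p'$), and that these two quantities are lower bounds for the mutual $\ell^1$-difference. The case $V \cap V' = \varnothing$ is even easier because then $d(\Delta, \Delta') = 2$. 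Given this, assume for contradiction that $x \in V_{i,\Delta}$ and $y \in V_{i,\Delta'}$ with $d(x, y) < 1/(3 \cdot 10^i)$. Pick $p \in \Delta$, $p' \in \Delta'$ with $d(x, p), d(y, p') < 1/(3 \cdot 10^i)$, so $d(p, p') < 1/10^i$; the key inequality then yields $d(p, \Delta \cap \Delta') < 1/10^i$, and since $\Delta \cap \Delta' \subset P_{i-1}(\Z)$, this forces $d(x, P_{i-1}(\Z)) < 1/(3 \cdot 10^i) + 1/10^i = 4/(3 \cdot 10^i) < 5/(2 \cdot 10^i)$, contradicting $x \in V_{i,\Delta}$.

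The main obstacle, which I would tackle first, is the two-point $\ell^1$-inequality above; it requires careful bookkeeping of the masses of $p$ and $p'$ on the three index sets $V \setminus V'$, $V' \setminus V$, and $V \cap V'$, and it is what pins down the specific choice of constants in the definition of $V_i$ and $V_{i,\Delta}$. Everything else is triangle-inequality chasing and elementary topology.
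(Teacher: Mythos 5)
The paper cites this as Lemma 4.1 of Guentner--Willett--Yu and does not reprove it, so there is no in-paper argument to compare against; your overall structure (covering by taking the minimal index $i$, invariance via the simplicial action being a skeleton-preserving isometry, decomposition over $i$-simplices, and the distance bound via a two-point $\ell^1$ estimate at the shared face $\Delta\cap\Delta'$) is the right one and follows the standard proof.

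However, your ``core geometric fact'' $d(p,p') \ge d(p,\Delta\cap\Delta') + d(p',\Delta\cap\Delta')$ is false as stated, and the preceding sentence contains the source of the error: the mass of $p$ outside $V\cap V'$, say $a = \sum_{v\in V\setminus V'} p_v$, is not $d(p,\Delta\cap\Delta')$ but half of it. Projecting $p$ onto $\Delta\cap\Delta'$ costs $a$ for deleting the outside mass and another $a$ for restoring the total mass to $1$, so $d(p,\Delta\cap\Delta') = 2a$; meanwhile $\|p-p'\|_1 = a + a' + \sum_{v\in V\cap V'}|p_v - p'_v| \ge a + a'$, which gives only the halved inequality $d(p,p') \ge \tfrac12\bigl(d(p,\Delta\cap\Delta')+d(p',\Delta\cap\Delta')\bigr)$. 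A minimal counterexample to your version: take $p=\tfrac12\delta_0+\tfrac12\delta_1$, $p'=\tfrac12\delta_1+\tfrac12\delta_2$, so $\Delta\cap\Delta'=\{\delta_1\}$; then $d(p,p')=1$ while $d(p,\Delta\cap\Delta')=d(p',\Delta\cap\Delta')=1$, and $1\not\ge 2$. Fortunately the constants in the lemma have enough slack to absorb the factor of $2$: with the corrected inequality you get $d(p,\Delta\cap\Delta')< 2/10^i$, hence $d(x,P_{i-1}(\Z)) < 1/(3\cdot10^i)+2/10^i = 7/(3\cdot10^i)$, and $7/3 < 5/2$ still yields the desired contradiction with $x\notin\overline{B_{5/(2\cdot 10^i)}(P_{i-1}(\Z))}$. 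So the proof is rescuable with a one-line correction, but the key inequality as you wrote it is wrong, and since you explicitly flagged it as ``the crux,'' it needs to be stated correctly.
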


\begin{lem}\label{fdad}
Let $X$ be a compact metric space and $T: X\to X$ be a surjective local homeomorphism. If $Sp_l(X,T)$ is a finite subset of isolated points of $X$, then
\[{\rm dad}(\mathcal{G}_{(X,T)})\le {\rm dim}_{\rm am}(X,T).\]
\end{lem}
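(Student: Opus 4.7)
The plan is to run the standard cover-and-factorization argument (compare \cite{GWY} and \cite{K}) adapted to the Deaconu--Renault groupoid of a surjective local homeomorphism. Assume $d := {\rm dim}_{\rm am}(X,T) < \infty$, as otherwise the statement is vacuous, and fix an open relatively compact $K \subset \mathcal{G}_{(X,T)}$. Since $\mathcal{G}_{(X,T)}$ carries a base of open bisections of the shape $\{(x, n_1-n_2, y) : x \in U,\, y \in V,\, T^{n_1}(x) = T^{n_2}(y)\}$ with $T^{n_1}|_U$ and $T^{n_2}|_V$ homeomorphisms, a compactness argument produces a uniform $M \in \N$ such that every $g = (x, n, y) \in \overline{K}$ admits a factorization $g = h_1 \cdot h_2^{-1}$ with $h_1 = (x, n_1, z)$, $h_2 = (y, n_2, z)$, $0 \le n_1, n_2 \le M$, $n = n_1 - n_2$, and $z = T^{n_1}(x) = T^{n_2}(y)$. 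Set $E = \{0, 1, \ldots, M\}$ and $\varepsilon = 1/(6 \cdot 10^d)$; by the definition of ${\rm dim}_{\rm am}$ there is an $(E, \varepsilon)$-equivariant continuous map $\varphi: X \to P_d(\Z)$, which by Lemma~\ref{lmeq} may be arranged with $\varphi(X) \subset P(S) \cap P_d(\Z)$ for some finite $S \subset \Z$.

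Next, let $\{V_i\}_{i=0}^d$ be the $\Z$-invariant open cover of $P_d(\Z)$ produced by Lemma~\ref{4.1}, with $V_i = \bigcup_{\Delta} V_{i,\Delta}$ its decomposition into pairwise $1/(3 \cdot 10^d)$-separated open components. Define $U_i := \varphi^{-1}(V_i)$; these are open, cover $X$, and thus cover $s(K) \cup r(K)$. The family $\mathcal{F}_i := \{\Delta : V_{i,\Delta} \cap \varphi(X) \ne \varnothing\}$ is finite, since $\varphi(X) \cap V_i$ is compact and the $V_{i,\Delta}$ are pairwise separated. Now fix $i$ and take $g = (x, n, y) \in K$ with $x, y \in U_i$, factored as above. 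The $(E,\varepsilon)$-equivariance yields $\rho(\varphi(z), \af_{n_1}\varphi(x)) < \varepsilon$ and $\rho(\varphi(z), \af_{n_2}\varphi(y)) < \varepsilon$; applying the isometry $\af_{-n_2}$ gives $\rho(\af_n \varphi(x), \varphi(y)) < 2\varepsilon < 1/(3 \cdot 10^d)$. Writing $\varphi(x) \in V_{i, \Delta_x}$, $\varphi(y) \in V_{i, \Delta_y}$, one has $\af_n \varphi(x) \in V_{i, n + \Delta_x}$, so the separation estimate in \eqref{inq} forces $\Delta_y = n + \Delta_x$; in particular $n$ lies in the finite set $F_i := \{m \in \Z : \Delta + m \in \mathcal{F}_i \text{ for some } \Delta \in \mathcal{F}_i\}$.

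To conclude I would take $\mathcal{H}_i$ to be the subgroupoid of $\mathcal{G}_{(X,T)}$ generated by $L_i := \{g \in K : s(g), r(g) \in U_i\}$. An induction on composition length, using that the simplex-shift relation $\Delta_{\varphi(y)} = m + \Delta_{\varphi(x)}$ is additive under multiplication and reverses under inversion, shows that every element $(x, m, y) \in \mathcal{H}_i$ has both endpoints in $U_i$ and satisfies $\Delta_{\varphi(y)} = m + \Delta_{\varphi(x)}$, so $m \in F_i$. Hence $\mathcal{H}_i$ has cocycles in the finite set $F_i$, is contained in the relatively compact subset $X \times F_i \times X$ of $X \times \Z \times X$, and is therefore relatively compact in $\mathcal{G}_{(X,T)}$. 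The sets $U_0, \ldots, U_d$ then witness ${\rm dad}(\mathcal{G}_{(X,T)}) \le d = {\rm dim}_{\rm am}(X,T)$. The main technical hurdles will be securing the uniform factorization bound $M$ on $\overline{K}$ and verifying that the simplex-shift relation persists under groupoid multiplication, so that the generated subgroupoid's cocycles remain bounded; the hypothesis on $Sp_l(X,T)$ enters only indirectly, via Lemma~\ref{fad}, to ensure that the amenability-dimension bound is finite and hence the present inequality nonvacuous.
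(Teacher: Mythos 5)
Your factorization of $g=(x,n,y)\in K$ through a common image $z=T^{n_1}(x)=T^{n_2}(y)$, with two applications of $(E,\varepsilon)$-equivariance and a triangle inequality, is a genuine simplification of one step: the paper instead has to establish $T^{m-n}(x)=y$, which requires restricting to the complement of the orbits of the special elements (so that $T$-preimages are unique there). Your route avoids that. The simplex-shift bookkeeping ($\Delta_{\varphi(r(g))}=m+\Delta_{\varphi(s(g))}$ additive under products, reversed under inverses) is also correct and shows that every $g\in\mathcal{H}_i$ has cocycle in a finite set $F_i$.

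The problem is the last sentence: containment in $X\times F_i\times X$ does \emph{not} imply relative compactness in $\mathcal{G}_{(X,T)}$. The Deaconu--Renault topology is strictly finer than the subspace topology from $X\times\Z\times X$, and for a non-injective local homeomorphism the cocycle fibres $\mathcal{G}_k=\{(x,k,y)\in\mathcal{G}\}$ are only $\sigma$-compact: they are the increasing unions $\bigcup_{m-n=k}Z(X,X,m,n)$, where $Z(X,X,m,n)=\{(x,m-n,y):T^m(x)=T^n(y)\}$ is compact clopen, but the union is strictly increasing whenever a special element occurs as a common image. A subset of $\mathcal{G}$ is relatively compact precisely when it fits inside finitely many $Z(X,X,m,n)$; controlling the cocycle $m-n$ alone does not control the minimal witnessing exponents $m,n$, and those grow without bound under groupoid multiplication. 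So your $\mathcal{H}_i$ need not be relatively compact, and the argument does not close.

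This is exactly where the hypothesis on $Sp_l(X,T)$ must enter, contrary to your final remark that it appears ``only indirectly.'' The paper's definition of $U_i$ is $\varphi^{-1}(V_i)\cup\bigcup_{k}{\rm Orb}_T(\omega_k)$, and the point of the extra discrete piece is structural. Because the special elements are isolated, $\bigcup_k{\rm Orb}_T(\omega_k)$ is open and discrete, its complement $Y$ is compact, and $T|_Y$ is a homeomorphism of $Y$ onto itself; hence the restricted groupoid $\mathcal{G}|_Y$ is a genuine transformation groupoid whose cocycle fibres $Y\times\{k\}$ \emph{are} compact. Meanwhile, the part of $\{g\in K:s(g),r(g)\in U_i\}$ with endpoints on the orbits is finite, since $K$ is relatively compact and the orbits are discrete in $\mathcal{G}$. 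Your $U_i=\varphi^{-1}(V_i)$, which omits the orbits, cannot separate these two regimes, so you should fold this dichotomy back into the argument before trying to conclude relative compactness.
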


\begin{proof}
We may assume that ${\rm dim}_{\rm am}(X,T)<\infty$, or else there will be nothing to be shown. Denote $d={\rm dim}_{\rm am}(X,T)$ and $\mathcal{G}=\mathcal{G}_{(X,T)}$ for abbreviation.

Let $K\subset \mathcal{G}\subset X\times\Z\times X$ be an open relatively compact subset arbitrarily. It is then clear that
\[K\subset X\times E\times X\]
for some finite subset $E\Subset\Z$. Without loss of generality, we may assume that $K$ is of this form, by enlarging $K$.

Write $Sp_l(X,T)=\{\omega_1,\cdots,\omega_q\}$ for $q=|Sp_l(X,T)|$. Let $\varepsilon=\frac{1}{3\cdot10^d}$. Then there is a continuous map $\varphi: X\to P_d(\Z)$ such that
\begin{align}\label{eqv}
\textstyle\sup\{\rho(\varphi(y),\af_{n}(\varphi(x))): y\in T^{n}(\{x\}), n\in E\}<\frac{1}{3\cdot10^d}.
\end{align}

By Lemma \ref{lmeq}, we may assume $\varphi(X)\subset P_d(\Z)\cap P(S)$ for some finite set $S\Subset\Z$. Define 
\[F=\{n\in\Z: (n+S)\cap S\ne\varnothing\}.\]
It is clear that $F$ is a finite subset of $\Z$, as $S$ is finite.

Now for every $i\in\{0,1,\cdots,d\}$ and every $i$-simplex $\Delta$ in $P_d(\Z)$, let 
\[V_i\subset P_d(\Z)\ \ {\rm and}\ \ V_{i,\Delta}\subset V_i\]
be as in Lemma \ref{4.1}, and define
\[\tilde{U}_i=\varphi^{-1}(V_i)\ \ {\rm and}\ \ \tilde{U}_{i,\Delta}=\varphi^{-1}(V_{i,\Delta}).\]
It is then immediate that $\{\tilde{U}_0,\tilde{U}_1,\cdots,\tilde{U}_d\}$ is an open cover of $X$, and that every $\tilde{U}_i$ is a disjoint union of $\tilde{U}_{i,\Delta}$'s as $\Delta$ is taken over all $i$-simplices. We claim that, for any $i\in\{0,\cdots,d\}$ and any $i$-simplex $\Delta$, if

(i) $x\in \tilde{U}_{i,\Delta}$;

(ii) $n\in E$;

(iii) $T^n(\{x\})\subset\tilde{U}_i$, 

\noindent then $T^n(\{x\})\subset \tilde{U}_{i,\af_n(\Delta)}$.
In fact, let $y\in T^n(\{x\})$ be an arbitrary point. Since $T^n(\{x\})\subset \tilde{U}_i$ and $\tilde{U}_i$ is a disjoint union of $\tilde{U}_{i,\Delta}$'s,  $y\in \tilde{U}_{i,\Delta'}$ for some $i$-simplex $\Delta'$. By \eqref{eqv}, we know that
\[\|\varphi(y)-\af_n(\varphi(x))\|_1<1/(3\cdot10^d).\]
Note that since $x\in \tilde{U}_{i,\Delta}=\varphi^{-1}(V_{i,\Delta})$, $\af_n(\varphi(x))\in\af_n(V_{i,\Delta})=V_{i,\af_n(\Delta)}$ and $\varphi(y)\in V_{i,\Delta'}$. Then the inequality \eqref{inq} yields that $\Delta'=\af_n(\Delta)$, whence $y\in \tilde{U}_{i,\af_n(\Delta)}$.

To complete the proof, for $i=0,1,\cdots,d$, define
\[\textstyle U_i=\{(x,0,x)\in \mathcal{G}^{(0)}: x\in \tilde{U}_i\}\cup\bigcup_{1\le k\le q}{\rm Orb}(\omega_k).\]
Now it is only remained to show that $U_0,U_1,\cdots,U_d$ satisfy the properties in Definition \ref{dad}.

$\bullet$ The $U_i$'s are open subsets of $\mathcal{G}^{(0)}$: Every $U_i$ is a union of two parts: $\{(x,0,x): x\in\tilde{U}_i\}$ and $\bigcup_{1\le k\le q}{\rm Orb}(\omega_k)$.  The first part is open because every $\tilde{U}_i$ is open; the second part is open as well, since every $\omega_k$ is an isolated point, and, $T$ is an open continuous map on a compact metric space, as a local homeomorphism.

$\bullet$ The $U_i$'s cover $r(K)\cup s(K)$: This is immediate since $\tilde{U}_i$'s have already covered $X$.

$\bullet$ Now it suffices to shown that for each $i\in\{0,1,\cdots,d\}$, the set
\[\{g\in K: s(g), r(g)\in U_i\}\]
is contained in a relatively compact subgroupoid of $\mathcal{G}$: Let $g=(x,m-n,y)\in K$ for some $x\in X$, $m,n\in\N$ with $T^m(x)=T^n(y)$, $m-n\in E$ and
\[s(g)=(y,0,y)\in U_i\ \ {\rm and}\ \ r(g)=(x,0,x)\in U_i.\]
We divide this into the following two cases:

(1) If $x,y\notin {\rm Orb}(\omega_k)$ for any $1\le k\le q$. Note that this follows $x,y\in \tilde{U}_i$.  Without loss of generality, we may assume $m\ge n$. This then implies that $T^{m-n}(x)=y$. Let $x\in\tilde{U}_{i,\Delta}$ for some $i$-simplex $\Delta$. By applying the above claim to $x$ and $m-n$, we have $y=T^{m-n}(x)\in\tilde{U}_{i,\af_{m-n}(\Delta)}$. Note that this implies 
\[\varphi(x)\in V_{i,\Delta}\ \ {\rm and}\ \ \varphi(T^{m-n}(x))\in V_{i, \af_{m-n}(\Delta)}.\]
Since both $\varphi(T^{m-n}(x))$ and $\varphi(x)$ are supported on $S$, this forces $((m-n)+S)\cap S\ne\varnothing$. This means that $m-n\in F$, which is a finite set. Therefore, $g$ is contained in the subgroupoid $\mathcal{G}_1$ of $\mathcal{G}$ generated by $X\times F\times X$, which is relatively compact;

(2) If either $x$ or $y$ is in ${\rm Orb}(\omega_k)$ for some $1\le k\le q$. Note that this is equivalent to say that both $x$ and $y$ are in ${\rm Orb}(\omega_k)$, since $T^m(x)=T^n(y)$. As $\bigcup_{1\le k\le q}{\rm Orb}(\omega_k)$ is a discrete subset of $\mathcal{G}$ and $K$ is relatively compact, the set
\[\textstyle\{g=(x,m-n,y)\in K: x,y\in \bigcup_{1\le k\le q}{\rm Orb}(\omega_k), s(g), r(g)\in U_i\}\]
is clearly finite. Therefore, it is contained in a subgroupoid $\mathcal{G}_2$ of $\mathcal{G}$, which is also relatively compact.

Finally, let $\tilde{G}$ be the subgroupoid of $\mathcal{G}$ generated by $\mathcal{G}_1$ and $\mathcal{G}_2$. The above argument concludes that $\{g\in K: s(g), r(g)\in U_i\}$ is contained in $\mathcal{G}$, which is relatively compact, since so are $\mathcal{G}_1$ and $\mathcal{G}_2$. This completes the proof.
\end{proof}

\subsection{The finiteness of dynamic asymptotic dimension}
\begin{thm}\label{mt2}
Let $X$ be a compact metric space, $T: X\to X$ a  surjective local homeomorphism. If $Sp_l(X,T)$ is finite and consists of isolated points in $X$, then
\[{\rm dad}(\mathcal{G}_{(X,T)})\le 2{\rm dim}_{\rm Rok}(X,T)+1.\]
\end{thm}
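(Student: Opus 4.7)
The plan is to observe that Theorem \ref{mt2} is a direct consequence of chaining together the three dimensional inequalities already established in Section \ref{sec:5}, together with Lemma \ref{td1} from Section \ref{sec:4}. No new technical work should be needed; the whole proof is a one-line concatenation.

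First, I would invoke Lemma \ref{fdad}, which tells us that under the hypothesis that $Sp_l(X,T)$ is finite and consists of isolated points of $X$, we have the inequality
\[{\rm dad}(\mathcal{G}_{(X,T)})\le {\rm dim}_{\rm am}(X,T).\]
Next, I would apply Lemma \ref{fad}, which under the same hypothesis on $Sp_l(X,T)$ gives
\[{\rm dim}_{\rm am}(X,T)\le {\rm dim}_{\rm tow}(X,T).\]
Finally, I would invoke Lemma \ref{td1}, which holds for every local homeomorphism $T$ on a compact metric space without any additional assumption, and yields
\[{\rm dim}_{\rm tow}(X,T)\le 2{\rm dim}_{\rm Rok}(X,T)+1.\]
Stringing these three inequalities together produces the desired bound
\[{\rm dad}(\mathcal{G}_{(X,T)})\le 2{\rm dim}_{\rm Rok}(X,T)+1,\]
which completes the proof.

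The only thing worth noting is that the hypothesis of the theorem (finiteness of $Sp_l(X,T)$ together with each of its points being isolated) is exactly what is needed to apply both Lemma \ref{fad} and Lemma \ref{fdad}; Lemma \ref{td1} is unconditional, so no further hypothesis verification is required. Since all the serious work has been done in the preceding subsections, there is no substantive obstacle here: the theorem is a clean summary statement. If ${\rm dim}_{\rm Rok}(X,T)=\infty$ the bound is vacuous, so we implicitly use that the chain of inequalities is also valid in that case.
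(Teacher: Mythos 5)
Your proof is correct and matches the paper's own argument: both simply chain Lemma \ref{fdad}, Lemma \ref{fad}, and Lemma \ref{td1} (the paper assembles the inequalities in the reverse order, but that is an inessential difference). Your remark about the hypotheses being precisely what is needed for Lemmas \ref{fad} and \ref{fdad}, and about the vacuous case ${\rm dim}_{\rm Rok}(X,T)=\infty$, mirrors the paper's opening reduction.
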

\begin{proof}
We may assume ${\rm dim}_{\rm Rok}(X,T)<\infty$, or else there will be nothing left to proof. By Lemma \ref{td1}(or Corollary \ref{cor}), $(X,T)$ has finite tower dimension bounded by
\[{\rm dim}_{\rm tow}(X,T)\le 2{\rm dim}_{\rm Rok}(X,T)+1.\]
Further applying Lemma \ref{fad}, one sees that $(X,T)$ has finite amenability dimension estimated by
\[{\rm dim}_{\rm am}(X,T)\le {\rm dim}_{\rm tow}(X,T)\le 2{\rm dim}_{\rm Rok}(X,T)+1.\]
Finally, according to Lemma \ref{fdad}, $\mathcal{G}_{(X,T)}$ has finite dynamic asymptotic dimension with
\[{\rm dad}(\mathcal{G}_{(X,T)})\le {\rm dim}_{\rm am}(X,T)\le 2{\rm dim}_{\rm Rok}(X,T)+1.\]
The Theorem then follows.
\end{proof}

\begin{cor}\label{mt}
Let $X$ be a zero-dimensional compact metric space, $T: X\to X$ an aperiodic surjective local homeomorphism.

If $Sp_l(X,T)$ is finite and consists of isolated points in $X$, then
\[{\rm dad}(\mathcal{G}_{(X,T)})\le 4|Sp_l(X,T)|+3.\]
\end{cor}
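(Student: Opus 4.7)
The plan is to deduce this corollary by chaining together the two main finiteness inequalities established earlier in the paper, namely Theorem \ref{zdr} and Theorem \ref{mt2}. Both theorems are already in hand, and the hypotheses of Corollary \ref{mt} have been tailored precisely to trigger both at once: the zero-dimensionality of $X$, together with aperiodicity of $T$ and finiteness of $Sp_l(X,T)$, feed into the Rokhlin-dimension estimate of Theorem \ref{zdr}, while the additional requirement that every element of $Sp_l(X,T)$ is isolated is exactly what is needed to apply Theorem \ref{mt2} (which in turn was itself a chain through Lemma \ref{td1}, Lemma \ref{fad}, and Lemma \ref{fdad}).

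So the only steps to carry out are the following. First, invoke Theorem \ref{zdr} under the hypothesis that $X$ is zero-dimensional, $T$ is an aperiodic surjective local homeomorphism, and $|Sp_l(X,T)|<\infty$, to obtain
\[
{\rm dim}_{\rm Rok}(X,T)\le 2|Sp_l(X,T)|+1.
\]
Second, invoke Theorem \ref{mt2}, whose hypotheses (finite $Sp_l(X,T)$ consisting of isolated points, surjective local homeomorphism on a compact metric space) are satisfied here, to obtain
\[
{\rm dad}(\mathcal{G}_{(X,T)})\le 2\,{\rm dim}_{\rm Rok}(X,T)+1.
\]
Composing these two inequalities immediately yields
\[
{\rm dad}(\mathcal{G}_{(X,T)})\le 2\bigl(2|Sp_l(X,T)|+1\bigr)+1=4|Sp_l(X,T)|+3,
\]
which is the desired bound.

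There is effectively no obstacle here, since all of the technical work (the general position argument of Theorem \ref{lm4}, the Rokhlin tower construction of Lemma \ref{lm7}, the amenability-dimension bound using partitions of unity subordinate to a tower cover, and the Lemma \ref{4.1}-based passage from amenability dimension to dynamic asymptotic dimension) has been packaged into Theorem \ref{zdr} and Theorem \ref{mt2}. The only check worth stating explicitly in the write-up is that the two hypothesis sets align, so that both theorems are indeed legitimately applicable; once this is noted, the arithmetic $2(2|Sp_l(X,T)|+1)+1=4|Sp_l(X,T)|+3$ finishes the proof in one line.
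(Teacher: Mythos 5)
Your proof is correct and follows exactly the same route as the paper's own proof: apply Theorem \ref{zdr} to get $\dim_{\rm Rok}(X,T)\le 2|Sp_l(X,T)|+1$, then feed this into Theorem \ref{mt2} and do the arithmetic. The hypothesis check you flag is accurate, and nothing more is needed.
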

\begin{proof}
By Theorem \ref{zdr}, $(X,T)$ has finite Rokhlin dimension with 
\[{\rm dim}_{\rm Rok}(X,T)\le 2|Sp_l(X,T)|+1.\]
Then combining with Theorem \ref{mt2} yields the conclusion.
\end{proof}

\begin{cor}
Let $X$ be a one-sided shift space with $\sigma(X)=X$. If $X$ is aperiodic and has nonsuperlinear-growth complexity, then
\[{\rm dad}(\mathcal{G}_{(\tildeX,\sigma_{\tildeX})})\le 4\lceil 2d\rceil+3,\]
where $d=\liminf_{n\to\infty}p_X(n)/n$.
\end{cor}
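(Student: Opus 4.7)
The plan is to simply chain together the three results already established: Theorem \ref{shift}, Corollary \ref{mt}, and Lemma \ref{useful}. There is no new technical content to develop; the work has been done upstream. Concretely, I would proceed in three short steps.

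First, I would apply Theorem \ref{shift} to the aperiodic subshift $X$ (noting that nonsuperlinear-growth complexity forces $|Sp_l(X,\sigma)|<\infty$ by Lemma \ref{useful}, so the hypothesis of Theorem \ref{shift} is satisfied). This yields that the cover system $(\tildeX,\sigma_{\tildeX})$ is a zero-dimensional compact metric system, $\sigma_{\tildeX}$ is an aperiodic surjective local homeomorphism, $|Sp_l(\tildeX,\sigma_{\tildeX})|=|Sp_l(X,\sigma)|<\infty$, and every point of $Sp_l(\tildeX,\sigma_{\tildeX})$ is isolated in $\tildeX$.

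Second, with these properties in hand I would invoke Corollary \ref{mt} applied to $(\tildeX,\sigma_{\tildeX})$ to conclude
\[
{\rm dad}(\mathcal{G}_{(\tildeX,\sigma_{\tildeX})})\le 4|Sp_l(\tildeX,\sigma_{\tildeX})|+3.
\]
Finally, I would substitute $|Sp_l(\tildeX,\sigma_{\tildeX})|=|Sp_l(X,\sigma)|$ from Theorem \ref{shift} and then apply the bound $|Sp_l(X,\sigma)|\le \lceil 2d\rceil$ of Lemma \ref{useful}, giving the desired inequality
\[
{\rm dad}(\mathcal{G}_{(\tildeX,\sigma_{\tildeX})})\le 4\lceil 2d\rceil+3.
\]

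Since every ingredient is a citation to an earlier numbered result, there is no real obstacle; the only thing to be mildly careful about is verifying that all the hypotheses of Theorem \ref{shift} and Corollary \ref{mt} actually hold in the present setting (finiteness of the left special set of $X$, aperiodicity passing to the cover via the factor map $\pi$, and isolation of points of $Sp_l(\tildeX,\sigma_{\tildeX})$ in $\tildeX$), all of which are given or immediate. The proof is therefore a one-sentence citation-chain argument.
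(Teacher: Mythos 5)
Your proposal is correct and follows the paper's own proof exactly: the paper cites Theorem \ref{shift}, Lemma \ref{useful}, and Corollary \ref{mt} in the same chain you spell out. Your version is just the fully-expanded form of that citation chain, with the hypothesis-checking you note (finiteness and isolation in the cover) supplied by Theorem \ref{shift} precisely as intended.
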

\begin{proof}
This follows from Theorem \ref{shift}, Lemma \ref{useful} and Corollary \ref{mt}.
\end{proof}

\vspace{1cm}

\noindent Sihan Wei, School of Mathematics and Statistics, university of Glasgow, Glasgow, UK

{\em Email address}: {\bf sihan.wei@glasgow.ac.uk}
\quad\par
\quad\par
\noindent Zhuofeng He, BIMSA, Beijing, China

{\em Email address}: {\bf zhuofenghe@bimsa.cn}

\end{document}